\newcommand{\R}{\mathbb{R}}
\newcommand{\N}{\mathbb{N}}
\newcommand{\Z}{{\mathbb Z}}
\renewcommand{\d}{\mathrm{d}}
\renewcommand{\phi}{\varphi}
\theoremstyle{plain}
\newtheorem*{theorem*}{Theorem}
    \newtheorem{theorem}{Theorem}[section]
    \newtheorem{lemma}[theorem]{Lemma}
    \newtheorem{proposition}[theorem]{Proposition}
\theoremstyle{definition}
    \newtheorem{definition}[theorem]{Definition}
    \newtheorem{example}[theorem]{Example}
    \newtheorem{remark}[theorem]{Remark}
\theoremstyle{remark}
\DeclareMathOperator{\id}{id}
\DeclareMathOperator{\homeo}{Homeo}
\DeclareMathOperator{\diff}{Diff}
\title[Minimal homeomorphisms and $K$-theory]{Minimal homeomorphisms and topological $K$-theory}
\author[R.J. Deeley, I.F. Putnam, K.R. Strung]
{Robin J. Deeley \and
Ian F. Putnam \and
Karen R. Strung}
\address{R. J. Deeley, Department of Mathematics,
University of Colorado Boulder,
Campus Box 395 2300 Colorado Avenue,
Boulder, CO 80309-0395, USA }
\email{robin.deeley@gmail.com}
\address{I. F. Putnam, Department of Mathematics and Statistics,
University of Victoria,
Victoria, B.C., Canada V8W 3R4} 
\email{ifputnam@uvic.ca}
\address{K. R. Strung, Institute of Mathematics, Czech Academy of Sciences, \v{Z}itn\'a 25, 115 67 Prague, Czech Republic}
\email{strung@math.cas.cz}
\date{\today}
\subjclass[2010]{37B05, 19L99}
\keywords{minimal homeomorphisms, $K$-theory, classification of nuclear \mbox{$\mathrm{C}^{*}$-algebras}}
\thanks{RJD is currently funded by NSF Grant DMS 2000057 and was previously funded by Simons Foundation Collaboration Grant for Mathematicians number 638449. KRS is currently funded by GA\v{C}R project 20-17488Y and \mbox{RVO: 67985840} and part of this work was carried out while funded by Sonata 9 NCN grant 2015/17/D/ST1/02529 and a Radboud Excellence Initiative Postdoctoral Fellowship. IFP is supported in part by an NSERC Discovery Grant.}
\begin{document}

\begin{abstract} 
The Lefschetz fixed point theorem provides a powerful obstruction to the existence of minimal homeomorphisms on well-behaved spaces such as finite CW-complexes. We show that these obstructions do not hold for more general spaces. Minimal homeomorphisms are constructed on compact connected metric spaces with any prescribed finitely generated $K$-theory or cohomology. In particular, although a non-zero Euler characteristic obstructs the existence of a minimal homeomorphism on a finite CW-complex, this is not the case on a compact metric space. We also allow for some control of the map on $K$-theory and cohomology induced from these minimal homeomorphisms. This allows for the construction of many minimal homeomorphisms that are not homotopic to the identity. Applications to $C^*$-algebras will be discussed in another paper.    
\end{abstract}
\maketitle

\setcounter{section}{-1}

\section{Introduction}
Let $X$ be an infinite compact metric space. A homeomorphism $\varphi: X \rightarrow X$ is called \emph{minimal} if it has the following property: if $F\subseteq X$ is a closed non-empty set such that $\varphi(F) = F$, then $F=X$. A fundamental question in the theory of dynamical systems is the following: \\

{\bf Question:} Given $X$, does there exist a minimal homeomorphism $\varphi : X\rightarrow X$? \\

Examples where the answer is positive include the Cantor set, the circle, the torus, the Klein bottle, any odd dimensional sphere, among others. After proving the existence of a minimal homeomorphism on a particular space, one would like to classify all minimal homeomorphisms on that space up to various natural equivalences. 

However, proving existence is typically non-trivial and there are in fact many obstructions to the existence of a minimal homeomorphism on well-behaved spaces. For example, a compact manifold with non-empty boundary cannot admit a minimal homeomorphism. More subtly, if we are given a finite CW-complex with non-zero Euler characteristic, then by a result of Fuller \cite[Theorem 2]{Ful:PerPts} any homeomorphism on it has a periodic point. Since we are considering infinite metric spaces, a homeomorphism with periodic points can never be minimal.

Based on Fuller's result, it is natural to ask if the Euler characteristic is still an obstruction to the existence of a minimal homeomorphism when considering a general compact metric space, rather than a finite CW-complex. This is not the case. In fact, it follows from our results here that given an integer $n$ there exists an infinite compact metric space with Euler characteristic $n$ that admits a minimal homeomorphism. 

In fact, what we prove is more general and one of our main motivations is given by the prominent role minimal homeomorphisms play in the theory of $C^*$-algebras via the crossed product construction. In particular, $K$-theory is an important invariant for both spaces and $C^*$-algebras. For a compact metric space $X$, its topological $K$-theory is a $\Z/2\Z$-graded abelian group denoted by $K^*(X)$. Topological $K$-theory is a functor, so any continuous map $\varphi: X \rightarrow Y$ induces a group homomorphism $K^*(Y) \rightarrow K^*(X)$, which we denote by $\varphi^*$. Furthermore, the $K^0$-group of a compact space has the form $K^0(X)\cong \Z \oplus \tilde{K}^0(X)$ where $\tilde{K}^0(X)$ is reduced $K$-theory, and if $X$ is connected then the group homomorphism $\varphi^*$ acts as the identity on the copy of $\Z$.

If $(X, \varphi)$ is a minimal dynamical system, then, from the $C^*$-algebra perspective, computing $K^*(X)$ and $\varphi^*: K^*(X) \rightarrow K^*(X)$ are  fundamental problems, as they are necessary inputs for calculating the $C^*$-algebra $K$-theory of the associated crossed product $C^*$-algebra, $C(X) \rtimes_{\varphi} \mathbb{Z}$. With our assumptions on $X$, such a \mbox{$C^*$-algebra} is simple, separable, unital and nuclear and an important open problem is determining the $K$-theoretic range of such crossed products. For more about the $C^*$-algebraic applications, see \cite{DPS:minDSKth}.

As many spaces do not admit minimal homeomorphisms, we instead tackle the question from the perspective of $K$-theory and cohomology. That is, rather than trying to determine whether a given space admits a homeomorphism, we fix $K$-theory (or cohomology) and then seek spaces admitting minimal homeomorphisms which realize the prescribed $K$-theory (or cohomology). Our results begin with the precise formulation of this question: 
 
{\bf Question A:} Given countable abelian groups $G_0$ and $G_1$, does there exist an infinite connected compact metric space $X$ such that
\begin{enumerate}
\item $X$ admits a minimal homeomorphism,
\item $K^0(X) \cong \Z \oplus G_0$ and $K^1(X) \cong G_1$?
\end{enumerate}  

Our main existence result is that there is such a space when the groups are finitely generated, see Theorem \ref{Thm:ExistenceOfMinHomeo} for further details. The proof of this result uses an important result of Glasner and Weiss \cite[Theorem 1]{GlaWei:MinSkePro} (see Theorem \ref{GlasnerWeissToZ}  for the special case of this result that we use), our previous work in \cite{DPS:DynZ}, and some Hilbert cube manifold theory. Our solution to this special case of Question A also leads to a positive answer to the analogous question for the existence of minimal homeomorphisms on spaces with prescribed finitely generated \v{C}ech cohomology.

With the existence of a minimal homeomorphism proved for spaces with prescribed finitely generated $K$-theory, we move to understanding the collection of minimal homeomorphisms on these spaces. To do so, we studying the collection of minimal homeomorphisms on a particular space via the maps on $K$-theory and \v{C}ech cohomology they induce. It is important to note that our results in this direction apply not only to the spaces we construct but also to more well-behaved spaces such as manifolds. The starting point is the following result of Fathi--Herman \cite[Th\'eor\`eme 1]{FatHer:Diffeo}:
\begin{theorem}
Suppose that $M$ is a smooth closed connected manifold that admits a free smooth $S^1$-action. Then $M$ admits a minimal homeomorphism that is homotopic to the identity.
\end{theorem}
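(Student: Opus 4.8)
The plan is to use the \emph{approximation by conjugation} technique of Anosov and Katok. Since the $S^1$-action is smooth and free, the quotient map $\pi\colon M\to B:=M/S^1$ is a smooth principal $S^1$-bundle over a closed connected manifold $B$. Write $R_t\in\diff(M)$ for the action of $t\in S^1=\R/\Z$, so $R_0=\id$ and $\{R_t\}$ is a loop of diffeomorphisms. For any $g\in\diff(M)$ and any $\alpha$, the path $t\mapsto gR_{t\alpha}g^{-1}$ ($t\in[0,1]$) is an isotopy from $\id$ to $gR_\alpha g^{-1}$, so every conjugate of every $R_\alpha$ is isotopic to the identity; since on a compact manifold sufficiently $C^0$-close self-maps are homotopic, any uniform limit of such conjugates is homotopic to the identity. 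So it suffices to produce a minimal homeomorphism of $M$ arising as a uniform limit of conjugates of the $R_\alpha$'s.

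First I would set up the iteration: choose inductively rationals $\alpha_n=p_n/q_n$ in lowest terms with $q_n\uparrow\infty$, diffeomorphisms $g_n\in\diff(M)$ commuting with $R_{\alpha_n}$, and set $H_n:=g_1\circ\cdots\circ g_n$, $T_n:=H_nR_{\alpha_{n+1}}H_n^{-1}$. The commutation is exactly what makes the scheme converge: it gives $H_nR_{\alpha_n}H_n^{-1}=H_{n-1}R_{\alpha_n}H_{n-1}^{-1}=T_{n-1}$, so $T_n$ and $T_{n-1}$ are conjugates by the \emph{same} $H_n$ of the rotations $R_{\alpha_{n+1}}$ and $R_{\alpha_n}$; hence $d_{C^0}(T_n,T_{n-1})$, $d_{C^0}(T_n^{-1},T_{n-1}^{-1})$, and $\sup_{0\le j<q_n}d_{C^0}(T_n^{j},T_{n-1}^{j})$ are all at most $C_n|\alpha_{n+1}-\alpha_n|$ for a constant $C_n$ depending only on the data chosen through stage $n$. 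At stage $n$, once $g_1,\dots,g_n$ are fixed, we may therefore take $\alpha_{n+1}$ so close to $\alpha_n$ that these quantities are $<2^{-n}$ and also do not disturb the finitely many analogous requirements from earlier stages. Standard estimates then give that $\varphi:=\lim_nT_n$ exists, is a homeomorphism with inverse $\lim_nT_n^{-1}$, and — being $C^0$-close to the identity-isotopic maps $T_n$ for $n$ large — is homotopic to the identity.

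The real content is choosing the $q_n$ and $g_n$ so that $\varphi$ is minimal. Fix a countable base $\{U_k\}$ of non-empty open sets and a sequence $\epsilon_n\downarrow0$. Since $\gcd(p_n,q_n)=1$, the map $R_{\alpha_n}$ has period $q_n$ and its orbit through a point is a set of $q_n$ points equally spaced along that point's $S^1$-orbit; hence $T_{n-1}=H_{n-1}R_{\alpha_n}H_{n-1}^{-1}$ has period $q_n$ and its orbit through $x$ is the $H_{n-1}$-image of such an equidistributed $q_n$-point subset of the fibre of $\pi$ through $H_{n-1}^{-1}(x)$. I would arrange that \emph{every $T_{n-1}$-orbit is $\epsilon_n$-dense in $M$}, which follows once $H_{n-1}$ carries every fibre of $\pi$ to a $\tfrac12\epsilon_n$-dense subset of $M$ and $q_n$ is large relative to $\mathrm{Lip}(H_{n-1})$. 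To get the fibre condition, write $H_{n-1}=H_{n-2}\circ g_{n-1}$ and require $g_{n-1}$ to send each fibre to a subset so densely distributed in $M$ that its $H_{n-2}$-image is still $\tfrac12\epsilon_n$-dense; this is possible because $g_{n-1}$, commuting with $R_{\alpha_{n-1}}$, has in any local trivialization $U\times S^1$ of $\pi$ the form $(b,s)\mapsto(\beta(b,s),\,s+\sigma(b,s))$ with $\beta,\sigma$ of period $\tfrac1{q_{n-1}}$ in $s$, so for $q_{n-1}$ large one can let the curves $s\mapsto\beta(b,s)$ sweep through $U$ as densely as desired. Carrying out this construction globally (as a genuine diffeomorphism, compatibly over a finite atlas of $B$) and bookkeeping the many ``$q_n$ large enough'' and ``$|\alpha_{n+1}-\alpha_n|$ small enough'' requirements so that they are jointly satisfiable is the main technical obstacle; this is precisely the Anosov--Katok scheme as carried out by Fathi and Herman.

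Granting this, minimality follows quickly. Given $k$, pick a ball $B(z,2\rho)\subseteq U_k$ and $n$ so large that $\epsilon_n<\rho$ and $\sup_{0\le j<q_n}d_{C^0}(\varphi^{j},T_{n-1}^{j})<\rho$ (the latter holds once the increments $|\alpha_{m+1}-\alpha_m|$, $m\ge n$, were taken small enough relative to the data already fixed, exactly as in the convergence estimate above). For any $x\in M$ the $\epsilon_n$-dense set $\{T_{n-1}^{j}(x):0\le j<q_n\}$ contains a point within $\epsilon_n$ of $z$, say $T_{n-1}^{j}(x)$; then $d(\varphi^{j}(x),z)<\epsilon_n+\rho<2\rho$, so $\varphi^{j}(x)\in U_k$. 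Thus every $\varphi$-orbit is dense, i.e.\ $\varphi$ is minimal, and we already saw $\varphi$ is homotopic to the identity. (With the extra care of controlling every $C^r$-norm of $H_n$ when choosing $\alpha_{n+1}$, the same $\varphi$ is a $C^\infty$ diffeomorphism, which is the theorem as actually stated by Fathi and Herman; a homeomorphism is all that is needed here.)
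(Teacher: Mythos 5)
Your route is genuinely different from the paper's. The paper obtains this statement as the $\beta=\id$ case of Theorem \ref{mainTheoremManifoldCase}, proved by a Baire-category argument in $\overline{\mathcal{O}^{\infty}(S^1,\beta)}$: the sets $E_U$ are open, their density requires only the conjugation lemmas quoted from Fathi--Herman (Lemmas \ref{FathiHermanLemma1} and \ref{FathiHermanLemma2}: for a \emph{single} open set $U$ there is a conjugator $H$, commuting with $R_{p/q}$, with $H^{-1}(U)$ meeting every orbit), and the homotopy-to-the-identity claim comes from local contractibility of the relevant transformation group, as in Theorem \ref{homotopyLemmaNonMfldCase}. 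You instead run the explicit Anosov--Katok approximation-by-conjugation induction, and the soft parts of your argument are fine: the telescoping estimate coming from $g_n\circ R_{\alpha_n}=R_{\alpha_n}\circ g_n$, the convergence of $T_n$ and $T_n^{-1}$, the control of $d_{C^0}(\varphi^j,T_{n-1}^j)$ for $j<q_n$, the deduction of minimality from $\epsilon_n$-dense periodic orbits, and the homotopy claim via isotopies $t\mapsto gR_{t\alpha}g^{-1}$ and $C^0$-closeness.

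The gap is the step you yourself flag as ``the main technical obstacle'': the existence, at each stage, of a diffeomorphism commuting with $R_{p_{n-1}/q_{n-1}}$ which (after composing with the fixed $H_{n-2}$) maps \emph{every} $S^1$-fibre onto an $\eta$-dense subset of all of $M$. Your local-trivialization sketch does not establish this: making the curves $s\mapsto\beta(b,s)$ sweep densely through a chart $U\times S^1$ (and tacitly assuming the image stays in that chart) is far from making each fibre image dense in $M$, and the appeal to Fathi--Herman does not cover it either -- their Corollaires 4.11/4.12 give only the single-open-set statement above, and the whole point of their (and the paper's) Baire-category formulation is to avoid ever producing one conjugacy that makes every orbit simultaneously dense at a finite stage. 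As written, your scheme needs the strong ``simultaneous for an entire $\eta$-net of balls, equivariantly, globally on an arbitrary principal $S^1$-bundle'' version, which is precisely the hard geometric content and is missing. Two repairs are possible: prove that stronger lemma, or weaken the inductive requirement to ``every $T_{n-1}$-orbit meets a prescribed ball $B$ with $\overline{B}\subset U_{k_n}$,'' letting each basis element recur infinitely often; the weaker requirement does follow from Lemma \ref{FathiHermanLemma2} together with compactness (each orbit meets the open set $H^{-1}(B)$ in an arc of uniformly bounded-below length, so the $q_n$ equally spaced points hit it once $q_n$ is large), and your bookkeeping then yields minimality -- but at that point your construction is essentially a constructive repackaging of the paper's density-of-$E_U$ argument.
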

This result gives a powerful way to obtain existence results. However, any minimal homeomorphism obtained via this theorem induces the identity map on $K$-theory and cohomology because it is homotopic to the identity. We extend their result to allow for more general induced maps.  A special case (see Remark \ref{RemMainTheoremNonManifoldCase}) is the following:
\begin{theorem}[Special case of Theorem \ref{MainTheoremNonManifoldCase}]
Suppose $M$ is a smooth closed connected manifold that admits a free smooth $S^1$-action, $Y$ is a closed connected manifold and $\beta: Y \rightarrow Y$ is a finite order homeomorphism. Then there exists a minimal homeomorphism on $M \times Y$ that is homotopic to $\id_M \times \beta$.
\end{theorem}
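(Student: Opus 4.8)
The plan is to obtain this as the special case of Theorem~\ref{MainTheoremNonManifoldCase} recorded in Remark~\ref{RemMainTheoremNonManifoldCase}; the one thing that needs arranging is that one is in a \emph{free} situation, since $\id_M \times \beta$ itself need not act freely on $M \times Y$ (it fixes $M \times \{y\}$ for every fixed point $y$ of $\beta$), whereas freeness is what the circle‑action machinery exploits. So first replace $\beta$ by a homotopic free twist: let $n$ be the order of $\beta$, let $\zeta = e^{2\pi i/n} \in S^1$, and let $R_\zeta \colon M \to M$ be $x \mapsto \zeta \cdot x$. Because the $S^1$-action is free, $R_\zeta^{\,k} = R_{\zeta^k}$ is fixed‑point free for $0 < k < n$; because $S^1$ is connected, $R_\zeta$ is isotopic to $\id_M$ through $t \mapsto R_{e^{2\pi i t/n}}$. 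Hence
\[ \tilde\beta := R_\zeta \times \beta \colon M \times Y \to M \times Y \]
is homotopic to $\id_M \times \beta$, satisfies $\tilde\beta^{\,n} = \id$, generates a \emph{free} $\Z/n\Z$-action on $M \times Y$ (freeness being inherited from the first factor), and commutes with the free $S^1$-action on $M \times Y$ coming from the first factor. It therefore suffices to produce a minimal homeomorphism of $M \times Y$ homotopic to $\tilde\beta$, which is exactly the kind of input Theorem~\ref{MainTheoremNonManifoldCase} accepts.

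To see why that theorem (equivalently, the construction behind it) applies here, pass to the quotient $W := (M \times Y)/\langle\tilde\beta\rangle$. Since $\Z/n\Z$ acts freely by homeomorphisms, $W$ is an infinite, connected, compact metric space — in fact a closed topological manifold — and $p \colon M \times Y \to W$ is an $n$-fold regular covering with deck group $\langle\tilde\beta\rangle$. As $\tilde\beta$ commutes with the $S^1$-action, that action descends to a free \emph{smooth} $S^1$-action on the smooth manifold $M/\langle R_\zeta\rangle$, over which $W$ is a flat $Y$-bundle with monodromy $\beta$. One is then in position to run the Fathi--Herman-type construction on $W$ — using a generic continuous $S^1$-valued cocycle, Theorem~\ref{GlasnerWeissToZ}, and the techniques of \cite{DPS:DynZ} — to produce a minimal homeomorphism $\psi$ of $W$ homotopic to $\id_W$, and then to lift $\psi$ along $p$: the $n$ lifts are homotopic to $\tilde\beta^{\,0}, \dots, \tilde\beta^{\,n-1}$ respectively, so one of them, say $\Phi$, is homotopic to $\tilde\beta$, and unwinding the twist gives a minimal homeomorphism of $M \times Y$ homotopic to $\id_M \times \beta$, as required.

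The main obstacle, and the reason this is not a one‑line corollary, lies in the last two steps: a lift of a minimal homeomorphism along a connected finite covering need not be minimal (a $\Phi$-invariant closed set $F$ projects onto $W$, i.e.\ $\bigcup_{j} \tilde\beta^{\,j}(F) = M \times Y$, but need not equal $M \times Y$), so $\psi$ cannot be chosen arbitrarily and then lifted — one must build $\psi$ and $\Phi$ simultaneously. Concretely, one realizes $M \times Y$ as the total space of an $S^1$-bundle over a base that already carries a minimal homeomorphism, with $\tilde\beta$ (up to homotopy) an equivariant lift of that homeomorphism, and then perturbs inside the zero homotopy class by a generic continuous $S^1$-valued cocycle; Theorem~\ref{GlasnerWeissToZ} produces a dense $G_\delta$ of cocycles for which the resulting skew product on $M \times Y$ is minimal, and since the homotopy classes of $S^1$-valued maps are clopen, this $G_\delta$ meets the zero class, yielding $\Phi$ directly. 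Arranging that ``a base with a minimal homeomorphism'' and ``$\tilde\beta$ is an equivariant lift of it'' can be made to coexist — which is where \cite{DPS:DynZ} and the real content of Theorem~\ref{MainTheoremNonManifoldCase} enter — is the step that requires genuine work; everything preceding it is bookkeeping with the $S^1$-action.
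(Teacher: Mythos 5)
Your opening reduction rests on a misreading of what the machinery needs: Theorem \ref{MainTheoremNonManifoldCase} asks only that the $S^1$-action on the \emph{first} factor $M$ be free; it imposes no freeness whatsoever on $\beta$, which acts on the fibre $Y$ and need only be a finite order homeomorphism. So replacing $\id_M\times\beta$ by the free twist $R_\zeta\times\beta$ and passing to the quotient $(M\times Y)/\langle\tilde\beta\rangle$ is unnecessary, and it is exactly this detour that derails the argument. The paper's proof of the stated special case is short and does not leave the product: take $\Gamma$ to be the path component of the identity in $\homeo(Y)$, which acts minimally because $Y$ is a closed connected manifold (Remark \ref{RemMainTheoremNonManifoldCase}); Theorem \ref{MainTheoremNonManifoldCase} (an Anosov--Katok/Fathi--Herman conjugation argument carried out by Baire category in the complete metric space $\mathcal{A}$, not a skew product over a minimal base) gives a dense $G_\delta$ of uniquely ergodic minimal elements of $\overline{\mathcal{S}(S^1,\beta)}$; and since $\id_M\times\beta=R_0\times\beta$ lies in $\overline{\mathcal{S}(S^1,\beta)}$ and $\homeo(M\times Y)$ is locally contractible ($M\times Y$ being a closed manifold), Theorem \ref{homotopyLemmaNonMfldCase} produces a minimal element inside a neighbourhood of $\id_M\times\beta$ all of whose members are homotopic to $\id_M\times\beta$.

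The construction you substitute for ``the real content'' has a genuine, structural gap, not just missing detail. First, a base ``that already carries a minimal homeomorphism'' need not exist: realizing $M\times Y$ as an $S^1$-bundle over $(M/S^1)\times Y$ and taking $M=S^1$, $Y=S^2$ gives base $S^2$, which admits no minimal homeomorphism at all (Lefschetz), even though the theorem's conclusion does hold for $S^1\times S^2$. Second, any perturbation of $\tilde\beta=R_\zeta\times\beta$ by an $S^1$-valued cocycle in the fibre direction still factors onto the base map $\id_{M/S^1}\times\beta$, which is periodic and hence not minimal; since every factor of a minimal system is minimal, no such fibrewise perturbation can be minimal, so ``perturbing inside the zero homotopy class by a generic cocycle'' cannot succeed. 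Third, Theorem \ref{GlasnerWeissToZ} is misapplied: it concerns conjugates of $\zeta\times\id_Y$ over the point-like system $Z$, not generic $S^1$-valued cocycles over an arbitrary base. Your covering-space lift is, as you yourself note, invalid (lifts of minimal maps along finite covers need not be minimal), so after discarding it nothing in the proposal actually produces the minimal homeomorphism: the step you defer as ``requiring genuine work'' is precisely the content of Theorem \ref{MainTheoremNonManifoldCase}, and the route you sketch does not recover it.
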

In particular, if $\beta$ in the statement of this theorem acts non-trivially on $K$-theory or \v{C}ech cohomology then the minimal homeomorphism also acts non-trivially and hence is \emph{not} homotopic to identity. In addition, if both $Y$ and $\beta$ are smooth then the resulting minimal homeomorphism can also be taken to be a diffeomorphism, see Theorem \ref{mainTheoremManifoldCase} and Example \ref{manifoldGeneralProductExample}. The assumption that $Y$ is a closed connected manifold can be weakened, see Theorem \ref{MainTheoremNonManifoldCase}. This is important as the spaces we construct to answer Question A are not manifolds (and by Fuller's result cannot be manifolds or even CW-complexes).

By including the action on the given abelian groups into Question A, we have the following:  

{\bf Question B:} Given countable abelian groups $G_0$ and $G_1$ and group automorphisms $\sigma_0: G_0 \rightarrow G_0$ and $\sigma_1: G_1 \rightarrow G_1$ does there exist an infinite compact metric space $X$ such that
\begin{enumerate}
\item $X$ admits a minimal homeomorphism $\alpha$,
\item $K^0(X) \cong \Z \oplus G_0$ and $K^1(X) \cong G_1$, 
\item $\alpha^*= \id \oplus \sigma_0$ on $K^0(X)$ and $\alpha^*=\sigma_1$ on $K^1(X)$?
\end{enumerate}  
The spaces constructed in our answer to Question A already allow for a positive answer to Question B when the groups are finitely generated and both $\sigma_0$ and $\sigma_1$ are the identity. However, we are also able to construct systems where the maps  $\sigma_0$ and $\sigma_1$ are not necessarily the identity, allowing even further progress on this question. A special case of our results gives the following (see Section \ref{PointLikeSpacesWithBeta} for details):
\begin{theorem}[see Theorem \ref{SummaryTheorem}]
Suppose that $Y$ is a connected finite CW-complex and $\beta: Y \rightarrow Y$ is a finite order homeomorphism. Then there exist a connected compact metric space $X$ and minimal homeomorphism $\alpha: X\rightarrow X$ such that
\begin{enumerate}
\item $K^*(X) \cong K^*(Y)$ and $\alpha^* = \beta^*$ on $K$-theory, 
\item $H^*(X) \cong H^*(Y)$ and $\alpha^* = \beta^*$ on \v{C}ech cohomology.
\end{enumerate}
\end{theorem}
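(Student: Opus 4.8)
The plan is to realize $X$ as a product $X = P \times Y$, where $P$ is an infinite, connected, compact metric space that is \emph{point-like} (that is, $\widetilde{K}^{*}(P) = 0$ and $\check H^{n}(P;\Z) = 0$ for all $n \ge 1$), and to take $\alpha$ to be a minimal homeomorphism of $X$ that is homotopic to $\id_{P}\times\beta$ (whence $\alpha^{*} = \beta^{*}$ under the identifications below). Concretely, I would present $P$ as an inverse limit $P = \varprojlim(P_{k}, f_{k})$ of smooth closed connected manifolds $P_{k}$, each carrying a free smooth $S^{1}$-action, with bonding maps $f_{k}$ chosen so that reduced $K$-theory and reduced \v{C}ech cohomology are killed in the colimit, $\varinjlim(\widetilde{K}^{*}(P_{k}), f_{k}^{*}) = 0$ and $\varinjlim(\widetilde{H}^{*}(P_{k}), f_{k}^{*}) = 0$; point-like spaces of this kind, together with minimal homeomorphisms on them, are produced in \cite{DPS:DynZ} via the Fathi--Herman technique and underlie Theorem~\ref{Thm:ExistenceOfMinHomeo}. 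Then $X = \varprojlim(P_{k}\times Y,\, f_{k}\times \id_{Y})$.

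\emph{Invariants.} Since topological $K$-theory and \v{C}ech cohomology are continuous, $K_{*}(C(X)) = \varinjlim K_{*}(C(P_{k}\times Y))$ and $\check H^{*}(X) = \varinjlim \check H^{*}(P_{k}\times Y)$, and since direct limits are exact, the K\"unneth theorem for topological $K$-theory (applied to $C(P_{k})$ and $C(Y)$, both in the bootstrap class, or classically to the finite CW-complexes $P_{k}$ and $Y$) together with the K\"unneth theorem for \v{C}ech cohomology gives natural isomorphisms
\[
K^{*}(X) \cong \big(\varinjlim K^{*}(P_{k})\big)\otimes K^{*}(Y) \cong K^{*}(Y) \quad\text{and}\quad \check H^{*}(X) \cong \check H^{*}(Y),
\]
because $\varinjlim K^{*}(P_{k}) \cong \Z$ and $\varinjlim \check H^{*}(P_{k}) \cong \Z$ are free and concentrated in degree $0$, so the Tor terms vanish. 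Under these identifications, any homeomorphism of the form $\id_{P}\times g$ induces $g^{*}$ on $K^{*}(Y)$ and on $\check H^{*}(Y)$, and, more to the point, a homeomorphism of $P_{k}\times Y$ homotopic to $\id_{P_{k}}\times g$ induces a map that, in the colimit, is $g^{*}$.

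\emph{The minimal homeomorphism.} Write $\beta^{m} = \id_{Y}$. For each fixed $k$, since $Y$ is a finite CW-complex and $P_{k}$ is a smooth closed connected manifold with a free smooth $S^{1}$-action, Theorem~\ref{MainTheoremNonManifoldCase} produces a minimal homeomorphism $\alpha_{k}$ of $P_{k}\times Y$ homotopic to $\id_{P_{k}}\times\beta$: one reparametrizes the $S^{1}$-flow on the $P_{k}$-factor (allowing the reparametrization to depend on the $Y$-coordinate), twists the resulting time-$\tfrac1m$ map by $\beta$, and perturbs by an arbitrarily small amount — the Fathi--Herman/Rokhlin-tower step, of the same flavour as Theorem~\ref{GlasnerWeissToZ} — so that every orbit becomes dense. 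The obstacle is that freely chosen $\alpha_{k}$ need not satisfy $(f_{k}\times\id_{Y})\circ\alpha_{k+1} = \alpha_{k}\circ(f_{k}\times\id_{Y})$. I would therefore build the tower $(P_{k}, f_{k})$ and the homeomorphisms $\alpha_{k}$ \emph{simultaneously} by induction, as in the untwisted construction of \cite{DPS:DynZ}: at stage $k+1$ one chooses $P_{k+1}$, the bonding map $f_{k}$, and the Fathi--Herman perturbation producing $\alpha_{k+1}$ so that $\alpha_{k+1}$ descends over $\alpha_{k}$ along $f_{k}\times\id_{Y}$, while maintaining $\varinjlim\widetilde{K}^{*}(P_{k}) = 0 = \varinjlim\widetilde{H}^{*}(P_{k})$, keeping $P = \varprojlim P_{k}$ infinite and connected, and keeping each $\alpha_{k}$ in the homotopy class of $\id_{P_{k}}\times\beta$. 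Setting $\alpha := \varprojlim\alpha_{k}$, this is a homeomorphism of $X$, and it is minimal: if $F\subseteq X$ is nonempty, closed and $\alpha$-invariant and $z\in F$, then for each $k$ the $\alpha_{k}$-orbit of the $k$-th coordinate of $z$ is dense in $P_{k}\times Y$, so the $\alpha$-orbit of $z$ meets every basic open set $\pi_{k}^{-1}(V)$ of $X$; hence $F = X$. Finally, by the naturality recorded above, $\alpha^{*} = \varinjlim\alpha_{k}^{*} = \varinjlim(\id_{P_{k}}\times\beta)^{*} = \beta^{*}$ on $K^{*}(X)\cong K^{*}(Y)$ and on $\check H^{*}(X)\cong \check H^{*}(Y)$, which is (1) and (2); one also obtains $\alpha$ homotopic to $\id_{P}\times\beta$ provided the finite-stage homotopies are chosen compatibly, which the structure of the construction permits.

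I expect the genuine difficulty to be the simultaneous inductive construction of the tower and the $\alpha_{k}$ in the third step: one must balance exact commutation of the $\alpha_{k}$ with the bonding maps (so that the inverse limit is a bona fide homeomorphism), preservation of point-like-ness in the limit, and retention of enough of the Fathi--Herman approximation at every level that the limit system — and not merely each finite approximation — is minimal, all while pinning the homotopy class at $\id_{P_{k}}\times\beta$. Everything else is a continuity/K\"unneth formality or a direct appeal to Theorem~\ref{MainTheoremNonManifoldCase} and \cite{DPS:DynZ}. (When $Y$ and $\beta$ happen to be smooth one expects, as in Theorem~\ref{mainTheoremManifoldCase}, that $\alpha$ can be taken to be a diffeomorphism of a suitable limit, but this is not needed here.)
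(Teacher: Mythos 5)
There is a genuine gap, and it sits exactly where you place the weight of the argument. First, your finite-stage step is not available: Theorem \ref{MainTheoremNonManifoldCase} requires a path-connected subgroup $\Gamma \subseteq \homeo(Y)$ acting minimally on $Y$, and an arbitrary connected finite CW-complex has no such $\Gamma$ (this is why the paper crosses with the Hilbert cube $Q$, so that $Y \times Q$ becomes a Hilbert cube manifold). In fact for $Y = S^1 \vee S^1$ the space $P_k \times Y$ admits \emph{no} minimal homeomorphism at all: the set of points where $P_k \times Y$ fails to be locally Euclidean is exactly $P_k \times \{\text{wedge point}\}$, a proper, nonempty, closed set invariant under every homeomorphism. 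So the claim ``Theorem \ref{MainTheoremNonManifoldCase} produces a minimal $\alpha_k$ on $P_k \times Y$ homotopic to $\id_{P_k}\times\beta$'' fails for general $Y$, and with it the whole tower. Second, the ``simultaneous induction'' you defer to is not a routine adaptation of \cite{DPS:DynZ}: there, $Z$ is \emph{not} built as an inverse limit of levelwise minimal manifold systems. It is obtained from a single minimal diffeomorphism $\varphi$ of an odd sphere by deleting a $\varphi$-invariant immersed line $L_\infty$ and completing; the inverse limit presentation is by compact \emph{contractible} spaces, which themselves admit no minimal dynamics, and the homeomorphism $\zeta$ is defined by extending $\varphi$ from the dense complement, not as $\varprojlim \alpha_k$. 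Moreover, the Fathi--Herman/Glasner--Weiss machinery only produces $\alpha_{k+1}$ as a Baire-generic limit of conjugates; you give no mechanism to force the exact intertwining $(f_k\times\id_Y)\circ\alpha_{k+1} = \alpha_k\circ(f_k\times\id_Y)$ along bonding maps that are simultaneously required to kill $\widetilde{K}^*$ and $\widetilde{H}^*$ in the colimit. That is not a technicality to be absorbed by ``as in \cite{DPS:DynZ}''; it is an unproved (and quite possibly false, as stated) construction.

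For comparison, the paper sidesteps the tower problem entirely. It applies the twisted skew-product theorem \emph{once}, to $M = S^d$ and $Y = W \times Q$ with $\beta_W \times \id_Q$, obtaining a minimal $\tilde\beta(s,w) = (\varphi(s), h_s(w))$ whose base diffeomorphism $\varphi$ is minimal by Theorem \ref{SmoothMap}. It then builds the single point-like system $(Z,\zeta)$ from $(S^d,\varphi)$ via \cite{DPS:DynZ} and \emph{lifts the fibre cocycle} through the almost one-to-one factor map $q : Z \to S^d$, setting $\tilde\zeta(z,w) = (\zeta(z), h_{q(z)}(w))$ on $X = Z \times W \times Q$. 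Minimality is proved by pushing an invariant set down through $\tilde q = q\times\id$ and using that $q$ is injective on the dense set $S^d\setminus L_\infty$ (Proposition \ref{MinMapConTwoInGeneral}); the identification $\alpha^* = \beta^*$ comes from the homotopy $\tilde\beta \simeq \id_{S^d}\times\beta\times\id_Q$ (Theorem \ref{homotopyLemmaNonMfldCase}, using local contractibility of the homeomorphism group of a Hilbert cube manifold) and a commuting diagram through $q^*$ and the projection to $W$ (Proposition \ref{SameMapOnKtheory}). Your continuity/K\"unneth computation of the invariants is fine and agrees in spirit with the paper, but the dynamical core of your proposal needs to be replaced by something like this lift-through-a-factor-map argument rather than an inverse limit of minimal systems.
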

In particular, this allows us to determine the $K$-theoretical range of the crossed product ${C}^*$-algebras associated to minimal dynamical systems $(X, \varphi)$, when $X$ has finitely generated $K$-theory, see \cite{DPS:minDSKth}. 

\subsection*{Acknowledgements} The authors thank the referee for a careful reading of the paper and many useful comments. In particular, we thank them for finding an error in the original formulation of Theorem \ref{homotopyLemmaNonMfldCase}.

\section{Background}

\subsection{Group actions and minimal homeomorphisms}
There will be a number of group actions considered in the present paper. Our main results centre on $\Z$-actions. A $\Z$-action on a compact metric space $X$ is obtained by iterates of a homeomorphism $\varphi : X \to X$. If the given homeomorphism has finite order then it is often more natural to consider the associated $\Z/n\Z$-action where $n$ is the order of the homeomorphism. We will also need to consider smooth actions of the circle on smooth closed manifolds. The reader should note that all actions of $S^1$ on smooth manifolds in the present paper are smooth actions.

\begin{definition} \label{defLocFree}
If $M$ is a closed manifold then a (smooth) $S^1$-action on $M$ is \emph{free} if the following condition holds: if there exists $\theta \in S^1$ such that $\theta \cdot m =m$ for some $m\in M$, then $\theta=\id_{S^1}$. A (smooth) $S^1$-action is \emph{locally free} if for each $m\in M$, the set
\[
\{ \theta \in S^1 \mid \theta \cdot m = m \}
\]
is finite.
\end{definition}
\begin{definition}
Let $X$ be a compact metric space, and $\Gamma$ a topological group. An action $\Gamma \to \homeo(X) : g \mapsto (x \mapsto g\cdot x)$ is called \emph{minimal} if, for every non-empty closed subset $F\subseteq X$ such that $\Gamma \cdot F = F$, then $F=X$. A single homeomorphism $\varphi : X \to X$ is minimal if the associated $\mathbb{Z}$ action is minimal.
\end{definition}

Note that $\varphi : X \to X$ is a minimal homeomorphism if and only if for any non-empty closed subset $F \subset X$ with $\varphi(F) = F$, then $F = X$. The next proposition is a standard result in the theory of minimal homeomorphisms.
\begin{proposition} \label{basicMinimalEquivalent}
Suppose that $X$ is compact and $\varphi: X\rightarrow X$ is homeomorphism. Then the following are equivalent:
\begin{enumerate}
\item $\varphi$ is minimal;
\item for each $x\in X$, $\{\ldots, \varphi^{-1}(x), x, \varphi(x), \varphi^2(x), \ldots \}$ is dense in $X$;
\item for each $x\in X$, $\{ x, \varphi(x), \varphi^2(x), \ldots \}$ is dense in $X$;
\item if $U \subseteq X$ is a non-empty open set, then there exists $k\in \N$ such that
\[
U \cup \varphi(U) \cup \ldots \cup \varphi^k(U) =X.
\]
\end{enumerate}
\end{proposition}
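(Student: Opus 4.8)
The plan is to prove the four statements equivalent by running the cycle $(1)\Rightarrow(2)\Rightarrow(3)\Rightarrow(4)\Rightarrow(1)$; throughout, the only ingredients are compactness of $X$ together with the facts that $\varphi$ is injective and commutes with topological closure. Three of the four implications are routine. For $(1)\Rightarrow(2)$: for any $x\in X$ the closure of the full orbit $\{\ldots,\varphi^{-1}(x),x,\varphi(x),\ldots\}$ is a non-empty closed set mapped onto itself by $\varphi$, hence all of $X$ by minimality. For $(3)\Rightarrow(4)$: fix a non-empty open $U\subseteq X$; by $(3)$ every point of $X$ has some forward iterate in $U$, so $\{\varphi^{-n}(U)\}_{n\ge 0}$ is an open cover of the compact space $X$, a finite subcover yields $X=\bigcup_{n=0}^{k}\varphi^{-n}(U)$ for some $k\in\N$, and applying the homeomorphism $\varphi^{k}$ and reindexing gives $X=U\cup\varphi(U)\cup\cdots\cup\varphi^{k}(U)$. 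For $(4)\Rightarrow(1)$: if $F\subseteq X$ were non-empty, closed, $\varphi$-invariant and proper, then $U:=X\setminus F$ would be non-empty and open with $\varphi^{j}(U)=U$ for every $j\ge 0$, so every finite union $U\cup\varphi(U)\cup\cdots\cup\varphi^{k}(U)$ equals $U\ne X$, contradicting $(4)$.

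The implication $(2)\Rightarrow(3)$ is the one requiring genuine care, and I expect it to be the main obstacle. Assume $(2)$ and suppose, towards a contradiction, that for some $x\in X$ the forward-orbit closure $Y:=\overline{\{x,\varphi(x),\varphi^{2}(x),\ldots\}}$ is a proper subset of $X$. Since $\varphi$ carries the forward orbit into itself, $\varphi(Y)\subseteq Y$, and hence $(\varphi^{n}(Y))_{n\ge 0}$ is a decreasing nested sequence of non-empty compact sets; set $Z:=\bigcap_{n\ge 0}\varphi^{n}(Y)$, which is non-empty and compact by the finite intersection property, and which satisfies $\varphi(Z)=\bigcap_{n\ge 1}\varphi^{n}(Y)=Z$ because $\varphi$ is injective and dropping the first term of a decreasing sequence does not change its intersection. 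Thus $Z$ is invariant under both $\varphi$ and $\varphi^{-1}$, so for any $z\in Z$ the full orbit of $z$ lies in $Z\subseteq Y$; applying $(2)$ to $z$ gives $X=\overline{\{\varphi^{n}(z):n\in\Z\}}\subseteq Z\subseteq Y\subsetneq X$, which is absurd. Hence every forward orbit is dense. (The remaining implication $(3)\Rightarrow(2)$, if one wishes to isolate it, is immediate since the forward orbit is contained in the full orbit, but it is not needed to close the cycle.)

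In this argument the only place serious topology enters is the passage from the forward-invariant set $Y$ to a genuinely $\varphi$-invariant set $Z$ via a nested intersection of compacts; the subtle point is verifying $\varphi(Z)=Z$ rather than merely $\varphi(Z)\subseteq Z$, which uses both compactness of $X$ and injectivity of $\varphi$. Everything else is bookkeeping with orbits, open covers, and complements.
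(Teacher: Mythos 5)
Your proof is correct: the cycle $(1)\Rightarrow(2)\Rightarrow(3)\Rightarrow(4)\Rightarrow(1)$ closes, and the delicate step $(2)\Rightarrow(3)$ is handled properly, since $Z=\bigcap_{n\ge 0}\varphi^{n}(Y)$ is exactly the $\omega$-limit set of $x$, and your use of compactness (finite intersection property) and injectivity of $\varphi$ to get $\varphi(Z)=Z$, hence full invariance and $X=\overline{\{\varphi^{n}(z):n\in\Z\}}\subseteq Z\subseteq Y\subsetneq X$, is exactly what is needed. The paper states this proposition without proof, calling it a standard result, so there is nothing to compare against; your argument is the standard one and is complete as written.
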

In our main results (see Theorems \ref{mainTheoremManifoldCase}, \ref{MainTheoremNonManifoldCase}) there is an additional finite order homeomorphism, $\beta$. We hope that the next result, which is likely known to experts, motivates our assumption that $\beta$ is finite order.
\begin{proposition}
Suppose $X$ is compact metric space, $\varphi: X\rightarrow X$ is a homeomorphism such that for each $n\in \N$, $\varphi^n$ is minimal and $\beta: X \rightarrow X$ is a finite order homeomorphism. If $\beta \circ \varphi =\varphi \circ \beta$, then $\varphi \circ \beta$ is minimal.
\end{proposition}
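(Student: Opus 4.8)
The plan is to reduce minimality of $\varphi \circ \beta$ to minimality of a single power of $\varphi$, using that $\beta$ has finite order and commutes with $\varphi$. Set $\psi := \varphi \circ \beta$ (a homeomorphism, being a composition of two), and fix $m \in \N$ with $\beta^m = \id_X$. Since $\beta \circ \varphi = \varphi \circ \beta$, an easy induction on $k$ gives $\psi^k = \varphi^k \circ \beta^k$ for all $k \in \N$. In particular,
\[
\psi^m = \varphi^m \circ \beta^m = \varphi^m .
\]

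With this in hand the argument is immediate. Let $F \subseteq X$ be non-empty and closed with $\psi(F) = F$. Applying $\psi$ repeatedly yields $\psi^m(F) = F$, i.e.\ $\varphi^m(F) = F$. Since $\varphi^m$ is minimal by hypothesis, $F = X$; as $F$ was an arbitrary non-empty closed $\psi$-invariant set, $\psi = \varphi \circ \beta$ is minimal.

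I do not expect any real obstacle here: the content is entirely in the observation that commuting factors let powers split as $\psi^k = \varphi^k \circ \beta^k$, so that the $\beta$-part disappears once $k$ is a multiple of the order of $\beta$. It is worth noting that the hypothesis is used only through minimality of $\varphi^m$ for this particular $m$, and that some such assumption is genuinely necessary: taking $\varphi = \id_X$ and $\beta$ any non-trivial finite-order homeomorphism shows $\varphi \circ \beta$ need not be minimal (indeed, it need not even be aperiodic).
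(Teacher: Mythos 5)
Your proof is correct, and its core is the same as the paper's: since $\beta$ commutes with $\varphi$ and $\beta^{m}=\id_X$, the powers split as $(\varphi\circ\beta)^{k}=\varphi^{k}\circ\beta^{k}$, so $(\varphi\circ\beta)^{m}=\varphi^{m}$ and the $\beta$-part disappears along multiples of the order. The only difference lies in how minimality is then concluded: you argue directly from the definition --- a non-empty closed $F$ with $(\varphi\circ\beta)(F)=F$ satisfies $\varphi^{m}(F)=F$, hence $F=X$ by minimality of $\varphi^{m}$ --- whereas the paper routes through the open-cover characterization of minimality in Proposition~\ref{basicMinimalEquivalent}: given a non-empty open $U$, minimality of $\varphi^{K}$ ($K$ the order of $\beta$) yields $X=U\cup\varphi^{K}(U)\cup\cdots\cup\varphi^{LK}(U)$, and these iterates are rewritten as powers of $\varphi\circ\beta$. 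Your version is marginally more economical, since it does not invoke the equivalence of characterizations; both arguments use the hypothesis only through minimality of a single power of $\varphi$ determined by the order of $\beta$.
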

Before giving the proof of this proposition, note that this result is trivially false without the finite order assumption by simply taking $\beta=\varphi^{-1}$.
\begin{proof}
Let $K$ denote the period of $\beta$ and let $U$ be a non-empty open subset of $X$. Then the fact that $\varphi^K$ is minimal and the previous proposition imply that there exists $L$ such that
\[
X=U \cup \varphi^K(U) \cup \varphi^{2K}(U) \cup \ldots \cup \varphi^{L\cdot K}(U).
\]
Since $\beta^K= \id$ and $\varphi\circ \beta=\varphi\circ \beta$ we have 
\[
(\varphi\circ \beta)^{i\cdot K}=\varphi^{i\cdot K} \circ (\beta^K)^i=\varphi^{i\cdot K}.
\]
for any $i \in \mathbb{N}$.

Using this, we have 
\[
X=U \cup \varphi^K(U) \cup \varphi^{2K}(U) \cup \ldots \cup \varphi^{L\cdot K}(U) \subseteq \bigcup_{j=0}^{L\cdot K} (\varphi\circ \beta)^j(U),
\]
and it follows from the previous proposition that $\varphi\circ \beta$ is minimal.
\end{proof} 

\subsection{Minimal dynamical systems on point-like spaces} \label{constructionZ}
In \cite{DPS:DynZ}, the authors constructed minimal homeomorphisms on  infinite ``point-like'' spaces, that is, infinite compact connected metric spaces that have both the same \v{C}ech cohomology and topological $K$-theory as a point. Moreover,  they have finite covering dimension \cite[Corollary 1.12]{DPS:DynZ}. As these systems will play a main role in the sequel, we review some of their properties.

A generalized cohomology theory is called \emph{continuous} if an inverse limit of spaces induces an inductive limit of groups at the level of the cohomology theory. The interested reader can find more on this notion in \cite[Section 21.3]{Bla:k-theory} (note that in \cite{Bla:k-theory} these notions are formulated in $C^*$-algebraic terms). Two examples of continuous generalized cohomology theories are \v{C}ech cohomology and $K$-theory. $K$-theory is the most relevant generalized cohomology in this paper but our results also apply to \v{C}ech cohomology. 

The existence of minimal diffeomorphisms of odd dimensional spheres of dimension at least three was proved by Fathi and Herman in the uniquely ergodic case \cite{FatHer:Diffeo} and later generalized by Windsor \cite{Wind:not_uniquely_ergo} to minimal diffeomorphisms with a prescribed number of ergodic measures.  In \cite{DPS:DynZ}, the authors showed that such a minimal diffeomorphism can be used to construct minimal dynamical systems on point-like spaces $Z$. Given a minimal diffeomorphism $\varphi : S^d \to S^d$, $d \geq 3$ odd, the associated space $Z$ is constructed by removing a subset $ L_{\infty}$ that is a $\varphi$-invariant immersion of $\mathbb{R}$ in $S^d$,  and completing $S^d \setminus L_{\infty}$ with respect to a metric obtained from the inverse limit structure. A homeomorphism $\zeta : Z \to Z$ is then given by extending the map $\varphi : S^d \setminus L_{\infty} \to S^d \setminus L_{\infty}$ to $Z$.  

There is a factor map $q : Z \to S^d$ which is one-to-one on $S^d \setminus L_{\infty}$, and every $\zeta$-invariant Borel probability measure $\mu$ satisfies $\mu(S^d \setminus L_{\infty}) = 1$. Further details for the factor map can be found in \cite[Corollary 1.16, Lemma 1.14]{DPS:DynZ}, and the reader is directed to the rest of the paper for the general construction of these minimal dynamical systems. We summarize the main aspects in the theorem below.

\begin{theorem} \label{ThmAboutZ}
Let $S^d$  be a sphere with odd dimension $d\geq 3$, and let $\varphi : S^d \to S^d$  be a minimal diffeomorphism. Then there exist an infinite compact metric space $Z$ with covering dimension $d$ or $d-1$ and a minimal homeomorphism \mbox{$\zeta : Z \to Z$} satisfying the following.
\begin{enumerate}
\item $Z$ is compact, connected, and homeomorphic to an inverse limit of compact contractible metric spaces $(Z_n, d_n)_{n \in \mathbb{N}}$.
\item For any continuous generalized cohomology theory we have an isomorphism $H^*(Z) \cong H^*(\{\mathrm{pt}\})$. In particular this holds for \v{C}ech cohomology and \mbox{$K$-theory}. \label{sameCohomAsPoint}
\item There is an almost one-to-one factor map $q : Z \to S^d$ which induces a bijection between $\zeta$-invariant Borel probability measures on $Z$ and $\varphi$-invariant Borel probability measures on $S^d$. \label{FactorMap}
\end{enumerate} 
\end{theorem}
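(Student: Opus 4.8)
Since this theorem assembles the main results of \cite{DPS:DynZ}, my plan is to recall the construction there and explain how the three assertions drop out, rather than to redo it from scratch. First I would invoke Fathi--Herman \cite{FatHer:Diffeo} to obtain a minimal diffeomorphism $\varphi\colon S^d\to S^d$ for odd $d\geq 3$, and then produce the $\varphi$-invariant immersed line: fix $a\in S^d$, which is aperiodic since $\varphi$ is minimal, and choose (using that $d\geq 3$ leaves codimension at least two) a short smooth embedded arc $L_0$ from $a$ to $\varphi(a)$ whose forward and backward iterates fit together head-to-tail into an immersed copy of $\R$, namely $L_\infty=\bigcup_{n\in\Z}\varphi^n(L_0)$, which is dense by minimality. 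A point I would be careful about already here is to choose $L_0$ so that $\mu(L_0)=0$, and hence $\mu(L_\infty)=\sum_{n\in\Z}\mu(\varphi^nL_0)=0$, for every $\varphi$-invariant Borel probability measure $\mu$; for a uniquely ergodic $\varphi$ (or one with only countably many ergodic measures) this can be arranged by sliding $L_0$ within a one-parameter family of arcs with the same endpoints, since a family of essentially disjoint arcs cannot all carry positive mass.

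Next I would build the inverse system. On the open dense $\varphi$-invariant set $S^d\setminus L_\infty$ one puts a sequence of metrics $d_n$ in which the two sides of the first $n$ pieces $\varphi^k(L_0)$, $|k|\leq n$, stay separated while the remaining pieces are crushed; the completion $(Z_n,d_n)$ is then a compact, ball-like space, and \cite{DPS:DynZ} proves that it is contractible and that the resulting $Z$ has covering dimension $d$ or $d-1$. The identity on $S^d\setminus L_\infty$ extends to bonding maps $Z_{n+1}\to Z_n$, and I would set $Z=\varprojlim_n Z_n$, equivalently the completion of $S^d\setminus L_\infty$ with respect to $d=\sum_n 2^{-n}d_n$. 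As an inverse limit of compact connected (indeed contractible) metric spaces it is compact, connected and metrizable, which is assertion (1). Since $\varphi$ merely shifts the pieces of $L_\infty$, it is compatible with the inverse system up to reindexing, so $\varphi|_{S^d\setminus L_\infty}$ extends to a homeomorphism $\zeta\colon Z\to Z$.

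For assertion (2), continuity of the cohomology theory together with contractibility of each $Z_n$ gives $H^*(Z)\cong\varinjlim_n H^*(Z_n)\cong H^*(\{\mathrm{pt}\})$, in particular for \v{C}ech cohomology and $K$-theory. For assertion (3), the maps $Z_n\to S^d$ extending the identity on $S^d\setminus L_\infty$ assemble into a continuous surjection $q\colon Z\to S^d$ intertwining $\zeta$ and $\varphi$, which restricts to a homeomorphism of the dense set $q^{-1}(S^d\setminus L_\infty)$ onto $S^d\setminus L_\infty$ and is therefore almost one-to-one. Minimality of $\zeta$ then follows, because a nonempty closed $\zeta$-invariant $F\subseteq Z$ has $q(F)$ a nonempty closed $\varphi$-invariant subset of $S^d$, hence $q(F)=S^d$, so $F$ contains the dense set $q^{-1}(S^d\setminus L_\infty)$ and equals $Z$. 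For the measures: any $\zeta$-invariant $\nu$ pushes to a $\varphi$-invariant $q_*\nu$, which by the choice of $L_0$ gives $L_\infty$ measure zero, so $\nu$ is carried by $q^{-1}(S^d\setminus L_\infty)$, on which $q$ is injective; this makes $q_*$ injective on invariant measures, and conversely a $\varphi$-invariant $\mu$ (with $\mu(L_\infty)=0$) pulls back along this homeomorphism to a $\zeta$-invariant lift with $q_*\nu=\mu$, so $q_*$ is onto.

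The main obstacle, and the real content of \cite{DPS:DynZ} that I would be borrowing wholesale, is twofold: defining the metrics $d_n$ and proving that the completions $Z_n$ are genuinely compact and contractible with the stated covering dimension, which needs shape theory and Hilbert-cube-manifold arguments; and making the \emph{single} arc $L_0$ do its job for all $n$ at once, even though density of the orbit of $a$ forces the iterates $\varphi^n(L_0)$ to accumulate on one another.
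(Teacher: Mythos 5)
Your proposal takes essentially the same route as the paper, which gives no independent proof of this theorem but simply summarizes the construction of \cite{DPS:DynZ} (remove a $\varphi$-invariant immersed line $L_\infty$, complete $S^d\setminus L_\infty$ with respect to the metrics coming from the inverse-limit structure, extend $\varphi$ to $\zeta$, and use the almost one-to-one factor map $q$); like the paper, you defer the genuinely hard points (the metrics $d_n$, compactness and contractibility of the $Z_n$, the covering dimension) to that reference, and your derivations of minimality and of assertions (2) and (3) from those ingredients are correct. One small correction: no careful choice of $L_0$ and no unique-ergodicity (or countably-many-ergodic-measures) hypothesis is needed to get $\mu(L_\infty)=0$, because for \emph{any} $\varphi$-invariant Borel probability measure $\mu$ the iterates $\varphi^n(L_0)$ are pairwise essentially disjoint arcs of equal $\mu$-mass and $\mu$ is non-atomic (the system is minimal and infinite), so $\mu(L_0)=0$ is automatic and your measure bijection argument works for arbitrary minimal $\varphi$, as the theorem requires.
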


\section{Existence results} \label{Sec:ExistenceResults}

In \cite{GlaWei:MinSkePro}, Glasner and Weiss show how one may obtain skew products systems which are minimal. We will review relevant notation below. In this section, we are interested in skew product systems arising from minimal homeomorphisms on point-like spaces. In the context of the discussion in the introduction, these are existence results. In later sections we will discuss variants where our goal is to put requirements on the induced maps on $K$-theory.

First, let us recall some notation from \cite{GlaWei:MinSkePro}. For a compact metric space $X$ with metric $\d_X$, let $\homeo(X)$ denote the space of homeomorphisms of $X$ equipped with the metric $\d$ given by
\[ \d(g,h) = \sup_{x \in X} \d_X(g(x), h(x)) + \sup_{x \in X} \d_X(g^{-1}(x), h^{-1}(x)).\]

 Let $(Z, \zeta)$ be a minimal dynamical system given by Theorem~\ref{ThmAboutZ}. For a compact metric space $Y$, let $X:=Z\times Y$. Define a subset of ${\homeo}(X)$ by
\[
\mathcal{O}(\zeta\times \id_Y) =\{ G^{-1} \circ (\zeta \times \id_Y )\circ G \mid G \in {\homeo}(X)\}.
\]
Still following \cite{GlaWei:MinSkePro}, we are also interested in subsets of $\mathcal{O}(\zeta\times \id_Y)$. Let ${\rm Homeo}_s(X)$ be the subgroup of ${\rm Homeo}(X)$ consisting of homeomorphisms that fix all subspaces of the form $\{z\} \times Y$ (with $z\in Z$). Notice that if $G\in {\rm Homeo}_s(X)$ then it is determined by a continuous map $Z  \ni z \mapsto g_z\in {\rm Homeo}(Y)$ via $G(z,y)=(z, g_z(y))$. Let
\[
\mathcal{S}(\zeta\times \id_Y) =\{ G^{-1} \circ (\zeta \times \id_Y )\circ G \: | \: G \in {\homeo}_s(X) \}.
\]

\begin{theorem} \label{GlasnerWeissToZ}
Let $(Z, \zeta)$ be a minimal point-like system as in Theorem~\ref{ThmAboutZ}. Suppose that $Y$ is a compact metric space with a path connected subgroup $\Gamma \subset {\rm Homeo}(Y)$ such that $(Y, \Gamma)$ is minimal. Then there exists a residual subset of $\overline{\mathcal{O}(\zeta\times \id_Y)} \subset \homeo(Z \times Y)$ consisting entirely of minimal homeomorphisms. Likewise, there is a residual subset of $\overline{\mathcal{S}(\zeta\times \id_Y)}$ consisting entirely of minimal homeomorphisms.
\end{theorem}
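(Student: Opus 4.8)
The plan is to deduce this from the original Glasner–Weiss theorem \cite[Theorem 1]{GlaWei:MinSkePro} by verifying that the hypotheses of that theorem are met by the system $(Z,\zeta)$ and the space $Y$ with its group $\Gamma$. Recall that the Glasner–Weiss result asserts: if $(Z,\zeta)$ is a minimal dynamical system on a compact metric space all of whose invariant Borel probability measures are supported on a subset over which a certain ``thinness'' or genericity condition holds, and if $\Gamma\subset\homeo(Y)$ is path connected with $(Y,\Gamma)$ minimal, then $\overline{\mathcal{O}(\zeta\times\id_Y)}$ (respectively $\overline{\mathcal{S}(\zeta\times\id_Y)}$) contains a residual set of minimal homeomorphisms of $Z\times Y$. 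So the work is entirely in checking that our particular $(Z,\zeta)$ satisfies whatever measure-theoretic/topological side condition the Glasner–Weiss theorem imposes on the base.

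The key step is to use Theorem~\ref{ThmAboutZ}\eqref{FactorMap}: there is an almost one-to-one factor map $q:Z\to S^d$ which induces a bijection between $\zeta$-invariant Borel probability measures on $Z$ and $\varphi$-invariant Borel probability measures on $S^d$. Since $q$ is almost one-to-one, there is a dense $G_\delta$ set of points in $Z$ on which $q$ is injective, and — as recalled before the statement — every $\zeta$-invariant measure $\mu$ satisfies $\mu(S^d\setminus L_\infty)=1$, i.e.\ is carried by the set where $q$ is a bijection onto its image. Thus $(Z,\zeta)$ is an almost one-to-one extension of the minimal diffeomorphism $(S^d,\varphi)$, and every invariant measure is concentrated on the ``good'' part. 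This is precisely the kind of structure (a minimal system which is measure-theoretically an extension of a minimal diffeomorphism of a manifold, with invariant measures avoiding the removed locus) that makes the Glasner–Weiss machinery applicable: one transports the genericity argument from $S^d$ up to $Z$ along $q$. I would also invoke Theorem~\ref{ThmAboutZ}(1), that $Z$ is an inverse limit of compact contractible spaces, if the argument needs a handle on the topology of $Z$ (e.g.\ to run Baire category arguments in $\homeo(Z\times Y)$, which requires completeness of the metric $\d$ — automatic since $Z\times Y$ is compact metric).

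With the hypotheses verified, the conclusion for $\overline{\mathcal{O}(\zeta\times\id_Y)}$ is immediate from \cite[Theorem 1]{GlaWei:MinSkePro}. For the statement about $\overline{\mathcal{S}(\zeta\times\id_Y)}$, I would note that Glasner and Weiss prove their residuality statement already at the level of the smaller space $\overline{\mathcal{S}(\zeta\times\id_Y)}$ of ``skew products'' coming from $\homeo_s(Z\times Y)$ — these are exactly the maps $(z,y)\mapsto(\zeta(z), g_z(y))$ for continuous $z\mapsto g_z$ — and the $\mathcal{O}$-version follows since $\overline{\mathcal{S}(\zeta\times\id_Y)}\subseteq\overline{\mathcal{O}(\zeta\times\id_Y)}$ and minimality is inherited. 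Concretely: one shows that for each non-empty open $U\subseteq Z\times Y$ the set of $h$ in the closure for which some finite union $\bigcup_{j=0}^k h^j(U)$ equals $Z\times Y$ is open and dense, using path-connectedness of $\Gamma$ to perturb $g_z$ along a path realizing minimality of $(Y,\Gamma)$, combined with minimality of $\zeta$ and the almost one-to-one factor structure to spread the perturbation around a $\zeta$-orbit; intersecting over a countable basis of $U$'s gives the residual set via Proposition~\ref{basicMinimalEquivalent}(4).

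The main obstacle I anticipate is matching the precise form of the hypothesis in \cite[Theorem 1]{GlaWei:MinSkePro} to the properties listed in Theorem~\ref{ThmAboutZ}: Glasner–Weiss phrase their condition on the base system in a specific technical way (e.g.\ in terms of the base being an almost one-to-one extension of, or otherwise dynamically tied to, a system on a nice space), and one must check that ``$(Z,\zeta)$ is an almost one-to-one extension of the minimal diffeomorphism $(S^d,\varphi)$ with all invariant measures living on $S^d\setminus L_\infty$'' is exactly strong enough. If it is not literally the stated hypothesis, the fallback is to re-run their Baire-category proof verbatim with $Z$ in place of their base space, the only non-formal input being that the generic-orbit argument can be pushed through $q$ — which is where Theorem~\ref{ThmAboutZ}\eqref{FactorMap} does the real work.
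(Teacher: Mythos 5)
Your overall route is the same as the paper's: the paper proves this theorem simply by citing \cite[Theorem 1]{GlaWei:MinSkePro} directly, since $(Z,\zeta)$ is a minimal system on a compact metric space and $(Y,\Gamma)$ satisfies exactly the hypothesis of that theorem. However, you have misremembered what Glasner--Weiss actually assume: their Theorem 1 imposes no measure-theoretic ``thinness'' or genericity condition on the base system, only minimality (plus the stated condition on $(Y,\Gamma)$). Consequently the bulk of your proposal --- verifying, via the factor map $q:Z\to S^d$ and the fact that every $\zeta$-invariant measure lives on $S^d\setminus L_\infty$, that $(Z,\zeta)$ is an almost one-to-one extension of $(S^d,\varphi)$ --- is not needed for this statement. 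Since you are only checking extra properties, this does not make the argument wrong, but it obscures that the application is immediate; the almost one-to-one/measure structure is what the paper uses later (for the minimality of $\tilde\zeta$, the bijection of invariant measures, and, via \cite[Theorem 2]{GlaWei:MinSkePro}, unique ergodicity as in Remark \ref{uniqueErgodic}), not here.

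One step of your proposal is genuinely invalid as stated: you cannot deduce the statement about $\overline{\mathcal{O}(\zeta\times\id_Y)}$ from residuality in $\overline{\mathcal{S}(\zeta\times\id_Y)}$ merely because $\overline{\mathcal{S}(\zeta\times\id_Y)}\subseteq\overline{\mathcal{O}(\zeta\times\id_Y)}$. A residual subset of a closed subspace need not be residual in the ambient space (the subspace could well be nowhere dense in $\overline{\mathcal{O}(\zeta\times\id_Y)}$), so ``minimality is inherited'' does not give the residuality claim for the larger closure. The correct handling is either to note that \cite[Theorem 1]{GlaWei:MinSkePro} asserts both versions, or to run the Baire category argument in $\overline{\mathcal{O}(\zeta\times\id_Y)}$ itself: density of the open sets $E_U$ there is obtained by conjugation, reducing the approximation of $G^{-1}\circ(\zeta\times\id_Y)\circ G$ near $U$ to the approximation of $\zeta\times\id_Y$ near $G(U)$ by skew perturbations --- the same reduction you sketch in your ``concretely'' sentence, which is fine. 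With that correction, and with the hypotheses of Glasner--Weiss read off as they actually are (minimality of $(Z,\zeta)$ from Theorem \ref{ThmAboutZ}), your argument collapses to the paper's one-line proof.
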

\begin{proof}
The proof is a direct application of Theorem 1 in \cite{GlaWei:MinSkePro}.
\end{proof}

A list of spaces $Y$ that have a path connected subgroup $\Gamma \subset {\rm Homeo}(Y)$ such that $(Y, \Gamma)$ is minimal can be found on page 7 of \cite{DirMal:NonInvMinSkePro}. Although many spaces satisfy this condition, it does not hold for an arbitrary finite CW-complex. However, if $W$ is a finite connected CW-complex, the product of $W$ with the Hilbert cube $Q$ is a connected compact Hilbert cube manifold (see for example \cite[page 498]{West:HilbertCubeMflds}) and hence such a subgroup of ${\rm Homeo}(W \times Q)$ exists. In particular, we may apply Theorem \ref{GlasnerWeissToZ} to $Y =  W\times Q$. Since both the Hilbert cube $Q$ and $Z$ have the same $K$-theory and cohomology as a point, we arrive at the following:

\begin{theorem} \label{Thm:ExistenceOfMinHomeo}
Let $W$ be a finite connected CW-complex, and let $Q$ denote the Hilbert cube. Then $Z\times W\times Q$ admits a minimal homeomorphism and there are  isomorphisms
\[ H^*(Z\times W\times Q) \cong H^*(W), \quad K^*(Z \times W \times Q) \cong K^*(W)\]
of \v{Cech} cohomology and $K$-theory.
\end{theorem}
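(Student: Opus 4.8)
The plan is to treat the two assertions separately: the existence of a minimal homeomorphism follows from the Glasner--Weiss machinery packaged as Theorem~\ref{GlasnerWeissToZ}, while the identification of the cohomology and $K$-theory is a continuity argument built on the inverse-limit description of $Z$ in Theorem~\ref{ThmAboutZ}. For existence, set $Y := W \times Q$. As observed just before the theorem, since $W$ is a finite connected CW-complex, $Y = W \times Q$ is a compact connected Hilbert cube manifold (\cite[page~498]{West:HilbertCubeMflds}), hence admits a path-connected subgroup $\Gamma \subseteq \homeo(Y)$ with $(Y,\Gamma)$ minimal (\cite{DirMal:NonInvMinSkePro}). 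Thus Theorem~\ref{GlasnerWeissToZ} applies with this $Y$ and produces a residual subset of $\overline{\mathcal{O}(\zeta \times \id_Y)} \subseteq \homeo(Z \times W \times Q)$ consisting entirely of minimal homeomorphisms. Because $\overline{\mathcal{O}(\zeta \times \id_Y)}$ is a closed, hence Baire, subspace of the complete metric space $(\homeo(Z \times W \times Q), \d)$, this residual set is nonempty, and any of its elements is the desired minimal homeomorphism.

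For the cohomology and $K$-theory, I would use part~(1) of Theorem~\ref{ThmAboutZ} to write $Z \cong \varprojlim_n Z_n$ with each $Z_n$ a compact contractible metric space. Since inverse limits commute with products, $Z \times W \times Q \cong \varprojlim_n (Z_n \times W \times Q)$, with connecting maps $g_n \times \id_{W \times Q}$, where the $g_n \colon Z_{n+1} \to Z_n$ are the bonding maps of $(Z_n)$. As $Z_n$ and $Q$ are contractible, the coordinate projection $\pi_n \colon Z_n \times W \times Q \to W$ is a homotopy equivalence, so $\pi_n^{*}$ is an isomorphism; and since $\pi_n \circ (g_n \times \id_{W \times Q}) = \pi_{n+1}$ on the nose, the isomorphisms $\pi_n^{*}$ identify the inductive system $\bigl(H^{*}(Z_n \times W \times Q)\bigr)_n$ with the constant system $H^{*}(W)$ equipped with identity maps. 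Now $\mathrm{C}^{*}$-algebra $K$-theory is continuous and $C(Z \times W \times Q) = \varinjlim_n C(Z_n \times W \times Q)$, so topological $K$-theory satisfies $K^{*}(\varprojlim_n X_n) \cong \varinjlim_n K^{*}(X_n)$ for this system, and the same continuity holds for \v{C}ech cohomology. Therefore
\[
H^{*}(Z \times W \times Q) \cong \varinjlim_n H^{*}(Z_n \times W \times Q) \cong H^{*}(W),
\]
and specializing $H^{*}$ first to \v{C}ech cohomology and then to topological $K$-theory yields the two stated isomorphisms.

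I expect essentially all of the real content to be in the imported results---Theorem~\ref{GlasnerWeissToZ} (hence \cite{GlaWei:MinSkePro}) and the Hilbert cube manifold input---rather than in the argument itself. Within the proof the only points to verify are that the projections $\pi_n$ are genuinely compatible with the bonding maps (so the inductive limit collapses to $H^{*}(W)$) and that continuity of topological $K$-theory really does apply to the system $(Z_n \times W \times Q)$; neither presents a serious obstacle, so the main task is simply to assemble the available machinery---Glasner--Weiss, the point-like system $(Z,\zeta)$, and Hilbert cube manifold theory---in the right order.
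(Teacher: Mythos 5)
Your proof is correct, and the existence half is exactly the paper's argument: apply Theorem \ref{GlasnerWeissToZ} to $Y=W\times Q$, which is a compact connected Hilbert cube manifold, and extract a minimal homeomorphism from the (nonempty, by Baire) residual set. Where you diverge is in the computation of $H^*$ and $K^*$: the paper simply invokes the K\"unneth formula together with the contractibility of $Q$ and Theorem \ref{ThmAboutZ}(\ref{sameCohomAsPoint}) (i.e.\ $K^*(Z)\cong K^*(\mathrm{pt})$, with no Tor obstruction since that group is free), whereas you bypass K\"unneth entirely and work from Theorem \ref{ThmAboutZ}(1): writing $Z\times W\times Q\cong\varprojlim_n(Z_n\times W\times Q)$, observing that each projection $\pi_n$ to $W$ is a homotopy equivalence strictly compatible with the bonding maps, and then applying continuity of \v{C}ech cohomology and of topological $K$-theory (via $C(\varprojlim X_n)\cong\varinjlim C(X_n)$) to collapse the constant inductive system to $H^*(W)$ and $K^*(W)$. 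Your route is slightly longer but more self-contained: it uses only part (1) of Theorem \ref{ThmAboutZ} plus the continuity property the paper has already set up, and it sidesteps any worry about the precise form of the K\"unneth theorem for $K$-theory or \v{C}ech cohomology of general compact metric spaces; the paper's route is shorter because it leans on part (\ref{sameCohomAsPoint}) and the standard K\"unneth machinery. Both arguments are sound, and the one point you flagged---compatibility of the $\pi_n$ with the bonding maps and applicability of continuity to the system $(Z_n\times W\times Q)$---does indeed go through without difficulty.
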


\begin{proof}
The existence of the minimal homeomorphism follows using Theorem \ref{GlasnerWeissToZ}. The second statement follows from the K\"unneth formula, the fact that $Q$ is contractible, and Theorem \ref{ThmAboutZ} (\ref{sameCohomAsPoint}).
\end{proof}

\begin{remark} \label{uniqueErgodic}
In the case that the system $(Z, \zeta)$ is uniquely ergodic, then  Theorem 2 in \cite{GlaWei:MinSkePro} tells us that  we can obtain a uniquely ergodic skew product system.
\end{remark}

\begin{remark}
If $M$ is a closed connected manifold of finite dimension, then we can apply Theorem \ref{GlasnerWeissToZ} directly to $X=Z\times M$. In this case, $X$ is finite dimensional. In the general case, when we consider finite CW-complexes, we must include the infinite dimensional Hilbert cube and hence the space $X$ is also infinite dimensional.
\end{remark}

\begin{proposition} \label{alphaActZeroKtheory}
Let $W$ be a finite CW-complex, $Q$ the Hilbert cube, and $(Z, \zeta)$ a minimal point-like system. Then any minimal homeomorphism $\alpha \in  \overline{\mathcal{O}(\zeta \times \id_{W\times Q})}$ (or $\overline{\mathcal{S}(\zeta \times \id_{W\times Q})}$) given by Theorem~\ref{GlasnerWeissToZ} induces the identity map on $K$-theory and cohomology.
\end{proposition}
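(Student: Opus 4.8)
The plan is to prove the stronger statement that \emph{every} homeomorphism in $\overline{\mathcal{O}(\zeta\times\id_{W\times Q})}$ --- not merely the minimal ones, and likewise every homeomorphism in the subset $\overline{\mathcal{S}(\zeta\times\id_{W\times Q})}$ --- induces the identity on $K$-theory and on \v{C}ech cohomology. Write $Y=W\times Q$ and let $\pi_Y\colon Z\times Y\to Y$ be the projection. The starting observation is that for \emph{any} continuous generalized cohomology theory $H^*$, and in particular for $K^*$ and \v{C}ech cohomology, the homomorphism $\pi_Y^*\colon H^*(Y)\to H^*(Z\times Y)$ is an isomorphism. To see this I would use Theorem~\ref{ThmAboutZ}(1) to write $Z=\varprojlim_n Z_n$ with each $Z_n$ compact and contractible; then $Z\times Y=\varprojlim_n(Z_n\times Y)$, so by continuity $H^*(Z\times Y)=\varinjlim_n H^*(Z_n\times Y)$. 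Each projection $Z_n\times Y\to Y$ is a homotopy equivalence (because $Z_n$ is contractible), these identifications are compatible with the bonding maps, and the induced map on the colimit is exactly $\pi_Y^*$; hence $\pi_Y^*$ is an isomorphism.

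With this in hand, two things follow quickly. First, $\pi_Y\circ(\zeta\times\id_Y)=\pi_Y$, so applying $H^*$ and cancelling the injective map $\pi_Y^*$ gives $(\zeta\times\id_Y)^*=\id$ on $H^*(Z\times Y)$. Second, for any $G\in\homeo(Z\times Y)$ the conjugate $g=G^{-1}\circ(\zeta\times\id_Y)\circ G$ satisfies, by functoriality and $(G^{-1})^*=(G^*)^{-1}$,
\[
g^*=G^*\circ(\zeta\times\id_Y)^*\circ(G^*)^{-1}=\id.
\]
Thus every element of $\mathcal{O}(\zeta\times\id_Y)$, and hence also of $\mathcal{S}(\zeta\times\id_Y)\subseteq\mathcal{O}(\zeta\times\id_Y)$, induces the identity on $H^*$.

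The remaining step is to pass to the closure, and this is the only place that requires care. The key point is that $Y=W\times Q$ is a compact ANR (indeed a Hilbert cube manifold), so there is $\varepsilon>0$ such that any two maps into $Y$ that are $\varepsilon$-close are homotopic. Since $\pi_Y$ is uniformly continuous, I can then choose a neighbourhood $\mathcal{V}$ of $\id$ in $\homeo(Z\times Y)$ small enough that $h\in\mathcal{V}$ forces $\pi_Y\circ h$ to be $\varepsilon$-close to $\pi_Y$; for such $h$ we get $\pi_Y\circ h\simeq\pi_Y$, hence $h^*\circ\pi_Y^*=\pi_Y^*$, hence $h^*=\id$. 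Finally, given $\alpha\in\overline{\mathcal{O}(\zeta\times\id_Y)}$, pick $g\in\mathcal{O}(\zeta\times\id_Y)$ close enough to $\alpha$ that $h:=\alpha\circ g^{-1}$ lies in $\mathcal{V}$ --- possible because composition and inversion are continuous for the metric on $\homeo(Z\times Y)$ and $Z\times Y$ is compact --- and then $\alpha=h\circ g$ yields $\alpha^*=g^*\circ h^*=\id$.

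I expect the closure step to be the main obstacle: one needs that a sufficiently small perturbation of a homeomorphism does not alter its action on $H^*$, and the natural way to get this is via the ANR property of the fibre $Y=W\times Q$. This is precisely why the whole argument is routed through the projection onto $Y$ rather than trying to work with $Z\times Y$ (or $Z$) directly, since $Z$ itself need not be an ANR.
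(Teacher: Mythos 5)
Your proof is correct, and its skeleton matches the paper's: show that every element of $\mathcal{O}(\zeta\times\id_{W\times Q})$ induces the identity via the conjugation identity $(G^{-1}\circ(\zeta\times\id_{W\times Q})\circ G)^*=G^*\circ(\zeta\times\id_{W\times Q})^*\circ(G^*)^{-1}$, and then pass to the closure. The differences lie in the two inputs. For the first, the paper simply cites \cite[Proposition 2.8]{DPS:DynZ} for the fact that $\zeta^*=\id$ on $K^*(Z)$ (combined with the identification $K^*(Z\times W\times Q)\cong K^*(W\times Q)$), whereas you derive $(\zeta\times\id_Y)^*=\id$ directly from $\pi_Y\circ(\zeta\times\id_Y)=\pi_Y$ once you know $\pi_Y^*$ is an isomorphism, which you prove from continuity of the theory and the presentation $Z=\varprojlim Z_n$ with each $Z_n$ compact and contractible; this is self-contained and works uniformly for any continuous generalized cohomology theory. (One tiny slip: to cancel $\pi_Y^*$ on the right in $(\zeta\times\id_Y)^*\circ\pi_Y^*=\pi_Y^*$ you need surjectivity of $\pi_Y^*$, not injectivity --- harmless here since it is an isomorphism.) For the second input, the paper's proof is silent on why membership in the closure suffices; your argument --- route everything through the projection to the compact ANR $Y=W\times Q$, use that uniformly close maps into a compact ANR are homotopic, and factor $\alpha=h\circ g$ with $g\in\mathcal{O}(\zeta\times\id_{W\times Q})$ and $h=\alpha\circ g^{-1}$ uniformly close to the identity --- supplies exactly the missing justification, and it is the right way around the fact that $Z\times W\times Q$ itself need not be an ANR (so one cannot argue directly that close homeomorphisms of the total space are homotopic). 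In short, your proposal reproduces the paper's argument while replacing its external citation by a direct computation and filling in its implicit closure step.
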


\begin{proof}
We give a detailed proof only for the map on $K$-theory but the proof for the map on cohomology is similar. Since $\alpha$ is in the closure of 
\[ \mathcal{O}(\zeta\times \id_{W\times Q}) =\{ G^{-1} \circ (\zeta \times \id_{W \times Q} )\circ G \: \mid \: G \in {\rm Homeo}(X) \},\] we need only show that elements of $\mathcal{O}(\zeta \times \id_{W \times Q})$ act as the identity map on $K$-theory. This follows since $\zeta^*: K^*(Z) \rightarrow K^*(Z)$ is the identity map, as is shown in the proof of  \cite[Proposition 2.8]{DPS:DynZ}. Thus $(G^{-1} \circ (\zeta \times \id_{W\times Q} )\circ G)^*=G^* \circ \id_{K^*(W\times Q)} \circ (G^*)^{-1}=\id_{K^*(W \times Q)}$.
\end{proof}

A similar but slightly different construction is also possible which will allow us to say more about invariant measures of the minimal dynamical system.   Let $(S^d, \varphi)$, $d \geq 3$ odd,  be a minimal diffeomorphism and $(Z, \zeta)$ the corresponding point-like system given by Theorem~\ref{ThmAboutZ}. For a finite \mbox{CW-complex} $W$ and the Hilbert cube $Q$, we apply \cite[Theorem 1]{GlaWei:MinSkePro} to the product space $S^d\times W \times Q$ to obtain a minimal homeomorphism 
\[
\tilde{\varphi} : S^d\times W \times Q \rightarrow S^d\times W \times Q, \qquad
(s, w, v) \mapsto (\varphi(s), h_s (w, v)),
\]
where $s \in S^d$, $w\in W$, $v \in Q$ and $h : S^d \rightarrow {\rm Homeo}(W\times Q)$. Let $q : Z \to S^d$ be the factor map of Theorem~\ref{ThmAboutZ} (\ref{FactorMap}). Then we define a homeomorphism 
\[ \tilde{\zeta}: Z\times W\times Q \rightarrow Z\times W\times Q, \qquad (z, w, v) \mapsto (\zeta(z), h_{q(z)}(w, v)).
\]

\begin{proposition} \label{FacMapConTwo}
There is a factor map 
\[ \tilde{q} : (Z\times W\times Q, \tilde{\zeta}) \rightarrow (S^d\times W \times Q, \tilde{\varphi}) \]
defined by $\tilde{q}= q \times \id_{W\times Q}$.
\end{proposition}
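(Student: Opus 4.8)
The plan is to check directly that $\tilde q = q\times\id_{W\times Q}$ enjoys the three defining features of a factor map of compact metric dynamical systems: that it is a continuous surjection $Z\times W\times Q\to S^d\times W\times Q$ and that it intertwines $\tilde\zeta$ with $\tilde\varphi$. Both source and target are compact metric, being products of $Z$, the Hilbert cube $Q$, the finite CW-complex $W$, and the sphere $S^d$, so there is nothing to verify on that front.

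Continuity and surjectivity will be immediate. By Theorem~\ref{ThmAboutZ}~(\ref{FactorMap}), $q\colon Z\to S^d$ is a continuous surjection (indeed almost one-to-one), and $\id_{W\times Q}$ is trivially continuous and surjective; since a product of continuous (resp.\ surjective) maps is continuous (resp.\ surjective), $\tilde q$ is a continuous surjection. The only real content is the equivariance $\tilde q\circ\tilde\zeta=\tilde\varphi\circ\tilde q$, which I would establish by a direct computation on a point $(z,w,v)\in Z\times W\times Q$. Using the defining formulas for $\tilde\zeta$ and $\tilde\varphi$,
\[
\tilde q\big(\tilde\zeta(z,w,v)\big)=\tilde q\big(\zeta(z),\,h_{q(z)}(w,v)\big)=\big(q(\zeta(z)),\,h_{q(z)}(w,v)\big),
\]
whereas
\[
\tilde\varphi\big(\tilde q(z,w,v)\big)=\tilde\varphi\big(q(z),w,v\big)=\big(\varphi(q(z)),\,h_{q(z)}(w,v)\big).
\]
The last coordinates already agree, and the first coordinates agree because $q$ is a factor map from $(Z,\zeta)$ to $(S^d,\varphi)$, that is, $q\circ\zeta=\varphi\circ q$; hence $\tilde q\circ\tilde\zeta=\tilde\varphi\circ\tilde q$, as desired.

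I do not expect any genuine obstacle here: the homeomorphism $\tilde\zeta$ was set up precisely by inserting $h_{q(z)}$ in the second coordinate so that, after applying $q$ in the first coordinate, it matches $\tilde\varphi$, while the first coordinate then takes care of itself because $q$ already intertwines $\zeta$ and $\varphi$. If it is useful later, one can record for free that $\tilde q$ is also almost one-to-one: $q$ is one-to-one over the dense subset $S^d\setminus L_\infty$ of $S^d$, so $\tilde q$ is one-to-one over the dense subset $(S^d\setminus L_\infty)\times W\times Q$ of $S^d\times W\times Q$.
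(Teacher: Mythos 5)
Your proof is correct and follows essentially the same route as the paper: surjectivity (and continuity) are inherited from $q$, and the intertwining $\tilde q\circ\tilde\zeta=\tilde\varphi\circ\tilde q$ is verified by the same direct pointwise computation using $q\circ\zeta=\varphi\circ q$. The extra remark on almost one-to-oneness is accurate but not needed for the statement.
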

\begin{proof} Since $q$ is a factor map, it is clear that $\tilde{q}= q \times \id_{W\times Q}$ is surjective. Also,
\begin{eqnarray*} 
\tilde{q} \circ \tilde{\zeta} (z, w, v) &=& \tilde{q}(\zeta(z), h_{q(z)}(w,v)) = (q (\zeta(z)), h_{q(z)}(w,v))\\
&=& (\varphi(q(z)), h_{q(z)}(w,v)) = \tilde{\varphi}(q(z), h_{q(z)}(w,v)),
\end{eqnarray*}
so $\tilde{q}$ intertwines the actions. Thus $\tilde{q}$ is a factor map.
\end{proof}

\begin{proposition} \label{MinMapConTwo}
The homeomorphism \[ \tilde{\zeta}: Z\times W\times Q \rightarrow Z\times W\times Q, \qquad (z, w,v) \mapsto (\zeta(z), h_{q(z)}(w,v))
\] is minimal. 
\end{proposition}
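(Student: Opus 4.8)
The plan is to exploit the factor map $\tilde q = q \times \id_{W\times Q} \colon (Z\times W\times Q,\tilde\zeta) \to (S^d\times W\times Q,\tilde\varphi)$ of Proposition~\ref{FacMapConTwo}, the minimality of $\tilde\varphi$ (which holds by construction, via \cite[Theorem 1]{GlaWei:MinSkePro}), and the fact that $q \colon Z \to S^d$ is almost one-to-one (Theorem~\ref{ThmAboutZ}~(\ref{FactorMap})). Set
\[ Z_1 = \{ z \in Z : q^{-1}(q(z)) = \{z\} \}. \]
The first step is to check that $Z_1$ is dense in $Z$. It is non-empty since $q$ is almost one-to-one, and it is $\zeta$-invariant: if $z \in Z_1$ and $q(z') = q(\zeta^n z) = \varphi^n(q(z))$ for some $n \in \Z$, then $q(\zeta^{-n} z') = \varphi^{-n}(q(z')) = q(z)$, forcing $\zeta^{-n} z' = z$, so $q^{-1}(q(\zeta^n z)) = \{\zeta^n z\}$. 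As $(Z,\zeta)$ is minimal, the full orbit of any point of $Z_1$ is dense in $Z$ and contained in $Z_1$, so $Z_1$ is dense.

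Next I would take a non-empty closed set $F \subseteq Z\times W\times Q$ with $\tilde\zeta(F) = F$ and prove $F = Z\times W\times Q$. Since $\tilde q$ intertwines $\tilde\zeta$ and $\tilde\varphi$, the image $\tilde q(F)$ is non-empty, closed, and $\tilde\varphi$-invariant, so minimality of $\tilde\varphi$ gives $\tilde q(F) = S^d\times W\times Q$. Now fix $z \in Z_1$, put $s = q(z)$, and let $(w,v) \in W\times Q$ be arbitrary. Then $(s,w,v) \in \tilde q(F)$, so there is $(z',w',v') \in F$ with $q(z') = s$ and $(w',v') = (w,v)$; since $q^{-1}(s) = \{z\}$ we conclude $z' = z$, hence $(z,w,v) \in F$. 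Therefore $\{z\}\times W\times Q \subseteq F$ for every $z \in Z_1$, so $F$ contains the dense subset $Z_1\times W\times Q$ of $Z\times W\times Q$, and being closed it equals all of $Z\times W\times Q$. Thus $\tilde\zeta$ is minimal.

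The only genuine content is the density of $Z_1$, that is, extracting from the almost one-to-one hypothesis that the singleton-fibre points are dense and $\zeta$-invariant; everything after that is a routine chase through the factor map $\tilde q$. One could equivalently apply Zorn's lemma to choose a minimal subset of $(Z\times W\times Q,\tilde\zeta)$ and run the same fibre argument on it, dispensing with $F$ altogether; I expect either formulation to go through without difficulty.
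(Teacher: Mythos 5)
Your proof is correct and follows essentially the same route as the paper: push a closed invariant set $F$ through the factor map $\tilde q$, use minimality of $\tilde\varphi$ to get $\tilde q(F)=S^d\times W\times Q$, and then pull back over a dense set of singleton fibres to conclude $F$ is dense, hence everything. The only (harmless) difference is that the paper takes the dense singleton-fibre set to be the explicit copy of $S^d\setminus L_\infty$ inside $Z$ (citing \cite[Lemma 1.14]{DPS:DynZ}), whereas you recover its density abstractly from the almost one-to-one property together with $\zeta$-invariance and minimality of $(Z,\zeta)$.
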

\begin{proof}
Let $L_{\infty} \subset S^d$ denote the $\varphi$-invariant immersion of $\mathbb{R}$ which is removed in the construction of $Z$. Suppose $F$ is a closed non-empty $\tilde{\zeta}$-invariant subset of $Z\times W\times Q$. Then $\tilde{q}(F)$ is a closed non-empty $\tilde{\varphi}$-invariant subset of $S^d\times W \times Q$ and since $\tilde{\varphi}$ is minimal, we have that $\tilde{q}(F)=S^d\times W \times Q$. By \cite[Lemma 1.14]{DPS:DynZ}, $q$ is injective when restricted to the $(S^d \setminus L_{\infty}) \subset Z$, from which it immediately follows that $\tilde{q}$ is injective on $(S^d \setminus L_{\infty}) \times W \times Q$. Hence ($S^d \setminus L_{\infty}) \times W \times Q \subseteq F$. However $(S^d \setminus L_{\infty}) \times W \times Q$ is dense in $Z\times W\times Q$. Thus $F$ is both closed and dense, so we conclude $F=Z\times W\times Q$. It follows that $\tilde{\zeta}$ is minimal.
\end{proof}

\begin{proposition}
The factor map $ \tilde{q} : (Z\times W\times Q, \tilde{\zeta}) \rightarrow (S^d\times W \times Q, \tilde{\varphi})$ induces a bijection between the $\tilde{\zeta}$-invariant Borel probability measures on $Z\times W\times Q$ and the $\tilde{\varphi}$-invariant Borel probability measures on $S^d\times W\times Q$. 
\end{proposition}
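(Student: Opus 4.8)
The plan is to show that the pushforward assignment $\mu \mapsto \tilde{q}_{*}\mu$ is the claimed bijection, following the same strategy used to prove the corresponding statement for $q$ in Theorem~\ref{ThmAboutZ}~(\ref{FactorMap}). Since $\tilde{q}$ is a factor map by Proposition~\ref{FacMapConTwo}, the pushforward of any $\tilde{\zeta}$-invariant Borel probability measure is $\tilde{\varphi}$-invariant, so $\tilde{q}_{*}$ does send the first set of measures into the second; the content is that it is injective and surjective.

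The key step is to locate the support of invariant measures. The coordinate projections $\pi \colon Z\times W\times Q \to Z$ and $p \colon S^{d}\times W\times Q \to S^{d}$ are factor maps onto $(Z,\zeta)$ and $(S^{d},\varphi)$ respectively, because $\tilde{\zeta}$ and $\tilde{\varphi}$ act through $\zeta$ and $\varphi$ in the first coordinate. Hence, given a $\tilde{\zeta}$-invariant measure $\mu$, the measure $\pi_{*}\mu$ is $\zeta$-invariant and so, by Theorem~\ref{ThmAboutZ}~(\ref{FactorMap}) and the discussion preceding it, assigns full measure to $S^{d}\setminus L_{\infty}$ regarded inside $Z$; thus $\mu$ is concentrated on the Borel set $E:=(S^{d}\setminus L_{\infty})\times W\times Q\subseteq Z\times W\times Q$. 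In the same way, every $\tilde{\varphi}$-invariant measure $\nu$ pushes forward under $p$ to a $\varphi$-invariant measure on $S^{d}$, which gives $L_{\infty}$ measure zero, so $\nu$ is concentrated on $E':=(S^{d}\setminus L_{\infty})\times W\times Q\subseteq S^{d}\times W\times Q$.

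Next I would make $\tilde{q}$ invertible on these good sets. By \cite[Lemma 1.14]{DPS:DynZ}, $q$ restricts to a continuous bijection from $S^{d}\setminus L_{\infty}$ (inside $Z$) onto $S^{d}\setminus L_{\infty}$ (inside $S^{d}$); taking the product with $\id_{W\times Q}$, the map $\tilde{q}$ restricts to a continuous bijection $E\to E'$, which is a Borel isomorphism by the Lusin--Souslin theorem since $E$ and $E'$ are Borel subsets of Polish spaces. Moreover $L_{\infty}$ is $\varphi$-invariant and $\zeta$ extends $\varphi$, so $\tilde{\zeta}(E)=E$ and $\tilde{\varphi}(E')=E'$, and the intertwining relation $\tilde{q}\circ\tilde{\zeta}=\tilde{\varphi}\circ\tilde{q}$ from Proposition~\ref{FacMapConTwo} shows that $\tilde{q}|_{E}$ conjugates $\tilde{\zeta}|_{E}$ to $\tilde{\varphi}|_{E'}$.

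The two properties then follow formally. For injectivity, if $\mu_{1},\mu_{2}$ are $\tilde{\zeta}$-invariant with $\tilde{q}_{*}\mu_{1}=\tilde{q}_{*}\mu_{2}=\nu$, then each $\mu_{i}$ is concentrated on $E$ and must equal the transport of $\nu|_{E'}$ through the Borel isomorphism $\tilde{q}|_{E}$, so $\mu_{1}=\mu_{2}$. For surjectivity, given a $\tilde{\varphi}$-invariant $\nu$, it is concentrated on $E'$ and $\nu|_{E'}$ is $\tilde{\varphi}|_{E'}$-invariant; transporting it back through $\tilde{q}|_{E}$ and extending by zero produces a $\tilde{\zeta}$-invariant Borel probability measure $\mu$ with $\tilde{q}_{*}\mu=\nu$. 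I do not expect a genuine obstacle here: the argument is essentially the proof of Theorem~\ref{ThmAboutZ}~(\ref{FactorMap}) in \cite{DPS:DynZ} with everything multiplied by $W\times Q$, the only ingredients beyond formal measure theory being the concentration of invariant measures away from $L_{\infty}$ — immediate once one observes that the coordinate projections are factor maps — and the standard fact that a continuous injection between standard Borel spaces is a Borel isomorphism onto its image.
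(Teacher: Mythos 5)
Your proposal is correct and follows essentially the same route as the paper: show every invariant measure (on either side) concentrates on $(S^d\setminus L_\infty)\times W\times Q$, using the $\varphi$-invariance of $L_\infty$ and the measure statements from the point-like construction, and then conclude from the fact that $\tilde{q}=q\times\id_{W\times Q}$ is bijective on that set. The only difference is cosmetic: you locate supports via the coordinate-projection factor maps and spell out the Lusin--Souslin/Borel-isomorphism bookkeeping that the paper compresses into ``and the result follows,'' whereas the paper cites the argument of \cite[Theorem 1.18]{DPS:DynZ} applied directly to the product.
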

\begin{proof}
Let $\mu$ be a $\tilde{\zeta}$-invariant measure. Then ${\tilde{q}}^*(\mu) =\mu\circ \tilde{q}^{-1}$ is $\tilde{\varphi}$-invariant. The set $L_{\infty} \times Q\times W$ is a Borel subset of $S^d \times W \times Q$, and since $L^{\infty}$ is \mbox{$\varphi$-invariant}, $L_{\infty} \times Q\times W$ is invariant under $\tilde{\varphi}$. As in the proof of \cite[Theorem 1.18]{DPS:DynZ}, it follows that $\tilde{q}^*(\mu)(L_{\infty} \times Q\times W)=0$, and hence that  $\mu((S^d \setminus L_{\infty}) \times W \times Q) = 1$. Since the factor map $q: Z \to S^d$ is one-to-one on $S^d \setminus L_{\infty}$, we have that $\tilde{q}$ is bijective on $(S^d \setminus L_{\infty}) \times W \times Q$, and the result follows.
\end{proof}

\begin{proposition}
The minimal homeomorphism 
\[ \tilde{\zeta}: Z\times W\times Q \rightarrow Z\times W\times Q, \qquad (z, w,v) \mapsto (\zeta(z), h_{q(z)}(w,v))
\] 
induces the identity map on $K$-theory and cohomology.
\end{proposition}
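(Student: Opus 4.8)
The plan is to recognize $\tilde\zeta$ as an element of $\overline{\mathcal{S}(\zeta\times\id_{W\times Q})}$ and then to quote Proposition~\ref{alphaActZeroKtheory}. I would first note why a direct homotopy argument cannot work as stated: identifying $K^*(Z\times W\times Q)\cong K^*(W)$ via the projection (K\"unneth, $K^*(Z)\cong K^*(\mathrm{pt})$, $Q$ contractible) and restricting to a fibre $\{z_0\}\times W\times\{v_0\}$, one is reduced to showing that $w\mapsto\pi_W(h_{q(z_0)}(w,v_0))$ induces the identity on $K^*(W)$; but this map is only a \emph{self}-homotopy equivalence of $W$ (a composite of the homeomorphism $h_{q(z_0)}$ with the homotopy equivalences $W\hookrightarrow W\times Q$ and $W\times Q\to W$), and such a map need not be homotopic to $\id_W$. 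So the proof must use where $h$ comes from.

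Recall that $\tilde\varphi$ was obtained by applying \cite[Theorem~1]{GlaWei:MinSkePro} to $S^d\times W\times Q$, i.e.\ $\tilde\varphi\in\overline{\mathcal{S}(\varphi\times\id_{W\times Q})}$: there are $G_n\in\homeo_s(S^d\times W\times Q)$, say $G_n(s,w,v)=(s,g_{n,s}(w,v))$ with $s\mapsto g_{n,s}\in\homeo(W\times Q)$ continuous, with $G_n^{-1}\circ(\varphi\times\id_{W\times Q})\circ G_n\to\tilde\varphi$ in the metric $\d$; writing out the conjugate, $h^{(n)}_s:=g_{n,\varphi(s)}^{-1}\circ g_{n,s}\to h_s$ uniformly over $s\in S^d$, together with inverses. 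I would then define $G_n'\in\homeo_s(Z\times W\times Q)$ by $G_n'(z,w,v)=(z,g_{n,q(z)}(w,v))$ (a homeomorphism, since $q$ and $s\mapsto g_{n,s}$ are continuous) and use the intertwining identity $q\circ\zeta=\varphi\circ q$ of Theorem~\ref{ThmAboutZ}(\ref{FactorMap}) to compute
\[
(G_n')^{-1}\circ(\zeta\times\id_{W\times Q})\circ G_n'(z,w,v)=\bigl(\zeta(z),\,g_{n,q(\zeta(z))}^{-1}g_{n,q(z)}(w,v)\bigr)=\bigl(\zeta(z),\,h^{(n)}_{q(z)}(w,v)\bigr).
\]
Since $q(Z)\subseteq S^d$ and $h^{(n)}_s\to h_s$ uniformly over all of $S^d$ (with inverses), the left-hand side converges uniformly to $\tilde\zeta$, so $\tilde\zeta\in\overline{\mathcal{S}(\zeta\times\id_{W\times Q})}\subseteq\overline{\mathcal{O}(\zeta\times\id_{W\times Q})}$.

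Finally, $\tilde\zeta$ is minimal by Proposition~\ref{MinMapConTwo} and lies in $\overline{\mathcal{O}(\zeta\times\id_{W\times Q})}$, so Proposition~\ref{alphaActZeroKtheory} applies verbatim and yields that $\tilde\zeta$ induces the identity on both $K$-theory and \v{C}ech cohomology. (Equivalently, one reruns that proof: each $(G_n')^{-1}(\zeta\times\id_{W\times Q})G_n'$ induces the identity on $K^*(Z\times W\times Q)$ because $\zeta^*=\id$ on $K^*(Z)$ by \cite[Proposition~2.8]{DPS:DynZ}, and similarly on $H^*(Z)$ by Theorem~\ref{ThmAboutZ}(\ref{sameCohomAsPoint}), and $\tilde\zeta$ is a uniform limit of these.) The part I expect to need the most care is the second paragraph: checking that the Glasner--Weiss conjugators for $\tilde\varphi$ can indeed be taken in $\homeo_s$ and that pulling them back through $q$ reproduces $\tilde\zeta$ as a uniform limit --- the point being exactly that $q$ intertwines $\zeta$ and $\varphi$; once that is in place, the rest is a citation.
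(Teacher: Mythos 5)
Your argument is correct, but it reaches the conclusion by a genuinely different route than the paper. You exhibit $\tilde{\zeta}$ itself as an element of $\overline{\mathcal{S}(\zeta\times \id_{W\times Q})}$: since $\tilde{\varphi}$ comes from the Glasner--Weiss construction, its approximating conjugators lie in $\homeo_s(S^d\times W\times Q)$, and pulling them back through the factor map $q$ (using $q\circ\zeta=\varphi\circ q$) gives conjugates of $\zeta\times\id_{W\times Q}$ converging to $\tilde{\zeta}$, after which Proposition \ref{alphaActZeroKtheory} applies. The paper instead uses Chapman's theorem that the homeomorphism group of a compact Hilbert cube manifold is locally contractible to homotope the cocycle $h$ to a single coboundary $g^{-1}_{\varphi(\cdot)}\circ g_{(\cdot)}$, pulls that homotopy back through $q$, and concludes that $\tilde{\zeta}$ is \emph{homotopic} to a conjugate $G^{-1}\circ(\zeta\times\id_W\times\id_Q)\circ G$, so homotopy invariance together with $\zeta^*=\id$ finishes. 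What each buys: your route avoids Chapman's theorem entirely, but rests on the step already implicit in Proposition \ref{alphaActZeroKtheory}, namely that a uniform limit of homeomorphisms each acting as the identity on $K$-theory and \v{C}ech cohomology again acts as the identity (sufficiently close maps of compact spaces pull classes back equally); the paper's route yields an actual homotopy, hence triviality for every homotopy-invariant functor at once. Two minor points you already anticipate correctly: Proposition \ref{alphaActZeroKtheory} is worded for homeomorphisms ``given by Theorem \ref{GlasnerWeissToZ}'', so one should note (as you do) that its proof uses only membership in the closure, not minimality; and the claim that the Glasner--Weiss conjugators are fibrewise, with values in the path-connected group $\Gamma$, is exactly what their construction provides, so that step is sound. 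As a byproduct your argument records the structural fact $\tilde{\zeta}\in\overline{\mathcal{S}(\zeta\times\id_{W\times Q})}$, which the paper's proof does not state explicitly.
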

\begin{proof}
Using \cite[Proposition 10.5.1]{Bla:k-theory} and the fact that $\zeta^*$ is the identity on $K^*(Z)$, the statement will follow by showing that $\tilde{\zeta}$ is homotopic to a conjugate of the homeomorphism $\zeta\times \id_W \times \id_Q$. The space $S^d\times W \times Q$ is a compact Hilbert cube manifold and hence its homeomorphism group is locally contractible by the main result of \cite{Cha:HomHibCubMfd}. Using this fact together with the skew product construction, there exists a homotopy
\[ H : S^d \times [0,1] \rightarrow {\rm Homeo}(W\times Q), \]
where 
\[ H(z, 0)=h_z \hbox{ and } H(z, 1) = g^{-1}_{\varphi(z)} \circ g_z, \]
for some $g: S^d \rightarrow {\rm Homeo}(W\times Q)$. Precomposing with the factor map $\tilde{q}$ leads to
\[ \tilde{H}: Z\times  [0,1] \rightarrow {\rm Homeo}(W\times Q), \]
where 
\[ \tilde{H}(z, 0)=h_{q(z)} \hbox{ and } \tilde{H}(z, 1) = g^{-1}_{\varphi(q(z))} \circ g_{q(z)} .\]
Summarizing, we have obtained a homotopy from $\tilde{\zeta}$ to the homeomorphism
\[
(z, w) \mapsto (\zeta(z), g^{-1}_{\varphi(q(z))} \circ g_{q(z)}(w) )= (G^{-1} \circ (\zeta \times \id_W \times \id_Q) \circ G)(z,w),
\]
where the homeomorphism $G$ is determined by $g$ via $G(z,w)=(z, g_{q(z)}(w))$. 
\end{proof}

\section{The manifold case}

\subsection{Statement of the result in the manifold case}

If $M$ is a smooth closed manifold, then we let ${\rm Diff}^{\infty}(M)$ denote the collection of smooth self-diffeomorphisms on $M$. Since $M$ is compact, ${\rm Diff}^{\infty}(M)$ is a Fr\'echet Lie group, see Chapter 2 of \cite{Hirsch:DiffTop} for more details. In particular, ${\rm Diff}^{\infty}(M)$ is a complete metric space and the Baire category theorem holds. For an explicit definition of the metric $\d_{\rm{Diff}^{\infty}(M)}$ see \cite[Section 2.4]{Hirsch:DiffTop}. 

\begin{theorem} \label{mainTheoremManifoldCase}
Suppose that $M$ is a smooth closed manifold, $R_{\theta}$ is a free smooth action of $S^1$ on $M$ and $\beta: M \rightarrow M$ is a finite order $C^{\infty}$-diffeomorphism satisfying the following:
\begin{enumerate}
\item[(A)] for each $\theta \in S^1$, $\beta \circ R_{\theta} = R_{\theta} \circ \beta$ (this implies that $R_{\theta}$ gives a well-defined $S^1$-action on the quotient space $M/\beta$);
\item [(B)] the $S^1$-action on the quotient space $M/\beta$ induced by $R_{\theta}$ is free. 
\end{enumerate}
Then there exists $\alpha: M \rightarrow M$ a minimal $C^{\infty}$-diffeomorphism
\[
\alpha = \lim_{n\to \infty} H_n \circ (R_{\theta_n}\circ \beta) \circ H^{-1}_n 
\]
where
\begin{enumerate}
\item for each $n\in \N$, $H_n: M \rightarrow M$ is a $C^{\infty}$-diffeomorphism and $\theta_n \in S^1$;
\item the convergence in the limit occurs within ${\rm Diff}^{\infty}(M)$.
\end{enumerate}
Moreover, $\alpha$ is homotopic to $\beta$ and we can take $\alpha$ to be uniquely ergodic.
\end{theorem}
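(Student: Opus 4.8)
The plan is to run the Anosov--Katok ``approximation by conjugation'' method, in the smooth category, in the form used by Fathi--Herman \cite{FatHer:Diffeo}, carrying the finite-order symmetry $\beta$ along throughout. Let $k$ be the order of $\beta$. Fix a smooth probability measure $\mu$ on $M$ invariant under $R_\theta$ for all $\theta$ and under $\beta$; such a $\mu$ is obtained by averaging an arbitrary smooth volume form over $S^1$ and then over $\langle\beta\rangle$, the second averaging being legitimate because $\beta$ commutes with the $S^1$-action by (A). Since $\beta^k=\id$ and $\beta$ commutes with every $R_\theta$, one has $(R_\theta\circ\beta)^k=R_{k\theta}$; in particular $R_{p/q}\circ\beta$ is a periodic diffeomorphism for every $p/q\in\Q$, and $R_\theta\circ\beta$ preserves $\mu$ for every $\theta$.

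I would construct inductively diffeomorphisms $H_n=h_1\circ\cdots\circ h_n$, starting from $H_1=\id$, together with rationals $\theta_n=p_n/q_n$ satisfying $\gcd(p_n,q_n)=1$ and $\gcd(q_n,k)=1$; write $T_n:=R_{\theta_n}\circ\beta$ and $\alpha_n:=H_n\circ T_n\circ H_n^{-1}$. The inductive requirements are: \emph{(i)} each $h_n$ preserves $\mu$ and commutes with $T_{n-1}$ --- under the coprimality hypotheses the cyclic group $\langle T_{n-1}\rangle$ has order $kq_{n-1}$ and contains both $\beta$ and $R_{1/q_{n-1}}$, so (i) forces $h_n$ to be simultaneously $\langle R_{1/q_{n-1}}\rangle$-equivariant and $\langle\beta\rangle$-equivariant; \emph{(ii)} $d_{\mathrm{Diff}^\infty(M)}(\alpha_n,\alpha_{n-1})<2^{-n}$; \emph{(iii)} (density) $\bigcup_{j=0}^{kq_n-1}\alpha_n^{\,j}(B)$ is $2^{-n}$-dense in $M$ for every ball $B$ of radius $2^{-n}$; \emph{(iv)} (equidistribution) $\frac1{kq_n}\sum_{j=0}^{kq_n-1}\delta_{\alpha_n^{\,j}(x)}$ is within $2^{-n}$ of $\mu$, in a fixed metric on the space of Borel probability measures, uniformly in $x\in M$. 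At stage $n$ one first chooses the ``spreading'' diffeomorphism $h_n$ achieving robust forms of (iii)--(iv), and only afterwards chooses $\theta_n$ with $q_n$ large and $|\theta_n-\theta_{n-1}|$ small. The key identity making (ii) attainable is
\[
\alpha_n\circ\alpha_{n-1}^{-1}=H_n\circ R_{\theta_n-\theta_{n-1}}\circ H_n^{-1},
\]
which uses exactly that $h_n$ commutes with $T_{n-1}$; since $H_n$ is already fixed when $\theta_n$ is chosen and $R_{\theta_n-\theta_{n-1}}\to\id$ in $\mathrm{Diff}^\infty(M)$ as $\theta_n\to\theta_{n-1}$, condition (ii) holds by choosing $|\theta_n-\theta_{n-1}|$ small enough --- this is why the scheme closes up smoothly. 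The equivariance in (i) is also what keeps $\alpha_n$ of the displayed form $H_n\circ(R_{\theta_n}\circ\beta)\circ H_n^{-1}$.

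Granting the construction, $(\alpha_n)$ is Cauchy in the complete metric space $\mathrm{Diff}^\infty(M)$, so $\alpha:=\lim_n\alpha_n$ is the desired $C^\infty$-diffeomorphism. Because $h_{n+1}$ commutes with $T_n$ one has $\alpha_n=H_{n+1}\circ T_n\circ H_{n+1}^{-1}$, so $\alpha_n^{\,j}$ and $\alpha_{n+1}^{\,j}$ are conjugates, by the \emph{same} diffeomorphism $H_{n+1}$, of $T_n^{\,j}$ and $T_{n+1}^{\,j}=R_{j(\theta_{n+1}-\theta_n)}\circ T_n^{\,j}$, which differ only by a rotation; hence, taking $|\theta_{n+1}-\theta_n|$ small enough at each stage, the first $kq_n$ iterates of $\alpha$ remain $O(2^{-n})$-close to those of $\alpha_n$, with no exponential loss. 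Minimality of $\alpha$ follows from (iii) together with this shadowing and Proposition~\ref{basicMinimalEquivalent}(4). Unique ergodicity follows from (iv): $\mu$ is $\alpha_n$-invariant for every $n$, as $H_n$, $R_{\theta_n}$ and $\beta$ all preserve it, hence $\alpha$-invariant; and the shadowing turns (iv) into uniform convergence of the Birkhoff averages of every continuous $f$ along $\alpha$-orbits to the constant $\int f\,d\mu$, which is equivalent to unique ergodicity. (Should one want finitely many ergodic measures instead, one modifies the choice of $h_n$ following Windsor \cite{Wind:not_uniquely_ergo}.) Finally, arranging the bound in (ii) to make $\alpha$ $C^0$-close enough to $\alpha_1=R_{\theta_1}\circ\beta$ to be homotopic to it, and noting that $t\mapsto R_{t\theta_1}\circ\beta$ is an isotopy from $\beta$ to $\alpha_1$, we conclude that $\alpha$ is homotopic to $\beta$.

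The main obstacle is the construction of the diffeomorphisms $h_n$ meeting (i) and the robust forms of (iii)--(iv) simultaneously: one needs $\mu$-preserving maps that are $\langle T_{n-1}\rangle$-equivariant and that ``wrap'' the leaves of the foliation of $M$ by $\langle S^1,\beta\rangle$-orbits densely, with uniform control, since the $\alpha_n$-orbit of a point is the $H_n$-image of a fine net inside such a leaf. The equivariance is genuinely more delicate than in the Fathi--Herman setting because $\beta$ may have fixed points, so $M/\langle\beta\rangle$, and therefore $M/\langle T_{n-1}\rangle$, need not be a manifold. This is precisely where hypothesis (B) is used: freeness of the induced $S^1$-action on $M/\beta$ forces the $\langle S^1,\beta\rangle$-orbit foliation --- and the disintegration of $\mu$ over its leaf space --- to be ``clean'' transverse to the circle directions, which provides the room to carry out the equivariant wrapping and an accompanying equivariant Moser argument. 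Concretely I would first quotient by the free $\Z/q_{n-1}$-action of $\langle R_{1/q_{n-1}}\rangle$, obtaining a closed manifold with a residual free $S^1$-action and a commuting order-$k$ map $\bar\beta$; build there, using (B), a $\bar\beta$-equivariant, measure-preserving ``wrapping'' diffeomorphism; and lift it to $h_n$. The remaining interplay of the density balls $B_n$ and the constants is routine bookkeeping.
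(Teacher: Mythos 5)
Your overall strategy (an Anosov--Katok induction rather than the paper's Baire-category argument in $\overline{\mathcal{O}^{\infty}(S^1,\beta)}$) could in principle work, but your arithmetic choice $\gcd(q_n,k)=1$ introduces a fatal structural constraint. With $\gcd(q_{n-1},k)=1$ the cyclic group $\langle T_{n-1}\rangle$ contains $\beta$ itself (as you note), so requiring $h_n$ to commute with $T_{n-1}$ forces every $h_n$, hence every $H_n$, to commute with $\beta$; consequently each $\alpha_n=H_n\circ(R_{\theta_n}\circ\beta)\circ H_n^{-1}$ commutes with $\beta$, and so does the limit $\alpha$. Then each set $\mathrm{Fix}(\beta^l)$ is closed and $\alpha$-invariant. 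Hypotheses (A) and (B) do \emph{not} force $\beta$ to act freely (the paper stresses this, and its examples use, e.g., a reflection of $S^q$ in Example \ref{manifoldGeneralProductExample}, so that $\mathrm{Fix}(\beta)$ is a nonempty proper subset, e.g. $S^1\times S^{q-1}\subset S^1\times S^q$). In that case no diffeomorphism commuting with $\beta$ can be minimal, so your scheme provably cannot produce the $\alpha$ of Theorem \ref{mainTheoremManifoldCase}; no construction of the ``wrapping'' maps $h_n$, however clever, can repair this, because the obstruction lives in the limit. The paper avoids exactly this trap by a different arithmetic: it takes $\mathrm{ord}(\beta)\mid q$ and $\gcd(p,q)=1$ and asks the conjugator $H$ to commute only with $R_{p/q}\circ\beta$, not with $\beta$. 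Then $\langle R_{p/q}\circ\beta\rangle$ is cyclic of order $q$, contains no nontrivial power of $\beta$, and acts freely on $M$ precisely because of (A) and (B); the quotient is a closed manifold carrying a locally free $S^1$-action, Fathi--Herman's wrapping lemma (Lemma \ref{FathiHermanLemma1}) applies there, and the lift gives Lemma \ref{FathiHermanLemma2}. This is where (B) is really used, and it is the step your proposal replaces with the vague ``equivariant Moser/wrapping'' plan that cannot succeed as formulated.

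There is a second, related gap in how you would conclude minimality. Your condition (iii) asks that finitely many $\alpha_n$-images of every small ball be $2^{-n}$-dense; even granting perfect shadowing, in the limit this only shows that proper closed $\alpha$-invariant sets have empty interior (a topologically mixing Anosov diffeomorphism satisfies such ball-density statements but is far from minimal), whereas Proposition \ref{basicMinimalEquivalent}(4) requires that finitely many images of each open set actually \emph{cover} $M$ --- which is what the paper's sets $E_U$ encode. The standard Anosov--Katok route to minimality instead controls the $2^{-n}$-density of the first $kq_n$ orbit points of \emph{every} point of $M$; but under your forced $\beta$-equivariance this is unachievable, since for $x\in\mathrm{Fix}(\beta)$ the $T_n$-orbit of $x$ lies on the single circle $S^1\cdot x\subset\mathrm{Fix}(\beta)$ and $H_n$ preserves $\mathrm{Fix}(\beta)$, so these orbit segments can never be dense in $M$. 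So the two defects reinforce each other: the version of (iii) you can hope to arrange is too weak to give minimality, and the version that would give minimality is incompatible with the equivariance your coprimality condition imposes. The remaining ingredients of your sketch (the identity $\alpha_n\circ\alpha_{n-1}^{-1}=H_n\circ R_{\theta_n-\theta_{n-1}}\circ H_n^{-1}$, the smooth closing-up by choosing $\theta_n$ after $H_n$, the homotopy to $\beta$ via $C^0$-closeness and the isotopy $t\mapsto R_{t\theta_1}\circ\beta$, and the unique-ergodicity bookkeeping) are reasonable, but they sit on top of a framework whose equivariance constraint and minimality criterion must both be replaced, essentially along the lines of the paper's Lemma \ref{FathiHermanLemma2} and its covering sets $E_U$.
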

Before giving the proof, we discuss Conditions (A) and (B), a class of examples satisfying the conditions in the theorem, and a number of more specific examples. 

It is worth noting that Conditions (A) and (B) together imply that $R_{\theta}$ is a free action on $M$. However, we have included the assumption that $R_{\theta}$ is a free action on $M$ in the statement of the theorem to emphasize the connection with \cite[Th\'eor\`eme 1]{FatHer:Diffeo} (see the introduction for more on this connection). Assuming Condition (A), one way for Condition (B) to hold is if the action of $S^1 \times \Z/{\rm ord}(\beta) \Z$ induced by $R_{\theta}$ and $\beta$ is free. However, Condition (B) can be satisfied when this action is not free. In particular, the $\Z/{\rm ord}(\beta) \Z$-action generated by $\beta$ need not be free. The reader might find it useful to consider the special case when $M=S^1 \times N$ with the action of $S^1$ given by rotation on the $S^1$-factor and acts trivially on the $N$-factor, which is a special case of the next example. For an example that does {\bf not} satisfy Condition (B), one can take $M=S^1$ with the $S^1$-action given by rotation and $\beta$ given by rotation by $\pi$. 

\begin{example} \label{manifoldGeneralProductExample} 
Let $N_1$ be a smooth closed manifold admiting a free $S^1$-action $R_{N_1, \theta}$, and let $N_2$ be a smooth closed manifold with a finite order \mbox{$C^{\infty}$-diffeomorphism} $\beta_{N_2}$. Then $R_{\theta}:=R_{N_1, \theta} \times \id_{N_2}$ defines an $S^1$-action on $N_1 \times N_2$ which, together with the finite order $C^{\infty}$-diffeomorphism, $\beta:= \id_{N_1} \times \beta_{N_2}$, satisfies conditions (A) and (B) of Theorem~\ref{mainTheoremManifoldCase}. Many examples can be constructed from this setup.
\end{example}

\begin{example}
Suppose that $1\le p < q$ are odd integers and consider $M=S^p \times S^q$. The $K$-theory of $M$ is given by 
\[
K^0(M) \cong (H^0(S^p) \otimes H^0(S^q)) \oplus (H^p(S^p) \otimes H^q(S^q)) \cong \Z \oplus \Z,
\]
and
\[
K^1(M) \cong (H^0(S^p) \otimes H^q(S^q)) \oplus (H^p(S^p) \otimes H^0(S^q)) \cong \Z \oplus \Z.
\]
The $\Z$-grading on cohomology implies that induced map on $K^0(M) \oplus K^1(M)$ of a homeomorphism on $M$ is one of the following: 
\begin{enumerate}
\item the identity, 
\item $\id \oplus -\id$ in degree zero and $\id \oplus -\id$ in degree one,
\item $\id \oplus -\id$ in degree zero and $-\id \oplus \id$ in degree one,
\item $\id \oplus \id$ in degree zero and $-\id \oplus -\id$ in degree one.
\end{enumerate}
The last of these possible maps on $K$-theory cannot occur for a minimal homeomorphism, because a homeomorphism inducing this map has a fixed point by the Lefschetz fixed point theorem.

We show that the other three possible induced maps can occur in the minimal case. Since $M$ admits a free $S^1$-action, the result of Fathi--Herman discussed in the introduction implies that there exists a minimal diffeomorphism that acts an the identity on $K$-theory. Next, since $S^p$ admits a free $S^1$-action and there is an order two orientation-reversing diffeomeomorphism $\beta: S^q\rightarrow S^q$, the previous theorem implies that there is a minimal homeomorphism that acts on $K$-theory as $\id \oplus -\id$ in degree zero and $-\id \oplus \id$ in degree one. By reversing the roles of $S^p$ and $S^q$ we can also get a minimal diffeomorphism that acts on $K$-theory as $\id \oplus -\id$ in degree zero and $\id \oplus -\id$ in degree one.
\end{example}

\begin{example}
Let $q$ be an odd positive integer and consider $S^q\times \R^n/\Z^n$. Take $B$ an $n$ by $n$ matrix with integer entries that satisfies 
\[ {\rm det}(B)=\pm 1 \hbox{ and }B^L=I \hbox{ for some }L\ge 1. \] 
Note that if a matrix satisfies these conditions, then its characteristic polynomial will be a factor of $z^N-1$ for some $N\in \N$. Many examples can be constructed using this fact. For example, 
\[
B=\left[ \begin{array}{ccc}0 & 0 & -1 \\ 1 & 0 & -1 \\ 0 & 1 & -1 \end{array}\right].
\]
Returning to the general case, given $B$ we define a map on $\R^n/\Z^n$ by $[v] \mapsto [Bv]$ where $v\in \R^n$ and $[v]$ denotes the associated element in $\R^n/\Z^n$. The condition ${\rm det}(B)=\pm 1$ implies that this map is a diffeomorphism and the condition $B^L=I$ implies that it is finite order. Since $S^1$ acts freely on $S^q$ we have the setup of Example \ref{manifoldGeneralProductExample} where $N_1=S^q$, $N_2=\R^n/\Z^n$ and $\beta_{N_2}$ is the finite order diffeomorphism obtained from $B$.

If $q=1$, so that we are considering the $(n+1)$-torus, then there is a minimal diffeomorphism that has action on $H^1(S^1\times \R^n/\Z^n) \cong \Z^{n+1}$ given by $\id \oplus B$.

If $q\ge 3$, then there is a minimal diffeomorphism on $S^q\times \R^n/\Z^n$ whose induced map on $H^1(S^q\times \R^n/\Z^n)\cong H^0(S^q)\otimes H^1(\R^n/\Z^n) \cong \Z^n$ is given by $B$.

Note that if $B\neq I$ then minimal diffeomorphisms obtained via this construction cannot be homotopic to the identity since the map they induced on $K$-theory is not the identity.
\end{example}

\subsection{Proof in the manifold case}

The goal of this section is a proof of Theorem \ref{mainTheoremManifoldCase}. We begin with a number of lemmas. The first is \cite[4.11]{FatHer:Diffeo} and second is based on \cite[4.12]{FatHer:Diffeo}. Recall that the definition of locally free was given in Definition \ref{defLocFree}.

\begin{lemma} \label{FathiHermanLemma1}
Suppose that $M$ is a smooth closed manifold admitting a smooth locally free action of $S^1$. Then, for any non-empty open set $U \subseteq M$, there exists a \mbox{$C^{\infty}$-diffeomorphism} $H$ such that 
\begin{enumerate}
\item $H^{-1}(U)$ meets every orbit of the $S^1$-action;
\item $H$ is homotopic to the identity on $M$.
\end{enumerate}
\end{lemma}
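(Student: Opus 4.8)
The plan is to deduce the lemma from two standard ingredients: a flow-box covering of $M$, and the uniqueness of smoothly embedded top-dimensional disks up to ambient isotopy. Write $m = \dim M$; we assume $M$ connected, as in the applications. Recall that the action being locally free is equivalent to its infinitesimal generator $X$ being a nowhere-vanishing vector field, since a zero of $X$ is exactly a point fixed by all of $S^1$.

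\emph{Step 1 (a compact set meeting every orbit).} Since $X$ vanishes nowhere, the flow-box theorem provides finitely many charts $V_1,\dots,V_N$ covering $M$, each with coordinates $V_i \cong (-2,2)\times B^{m-1}$ straightening $X$ to $\partial_t$, chosen so that the concentric half-size sub-boxes $V_i'$ still cover $M$. Let $\Delta_i \subseteq V_i$ be the closed central slice $\{0\}\times \overline{B^{m-1}}$, a smoothly embedded closed $(m-1)$-disk. Following the $t$-direction inside $V_i$ shows that every orbit meeting $V_i'$ also meets $\Delta_i$; since the $V_i'$ cover $M$, the compact set $\Delta := \Delta_1 \cup \dots \cup \Delta_N$ meets every orbit of the action.

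\emph{Step 2 (engulf $\Delta$ in a ball).} Next I would produce a smoothly embedded closed $m$-ball $E \subseteq M$ with $\Delta \subseteq E$; such an $E$ automatically meets every orbit. Thicken each $\Delta_i$ slightly in the transverse direction to a smoothly embedded $m$-ball $T_i \supseteq \Delta_i$, and amalgamate the $T_i$ into a single ball by iterated boundary connected sums along thin tubes running through $M$, using that $M$ is connected (the case $m=1$ is trivial, since then the orbit space is a point and the statement reduces to $U\neq\emptyset$). A boundary connected sum of two $m$-balls along disks in their boundary spheres is again an $m$-ball, so the resulting $E$ is an embedded closed $m$-ball containing every $\Delta_i$. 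I expect this to be the main obstacle: for a non-trivial $S^1$-bundle one cannot cover $M$ by saturations of finitely many pairwise disjoint slices (this is obstructed by the Euler class of the bundle), so the disks $\Delta_i$ and their thickenings genuinely overlap, and some care — general position, plus bookkeeping so that at stage $k$ one only adjoins the part of $\Delta_{k+1}$ lying over orbits not yet met by the ball built so far — is needed to carry out the connected sums cleanly. This is the content of \cite[4.11]{FatHer:Diffeo}, whose argument applies verbatim in the locally free case.

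\emph{Step 3 (push the ball into $U$).} Fix a smoothly embedded closed $m$-ball $D_0 \subseteq U$, say a small coordinate ball. By the disk theorem (any two smoothly embedded closed $m$-disks in a connected smooth $m$-manifold are carried onto one another by an ambient isotopy starting at the identity; see, for instance, \cite{Hirsch:DiffTop}), there is an isotopy $(\Phi_t)_{t\in[0,1]}$ of $M$ with $\Phi_0 = \id_M$ and $\Phi_1(E) = D_0$. Put $H := \Phi_1$, a $C^{\infty}$-diffeomorphism. Then $E \subseteq H^{-1}(D_0) \subseteq H^{-1}(U)$, so $H^{-1}(U)$ meets every orbit, giving (1); and $H$, being isotopic to the identity, is homotopic to it, giving (2). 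Since the lemma is quoted from \cite[4.11]{FatHer:Diffeo}, one may instead simply cite that reference; the above records the shape of its proof.
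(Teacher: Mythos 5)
The paper offers no argument for this lemma at all: it is quoted verbatim as Corollaire 4.11 of \cite{FatHer:Diffeo}, so your closing remark that one may simply cite that reference is exactly what the paper does. The problem lies in your attempted reconstruction, which has a genuine gap at Step 2 and misattributes its strategy to Fathi--Herman. The slices $\Delta_i$ are codimension-one disks, so two of them in general position meet in a nonempty set of dimension $m-2$ that cannot be removed by a small perturbation; a boundary connected sum, however, requires the summands to be disjoint, and your proposed bookkeeping is not an argument: the part of $\Delta_{k+1}$ lying over orbits not yet met by the ball built so far is only a relatively open subset of $\Delta_{k+1}$ (not a disk, possibly with many components), and after thickening, the resulting pieces must in turn be kept disjoint from one another, which is the same difficulty again. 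Moreover, your justification for insisting on a single ball --- that the Euler class obstructs covering $M$ by saturations of finitely many pairwise disjoint slices --- is incorrect: the Euler class obstructs a single global section, not finitely many disjoint local ones, and Fathi--Herman's Corollaire 4.11 produces precisely such a family: finitely many pairwise disjoint closed $(m-1)$-disks transverse to the orbit foliation whose saturated interiors cover $M$, together with a diffeomorphism homotopic to the identity carrying the disjoint union of thickened slabs $D_j\times[0,\epsilon]$ into $U$ (this is recorded as properties (I)--(IV) in the proof of Lemma \ref{FathiHermanLemmaToNopenPart1}). So the engulfing-in-one-ball step is neither proved by you nor ``the content of'' the cited result.

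If you want a self-contained argument along your lines, the efficient repair is to drop the single ball: once one has finitely many pairwise disjoint transverse slabs whose saturated interiors cover $M$ (the real crux, and exactly what \cite{FatHer:Diffeo} establishes), a finite disjoint union of closed balls can always be carried into the nonempty open set $U$ by an ambient isotopy --- shrink each slab inside a chart and drag the resulting small balls along disjoint paths into $U$ --- which gives $H$ isotopic, hence homotopic, to the identity. Your Step 1 is fine, and Step 3 is fine for a single ball (modulo the orientation hypothesis in the disk theorem), but the disjointness of the transverse slices is precisely the point your sketch does not establish; as written the proof is incomplete unless one falls back on citing \cite{FatHer:Diffeo}, as the paper does.
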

\begin{lemma} \label{FathiHermanLemma2}
Suppose that $M$ is a smooth closed manifold, $R_{\theta}$ is a smooth free action of $S^1$ on $M$ and $\beta: M \rightarrow M$ is a finite order $C^{\infty}$-diffeomorphism satisfying the following:
\begin{enumerate}
\item[(A)] for each $\theta \in S^1$, $\beta \circ R_{\theta} = R_{\theta} \circ \beta$ (this implies that $R_{\theta}$ gives a well-defined $S^1$-action on the quotient space $M/\beta$);
\item [(B)] the $S^1$-action on the quotient space $M/\beta$ induced by $R_{\theta}$ is free. 
\end{enumerate}
Let $\frac{p}{q}$ be a rational number for which the order of $\beta$ divides $q$ and gcd$(p,q)=1$. Then given a non-empty open set $U \subseteq M$, there exists a $C^{\infty}$-diffeomorphism $H$ such that 
\begin{enumerate}
\item $H \circ R_{\frac{p}{q}} \circ \beta \circ H^{-1}=R_{\frac{p}{q}} \circ \beta$;
\item If $m \in M$, then there exists $\theta\in S^1$ and $l=0, \ldots {\rm ord}(\beta)-1$ such that 
\[
R_{\theta}(\beta^l(m)) \in H^{-1}(U);
\]
\item $H$ is homotopic to the identity.
\end{enumerate}
\end{lemma}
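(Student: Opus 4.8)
The plan is to reduce to Lemma~\ref{FathiHermanLemma1} by quotienting $M$ by the finite cyclic group generated by $T:=R_{p/q}\circ\beta$ and then lifting the resulting diffeomorphism back up. Write $\Gamma\subseteq\mathrm{Diff}^{\infty}(M)$ for the subgroup generated by $\{R_{\theta}\}_{\theta\in S^1}$ together with $\beta$; by~(A) it is abelian, it equals $\langle\{R_{\theta}\}_{\theta},T\rangle$ (since $\beta=R_{-p/q}\circ T$), and every element of it has the form $R_{\theta}\circ\beta^{l}$ with $0\le l\le\mathrm{ord}(\beta)-1$. Because $\mathrm{ord}(\beta)\mid q$ and $\gcd(p,q)=1$ we have $T^{q}=R_{p}\circ\beta^{q}=\id$, so $\langle T\rangle$ is finite, of some order $k\mid q$. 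The central step is to prove that $\langle T\rangle$ \emph{acts freely on $M$}: if $T^{j}(m)=m$ then $R_{jp/q}(\beta^{j}(m))=m$, so $\beta^{j}(m)$ lies in the $\beta$-orbit of $m$ and simultaneously $\beta^{j}(m)=R_{-jp/q}(m)$; hence $R_{-jp/q}$ fixes the $\beta$-orbit of $m$ in $M/\beta$, and~(B) forces $jp/q\in\Z$, i.e.\ $q\mid j$ (as $\gcd(p,q)=1$), i.e.\ $T^{j}=\id$ (as $k\mid q$). Consequently $M_{T}:=M/\langle T\rangle$ is a smooth closed manifold and $\pi_{T}\colon M\to M_{T}$ is a regular $k$-sheeted smooth covering.

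Since $T$ commutes with every $R_{\theta}$, the $S^1$-action descends to a smooth action on $M_{T}$, and the same computation shows each of its point-stabilizers lies in the finite group $\tfrac1q\Z/\Z$, so this action is locally free. Now observe that conclusion~(2) of the lemma says exactly that $\Gamma\cdot H^{-1}(U)=M$; and if $H$ commutes with $T$, then, writing $\bar H$ for the diffeomorphism of $M_{T}$ that $H$ induces, one checks $\pi_{T}(H^{-1}(U))=\bar H^{-1}(\pi_{T}(U))$, whence $\pi_{T}(\Gamma\cdot H^{-1}(U))=S^1\cdot\bar H^{-1}(\pi_{T}(U))$, and---since $\Gamma\cdot H^{-1}(U)$ is $\langle T\rangle$-saturated---(2) is equivalent to $\bar H^{-1}(\pi_{T}(U))$ meeting every $S^1$-orbit of $M_{T}$. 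So I would apply Lemma~\ref{FathiHermanLemma1} to the closed manifold $M_{T}$ with its locally free $S^1$-action and the non-empty open set $\pi_{T}(U)$, obtaining $\bar H\in\mathrm{Diff}^{\infty}(M_{T})$ homotopic to the identity with $\bar H^{-1}(\pi_{T}(U))$ meeting every orbit, and then lift $\bar H$ through $\pi_{T}$. Since $\bar H$ is homotopic to the identity it induces the identity on $\pi_{1}$, so $\bar H\circ\pi_{T}$ lifts to a smooth map $H\colon M\to M$ with $\pi_{T}\circ H=\bar H\circ\pi_{T}$; as the lift of a $k$-sheeted covering through the $k$-sheeted covering $\pi_{T}$, this $H$ is a $1$-sheeted covering, i.e.\ a $C^{\infty}$-diffeomorphism. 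Lifting a smooth homotopy $\id_{M_{T}}\simeq\bar H$ based at $\id_{M}$ shows $H$ is homotopic to $\id_{M}$, and uniqueness of lifts forces $g^{-1}\circ H\circ g=H$ for every deck transformation $g$, in particular $H\circ T=T\circ H$. Then~(1) holds because $H\circ T\circ H^{-1}=T$,~(3) holds by construction, and~(2) holds by the equivalence above. (Alternatively, if the diffeomorphism of Lemma~\ref{FathiHermanLemma1} is realized as the time-one map of a smooth ambient isotopy, one may instead pull its generating time-dependent vector field back along $\pi_{T}$ to a $\langle T\rangle$-invariant vector field on $M$ and integrate.)

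The main obstacle---really the only step that is not bookkeeping---is the freeness of $\langle T\rangle$ on $M$: this is precisely what makes $M_{T}$ a manifold, so that Lemma~\ref{FathiHermanLemma1} applies, and it is where all four hypotheses~(A),~(B), $\mathrm{ord}(\beta)\mid q$ and $\gcd(p,q)=1$ get used. Everything else is the interplay of the three mutually commuting symmetries $S^1$, $\beta$ and $H$ together with elementary covering-space theory; the one technical point to watch is that the homotopy (or ambient isotopy) supplied by Lemma~\ref{FathiHermanLemma1} be taken smooth, so that the lifted $H$ is again a $C^{\infty}$-diffeomorphism.
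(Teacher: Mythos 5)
Your proposal is correct and follows essentially the same route as the paper: quotient $M$ by the finite cyclic group generated by $R_{p/q}\circ\beta$ (whose freeness is exactly where (A), (B), $\mathrm{ord}(\beta)\mid q$ and $\gcd(p,q)=1$ enter), apply Lemma \ref{FathiHermanLemma1} to the induced locally free $S^1$-action on the quotient, and lift the resulting diffeomorphism back through the covering using that it is homotopic to the identity. The only differences (sheet-counting versus lifting $\bar H^{-1}$ to see that $H$ is invertible, and the saturation reformulation of conclusion (2)) are cosmetic.
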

\begin{proof}
Fix a non-empty open set $U\subseteq M$.

Let $G$ be the group generated by $R_{\frac{p}{q}} \circ \beta$. Since $R_{\frac{p}{q}} \circ \beta=\beta \circ R_{\frac{p}{q}}$ and the order of $\beta$ divides $q$, $G$ is finite. Moreover, the condition gcd$(p,q)=1$ together with (B) implies that $G$ is the cyclic group of order $q$. 

We let $G$ act on $M$ (via $(R_{\frac{p}{q}} \circ \beta)^k$) and will prove this action is free. Suppose that for $k=0, \ldots, q-1$, $(R_{\frac{p}{q}} \circ \beta)^k(m)=m$. Then
\[
m=(R_{\frac{p}{q}} \circ \beta)^k(m)=R_{\frac{kp}{q}}( \beta^k(m)),
\]
and assumption (B) implies that $\frac{kp}{q}$ is an integer and $k$ is zero or divides the order of $\beta$. Since $k=0, \ldots, q-1$ and gcd$(p,q)=1$, the first of these conditions implies that $k=0$. Hence the action of $G$ on $M$ is free and  $M/G$ is a smooth closed manifold. Furthermore, the quotient map $\pi : M \rightarrow M/G$ is a covering map.

Let $S^1$ act on $M/G$ via $\theta \cdot [m] := [ R_{\theta}(m)]$. We note that since for each $\theta \in S^1$, $\beta \circ R_{\theta} = R_{\theta} \circ \beta$, this action is well defined. Furthermore, since $S^1$ acts freely on $M$ and $G$ is finite, the action of $S^1$ on $M/G$ is locally free.

Applying Lemma \ref{FathiHermanLemma1} to $M/G$ and the open set $\pi(U)$ gives $\bar{H}: M/G \rightarrow M/G$ a $C^{\infty}$-diffeomorphism such that
\begin{enumerate}[resume]
\item $\bar{H}^{-1}(\pi(U))$ meets every $S^1$-orbit, and
\item $\bar{H}$ is homotopic to the identity.
\end{enumerate}
Using (5), we are able to prove that $\bar{H}$ has a lift to a $C^{\infty}$-diffeomorphism $H: M \rightarrow M$. The details are as follows. Consider the map $\bar{H} \circ \pi : M \rightarrow M/G$. Since $\bar{H}$ is homotopic to the identity, the map induced by $\bar{H} \circ \pi$ at the level of fundamental groups is given by $\pi_*$, and hence $\bar{H}$ has a unique continuous lift, $H: M \rightarrow M$. It follows that $H$ is smooth since being smooth is a local property and both $\pi$ and $\bar{H}$ are smooth. Finally, the same argument can be applied to $\bar{H}^{-1} \circ \pi$ to obtain a unique lift of $\bar{H}^{-1}: M/G \rightarrow M/G$. The uniqueness of lifts then implies $H$ is a diffeomorphism (its inverse is the unique lift of $\bar{H}^{-1}$). 

The proof will now be completed by showing that $H$ satisfies the (1), (2) and (3) in the statement of the Theorem.

For (1), by the definition of the lift, we have that $H$ commutes with the action of $G$. In particular,
\[
H \circ (R_{\frac{p}{q}} \circ \beta) \circ H^{-1}= (R_{\frac{p}{q}} \circ \beta) \circ H \circ H^{-1}=(R_{\frac{p}{q}} \circ \beta),
\]
as required.

For (2), let $m \in M$. Since $\bar{H}^{-1}(\pi(U))$ meets every $S^1$-orbit in $M/G$, there exists $\theta\in S^1$ such that $[R_{\theta}(m)] \in \pi(U)$. Using the definition of covering map in the case of $\pi : M \rightarrow M/G$, there exists $g \in G$ such that $g(R_{\theta}(m))\in U$. By the definition of $G$ there exists $k=0, \ldots, q-1$ such that $g=(R_{\frac{p}{q}} \circ \beta)^k$. Hence
\[
R_{\theta+ \frac{kp}{q}}(\beta^k(m))=(R_{\frac{p}{q}} \circ \beta)^k(R_{\theta}(m))\in U,
\]
as required.
For (3), we note that since $\bar{H}$ is homotopic to the identity, then so is its lift $H$.
\end{proof}
We will now prove Theorem \ref{mainTheoremManifoldCase}, which we restate for the reader's convenience:
\begin{theorem*}[Theorem~\ref{mainTheoremManifoldCase}]  
Suppose that $M$ is a smooth closed manifold, $R_{\theta}$ is a smooth free action of $S^1$ on $M$ and $\beta: M \rightarrow M$ is a finite order $C^{\infty}$-diffeomorphism satisfying the following
\begin{enumerate}
\item[(A)] for each $\theta \in S^1$, $\beta \circ R_{\theta} = R_{\theta} \circ \beta$ (this implies that $R_{\theta}$ gives a well-defined $S^1$-action on the quotient space $M/\beta$);
\item [(B)] the $S^1$-action on the quotient space $M/\beta$ induced by $R_{\theta}$ is free. 
\end{enumerate}
Then there exists $\alpha: M \rightarrow M$ a minimal $C^{\infty}$-diffeomorphism
\[
\alpha = \lim_{n\to \infty} H_n \circ (R_{\theta_n}\circ \beta) \circ H^{-1}_n,
\]
where
\begin{enumerate}
\item for each $n\in \N$, $H_n: M \rightarrow M$ is a $C^{\infty}$-diffeomorphism and $\theta_n \in S^1$;
\item the convergence in the limit occurs within ${\rm Diff}^{\infty}(M)$.
\end{enumerate}
Moreover, $\alpha$ is homotopic to $\beta$ and we can take $\alpha$ to be uniquely ergodic.
\end{theorem*}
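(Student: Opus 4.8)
\smallskip
\noindent\textbf{Strategy.}
The plan is to realise $\alpha$ as a point of a complete metric space of diffeomorphisms and to run a Baire category argument there, in the spirit of the Anosov--Katok fast‑approximation method used by Fathi--Herman. Let
\[
\mathcal{C}:=\overline{\bigl\{\,H\circ (R_{\theta}\circ\beta)\circ H^{-1}\ :\ H\in{\rm Diff}^{\infty}(M)\ \text{homotopic to}\ \id_M,\ \theta\in\Q/\Z\,\bigr\}},
\]
the closure taken inside ${\rm Diff}^{\infty}(M)$. Since ${\rm Diff}^{\infty}(M)$ is a complete metric space, so is $\mathcal{C}$, hence $\mathcal{C}$ is a Baire space. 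By construction each $f\in\mathcal{C}$ is a $C^{\infty}$‑limit $f=\lim_n H_n\circ (R_{\theta_n}\circ\beta)\circ H_n^{-1}$ of the type demanded in the statement; moreover each such conjugate is homotopic to $R_{\theta_n}\circ\beta$ (conjugation by $H_n\simeq\id_M$ preserves homotopy classes), hence homotopic to $\beta$ (join $\theta_n$ to $0$ inside $S^1$), so, since $C^{0}$‑close self‑maps of $M$ are homotopic, \emph{every} element of $\mathcal{C}$ is homotopic to $\beta$. It remains to exhibit a single $\alpha\in\mathcal{C}$ that is minimal and uniquely ergodic, and I would obtain both as residual conditions.

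\noindent\textbf{Minimality as a residual condition.}
Fix a countable basis $\{U_i\}_{i\in\N}$ of nonempty open subsets of $M$, and for $i,N\in\N$ let $V_{i,N}$ be the set of $f\in\mathcal{C}$ with $U_i\cup f(U_i)\cup\cdots\cup f^{N}(U_i)=M$, and put $\mathcal{D}_i:=\bigcup_{N}V_{i,N}$. Each $V_{i,N}$ is open in $\mathcal{C}$: the set $\{(f,x)\in{\rm Diff}^{\infty}(M)\times M: f^{-k}(x)\in U_i\ \text{for some}\ 0\le k\le N\}$ is open, and since $M$ is compact the set of $f$ with $\{f\}\times M$ contained in it is open, by the tube lemma. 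By Proposition~\ref{basicMinimalEquivalent}(4), applied basis element by basis element, an element of $\mathcal{C}$ is minimal if and only if it lies in $\bigcap_{i}\mathcal{D}_i$. So it is enough to prove that each (open) $\mathcal{D}_i$ is \emph{dense} in $\mathcal{C}$; the Baire property of $\mathcal{C}$ then produces a dense $G_{\delta}$ of minimal diffeomorphisms.

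\noindent\textbf{The approximation step (the crux).}
I would prove density of $\mathcal{D}_i$ by approximating the dense subclass of $\mathcal{C}$ consisting of conjugates $g_0=H_0\circ (R_{p_0/q_0}\circ\beta)\circ H_0^{-1}$ with $H_0\simeq\id_M$, $\gcd(p_0,q_0)=1$ and ${\rm ord}(\beta)\mid q_0$ (such rationals are dense in $\Q/\Z$, so this subclass is dense in $\mathcal{C}$). Given such a $g_0$, a basis element $U=U_i$, and $\varepsilon>0$: apply Lemma~\ref{FathiHermanLemma2} to $p_0/q_0$ and the open set $H_0^{-1}(U)$ to obtain $h\simeq\id_M$ commuting with $R_{p_0/q_0}\circ\beta$ and such that the orbit of every point under $\langle R_{\theta}\,(\theta\in S^1),\,\beta\rangle$ meets $h^{-1}(H_0^{-1}(U))$. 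Put $H_1:=H_0\circ h$ (still $\simeq\id_M$) and $g_1:=H_1\circ (R_{p_1/q_1}\circ\beta)\circ H_1^{-1}$ for a rational $p_1/q_1$ with $\gcd(p_1,q_1)=1$ and ${\rm ord}(\beta)\mid q_1$ to be chosen. Using that $h$ commutes with $R_{p_0/q_0}\circ\beta$,
\[
g_1\circ g_0^{-1}=H_0\circ\bigl(h\circ R_{\,p_1/q_1-p_0/q_0}\circ h^{-1}\bigr)\circ H_0^{-1},
\]
which tends to $\id_M$ in ${\rm Diff}^{\infty}(M)$ as $p_1/q_1\to p_0/q_0$ (conjugation by the \emph{fixed} maps $h$ and $H_0$ is continuous), so $d(g_0,g_1)<\varepsilon$ once $p_1/q_1$ is close enough to $p_0/q_0$. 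For the covering property I would first upgrade Lemma~\ref{FathiHermanLemma2}(2) to a uniform statement by a compactness argument: there is $\delta>0$ such that for every $m\in M$ some $\delta$‑ball about a point $R_{\theta}(\beta^{l}(m))$ lies in $h^{-1}(H_0^{-1}(U))$. Then, writing $(R_{p_1/q_1}\circ\beta)^{k}=R_{kp_1/q_1}\circ\beta^{k}$ and using $\gcd(p_1,q_1)=1$ and ${\rm ord}(\beta)\mid q_1$, the residue set $\{\,kp_1/q_1\bmod 1 : k\equiv l \pmod{{\rm ord}(\beta)}\,\}$ is $({\rm ord}(\beta)/q_1)$‑dense in $S^1$, so once $q_1$ is large relative to $\delta$ and to the modulus of continuity of the $S^1$‑action, the \emph{finite} $\langle R_{p_1/q_1}\circ\beta\rangle$‑orbit of every point already meets $h^{-1}(H_0^{-1}(U))$. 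Applying $H_1$, this says $\bigcup_{k=0}^{q_1-1}g_1^{k}(U)=M$, i.e.\ $g_1\in V_{i,q_1-1}\subseteq\mathcal{D}_i$. Since $q_1$ may be taken arbitrarily large while $p_1/q_1$ stays arbitrarily close to $p_0/q_0$, the two requirements are compatible, so $\mathcal{D}_i$ is dense.

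\noindent\textbf{Unique ergodicity and conclusion.}
Since $f\mapsto\{\,f\text{-invariant Borel probability measures}\,\}$ is upper semicontinuous into the weak‑$*$ compact set of probability measures on $M$, for a countable dense family $\{\phi_j\}\subset C(M)$ and $m\in\N$ the set $W_{j,m}$ of $f\in\mathcal{C}$ whose invariant measures all agree on $\phi_j$ to within $1/m$ is open, and the uniquely ergodic elements of $\mathcal{C}$ form the $G_{\delta}$ set $\bigcap_{j,m}W_{j,m}$. To make each $W_{j,m}$ dense I would reinforce the approximation step as in \cite[\S4]{FatHer:Diffeo}: the conjugating diffeomorphism $h$ can be chosen so that, additionally, the averages of $\phi_j\circ H_0$ over the circle orbits of $h$ are almost independent of the orbit; since the averages of $\phi_j$ over the finite $\langle R_{p_1/q_1}\circ\beta\rangle$‑orbits converge (uniformly, by equicontinuity) to those circle‑orbit averages as $q_1\to\infty$, the periodic‑orbit averages $\tfrac1{q_1}\sum_{k=0}^{q_1-1}\phi_j(g_1^{k}(x))$ become nearly constant in $x$, which forces all $g_1$‑invariant measures to agree on $\phi_j$. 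A final application of the Baire theorem in $\mathcal{C}$ then yields $\alpha\in\bigl(\bigcap_i\mathcal{D}_i\bigr)\cap\bigl(\bigcap_{j,m}W_{j,m}\bigr)$; such an $\alpha$ is a minimal, uniquely ergodic $C^{\infty}$‑diffeomorphism of the required limit form and, by the first paragraph, is homotopic to $\beta$. The step I expect to be the real obstacle is the approximation step itself — converting the ``meets every $S^{1}\!\times\!\langle\beta\rangle$‑orbit'' conclusion of Lemma~\ref{FathiHermanLemma2} into a covering property of a \emph{single} finite $\langle R_{p_1/q_1}\circ\beta\rangle$‑orbit while honouring the competing constraints on $p_1/q_1$ (closeness to $p_0/q_0$ for $C^{\infty}$‑proximity, large denominator for covering), together with the analogous but heavier bookkeeping required for unique ergodicity.
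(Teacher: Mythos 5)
Your proposal is correct and follows the same Anosov--Katok/Fathi--Herman scheme as the paper: Baire category in the closure of the conjugates of $R_t\circ\beta$, open sets $E_U$ indexed by a countable basis, and density obtained from Lemma~\ref{FathiHermanLemma2} applied to rationals $p/q$ with ${\rm ord}(\beta)\mid q$ and $\gcd(p,q)=1$; your algebraic identity $g_1\circ g_0^{-1}=H_0\circ(h\circ R_{p_1/q_1-p_0/q_0}\circ h^{-1})\circ H_0^{-1}$ is exactly the paper's use of $H\circ(R_{p/q}\circ\beta)\circ H^{-1}=R_{p/q}\circ\beta$. The two genuine deviations are local. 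First, for the covering step the paper perturbs to \emph{irrational} $\theta$ and invokes minimality of $R_\theta\circ\beta$ restricted to the union of circles $V=\{R_\theta(\beta^l(m))\}$, which makes the argument soft (no quantitative bookkeeping); you instead stay with \emph{rational} $p_1/q_1$ of large denominator and convert Lemma~\ref{FathiHermanLemma2}(2) into a uniform $\delta$-ball statement by compactness, then use $({\rm ord}(\beta)/q_1)$-density of the angle grid. Your quantitative route works (the compactness upgrade and the grid-density computation are both valid, and the competing constraints on $p_1/q_1$ are compatible as you note), at the cost of tracking moduli of continuity that the paper's orbit-circle minimality argument avoids. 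Second, for the homotopy claim you build it into the space by restricting conjugators to diffeomorphisms homotopic to $\id_M$, so that \emph{every} element of your $\mathcal{C}$ is homotopic to $\beta$ (using that $C^0$-close maps into a closed manifold are homotopic); the paper instead keeps arbitrary conjugators and deduces the homotopy statement only for minimal elements near $\beta=R_0\circ\beta$, via the local-contractibility argument of Theorem~\ref{homotopyLemmaNonMfldCase}. Your version is a slight strengthening of the stated conclusion and is legitimate, since Lemma~\ref{FathiHermanLemma2} supplies conjugators homotopic to the identity, so the restricted space supports the whole density argument. For unique ergodicity you sketch the standard $G_\delta$ description (openness via upper semicontinuity of the set of invariant measures) and defer the density of those sets to a reinforced approximation in the style of Fathi--Herman; this is no less complete than the paper, which simply cites Glasner--Weiss, Section 3, but be aware that the extra equidistribution property of the conjugacy needed there is a genuine additional construction, not a formal consequence of Lemma~\ref{FathiHermanLemma2}.
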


\begin{proof}
The proof follows the structure of the proof of Theorem 1 from \cite{FatHer:Diffeo}, which begins on page 15 of that paper. The general idea is due to to Anosov and Katok \cite{MR0370662}.

Consider 
\[
\mathcal{O}^{\infty}(S^1, \beta):=\{ g \circ (R_t \circ \beta) \circ g^{-1} \mid t\in S^1, g \in {\rm Diff}^{\infty}(M) \}
\]
as a subset of Diff$^{\infty}(M)$. Since Diff$^{\infty}(M)$ is a complete metric space, the Baire category theorem holds for $\overline{\mathcal{O}^{\infty}(S^1, \beta)}$.

Given a non-empty open set $U$, define
\[
E_U:= \{ f \in \overline{\mathcal{O}^{\infty}(S^1, \beta)} \mid U \cup f(U) \cup \ldots \cup f^L(U)=M \text{ for some } L \in \mathbb{N} \}.
\]
For each non-empty open set $U$, the set $E_U$ is open in $\overline{\mathcal{O}^{\infty}(S^1, \beta)}$. 

We will show that for each non-empty open set $U$, the set $E_U$  is dense. By construction $\mathcal{O}^{\infty}(S^1, \beta)$ is dense in $\overline{\mathcal{O}^{\infty}(S^1, \beta)}$ so we need only show that, for each non-empty open set $U$, $g \circ (R_t \circ \beta) \circ g^{-1} \in \overline{E_U}$ for each $t\in S^1$ and $g \in {\rm Diff}^{\infty}(M)$. However,
\[
g \circ (R_t \circ \beta) \circ g^{-1} \in \overline{E_U} \hbox{ if and only if }R_t\circ \beta \in \overline{E_{g^{-1}(U)}}.
\]
Since $g^{-1}(U)$ is a non-empty open set, this reduces the proof to showing that for each non-empty open set $U$ and $t\in S^1$, $R_t \circ \beta \in \overline{E_U}$. Letting $D$ denote a dense subset of $S^1$, we can reduce further to proving that for each non-empty open set $U$ and $t\in D$, $R_t \circ \beta \in \overline{E_U}$.

We now specify the dense subset of $S^1$ that we will consider. Let
\[
D:= \left\{ \frac{p}{q} \in S^1 \mid \hbox{ the order of }\beta \hbox{ divides }q \hbox{ and }{\rm gcd}(p,q)=1 \right\}.
\]
The fact that $D$ is dense follows from a standard argument using the fact that $D$ contains elements that are arbitrarily small.
 
We now fix a non-empty open set $U$, $\frac{p}{q} \in D$ and a sequence of irrational numbers $\theta_n$ that converge to $\frac{p}{q}$. Applying Lemma \ref{FathiHermanLemma2} we obtain a $C^{\infty}$-diffeomorphism $H$ such that 
\begin{enumerate}[resume]
\item $H \circ R_{\frac{p}{q}} \circ \beta \circ H^{-1}=R_{\frac{p}{q}} \circ \beta$;
\item if $m \in M$, then there exists $\theta\in S^1$ and $l=0, \ldots, {\rm ord}(\beta)-1$ such that 
\[
R_{\theta}(\beta^l(m)) \in H^{-1}(U);
\]
\item $H$ is homotopic to the identity.
\end{enumerate}
The fact that $\theta_n \rightarrow \frac{p}{q}$ as $n \rightarrow \infty$ implies that
\[
H \circ R_{\theta_n} \circ \beta \circ H^{-1} \rightarrow H \circ R_{\frac{p}{q}} \circ \beta \circ H^{-1}=R_{\frac{p}{q}} \circ \beta.
\]
Therefore we need only show that $H \circ (R_{\theta} \circ \beta) \circ H^{-1} \in E_U$ for each irrational $\theta\in S^1$. The definition of $E_U$ and the compactness of $M$ reduces the proof to showing that 
\[
\cup_{i=0}^{\infty} (H \circ R_{\theta} \circ \beta \circ H^{-1})^i(U) = M,
\]
which is equivalent to showing
\[
\cup_{i=0}^{\infty} (R_{\theta} \circ \beta)^i (H^{-1}(U)) = M.
\]
Let $m \in M$ and set
\[
V:=\{ R_\theta (\beta^l(m)) \mid \theta \in S^1, l\in \N \}.
\]
Notice that $V$ is $(R_\theta \circ \beta)$-invariant and is diffeomorphic to the disjoint union of $k$-circles where $k \in \N$ satisfies
\begin{enumerate}[resume]
\item $\beta^{k+1}(m)=m$ and
\item $\beta^l(m)\neq m$ for $1\le l \le k$.
\end{enumerate}
We identify $V \cong S^1\times \{ 0, \ldots, k \}$. Then, under this identification, we have that for any $\theta$, $R_{\theta}\circ \beta$ acts via 
\[
(z, i) \in S^1 \times \{ 0, \ldots, k\} \mapsto ({\rm Rot}_{\theta}(z), i+1 \hbox{ mod }k+1)
\]
where ${\rm Rot}_{\theta}$ denotes rotation by $\theta$. In particular, for $\theta$ irrational, $(R_{\theta}\circ \beta)|_V$ is minimal.

By (5), there exists $\theta\in S^1$ and $l=0, \ldots, {\rm ord}(\beta)-1$ such that 
\[
R_{\theta}(\beta^l(m)) \in H^{-1}(U).
\]
Thus $G=H^{-1}(U) \cap V$ is a non-empty set and moreover is open in the subspace topology of $V$ because $H^{-1}(U)$ is open in $M$. Since $(R_{\theta}\circ \beta)|_V$ is minimal there exists $L\in \N$ such that
\[
\cup_{i=0}^L (R_{\theta}\circ \beta)^i(G) = V.
\]
Since $m\in V$, it follows that
\[
m \in \cup_{i=0}^L (R_{\theta}\circ \beta)^i(G) \subseteq \cup_{i=0}^L (R_{\theta}\circ \beta)^i(H^{-1}(U)) \subseteq \cup_{i=0}^{\infty} (R_{\theta} \circ \beta)^i(H^{-1}(U)).
\]
The choice of $m$ was arbitrary, so for each $\theta$ irrational, $H\circ (R_{\theta}\circ \beta)\circ H^{-1} \in E_U$ and hence, as observed above, we have that $E_U$ is dense in $ \overline{\mathcal{O}^{\infty}(S^1, \beta)}$.

To complete the proof, let $\{ U_i \}_{i\in \N}$ be a countable basis for the topology on $M$. Then by the Baire category theorem
\[
\cap_{i=0}^{\infty} E_{U_i}
\]
is dense in $\overline{\mathcal{O}^{\infty}(S^1, \beta)}$ and it follows from Proposition \ref{basicMinimalEquivalent} that each element in $\cap_{i=0}^{\infty} E_{U_i}$ is minimal.

The proof that we can take $\alpha$ to be homotopic to the $\beta$ is similar to the proof of Theorem \ref{homotopyLemmaNonMfldCase} so we omit the details.

Finally, the argument that we can take $\alpha$ to be uniquely ergodic is the same as in the proof given in \cite[Section 3]{GlaWei:MinSkePro}.
\end{proof}

\section{Generalization to homogeneous metric spaces}

To obtain progress on Question B from the introduction, a variant of the main result in the previous section is required. This variant will also be a key result for the $C^*$-algebraic applications in \cite{DPS:minDSKth}. In particular, we must move outside of the smooth category. Our result is related to a theorem of Glasner--Weiss \cite[Theorem 1]{GlaWei:MinSkePro}. More precisely, we extend the special case of their result (Theorem \ref{GlasnerWeissToZ} above) so as to allow for more general induced maps on $K$-theory. It is worth noting that the proof techniques we use are still closely related to work in \cite{FatHer:Diffeo} by Fathi--Herman and therefore build on the work of Anosov and Katok \cite{MR0370662}.

\subsection{The relevant complete metric space}

Let $M$ be a smooth closed manifold and let $Y$ be a compact metric space. We fix a Riemannian metric on $M$ and take the $l_1$-product metric on $M\times Y$. Define $\mathcal{A}$ to be the collection of homeomorphisms $\psi \in {\rm Homeo}(M\times Y)$ of the form
\[
 \psi(m,y)= (\alpha(m), g_m(y)),
 \]
 where 
 \begin{enumerate}
 \item $\alpha \in {\rm Diff}^{\infty}(M)$, and 
 \item $m \mapsto g_m$ is a continuous map from $M$ to ${\rm Homeo}(Y)$.
 \end{enumerate}
Define a metric on $\mathcal{A}$ via
\[
\d_{\mathcal{A}}(\psi, \tilde{\psi})= \d_{{\rm Diff}^{\infty}(M)}(\alpha, \tilde{\alpha})+ \d_{{\rm Homeo}(M\times Y)}(\psi, \tilde{\psi}).
\]
The proof of the next theorem uses the following straightforward observations:
\begin{enumerate}
\item The metric $\d(g, \tilde{g}) :=\sup_{m\in M} \d_{{\rm Homeo}(Y)}(g_m, \tilde{g}_m)$ is complete, and
\item $\d(g, \tilde{g}) \le \d_{\mathcal{A}}(\psi, \tilde{\psi})$.
\end{enumerate}
\begin{theorem}
$(\mathcal{A}, \d_{\mathcal{A}})$ is a complete metric space.
\end{theorem}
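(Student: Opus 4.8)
The plan is to verify completeness directly by taking a Cauchy sequence $(\psi_n)$ in $(\A, \d_\A)$, writing $\psi_n(m,y) = (\alpha_n(m), (g_n)_m(y))$, and showing it converges to an element of $\A$. First I would use the defining inequality $\d_{{\rm Diff}^\infty(M)}(\alpha_n, \alpha_k) \le \d_\A(\psi_n, \psi_k)$ to see that $(\alpha_n)$ is Cauchy in ${\rm Diff}^\infty(M)$, which is a complete metric space; hence $\alpha_n \to \alpha$ for some $\alpha \in {\rm Diff}^\infty(M)$. Next I would invoke observation (1) listed before the theorem: since $\d(g_n, g_k) \le \d_\A(\psi_n, \psi_k)$ by observation (2), the sequence $(g_n)$ is Cauchy with respect to the complete metric $\d(g,\tilde g) = \sup_{m\in M} \d_{{\rm Homeo}(Y)}((g_n)_m, (\tilde g)_m)$, so there is a limit $g$, i.e.\ a function $m \mapsto g_m$ with $(g_n)_m \to g_m$ uniformly in $m$.

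The remaining steps are to check that the candidate limit $\psi(m,y) := (\alpha(m), g_m(y))$ actually lies in $\A$ and that $\psi_n \to \psi$ in $\d_\A$. For membership in $\A$ I need: (i) $\alpha \in {\rm Diff}^\infty(M)$, which is already established; (ii) $m \mapsto g_m$ is continuous from $M$ to ${\rm Homeo}(Y)$ — this follows because a uniform limit of continuous maps is continuous, using that ${\rm Homeo}(Y)$ with its metric is a (complete) metric space; and (iii) $\psi$ is genuinely a homeomorphism of $M\times Y$ — continuity of $\psi$ is clear from (i) and (ii), and one obtains the inverse as $\psi^{-1}(m,y) = (\alpha^{-1}(m), (g_{\alpha^{-1}(m)})^{-1}(y))$, whose continuity uses continuity of $\alpha^{-1}$, continuity of $m \mapsto g_m$, and continuity of inversion in ${\rm Homeo}(Y)$ (which holds for the two-sided metric on ${\rm Homeo}(Y)$, i.e.\ the one controlling both $g$ and $g^{-1}$).

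Finally, to get $\d_\A(\psi_n, \psi) \to 0$ I would split the metric into its two summands. The first summand $\d_{{\rm Diff}^\infty(M)}(\alpha_n, \alpha) \to 0$ by construction. For the second summand $\d_{{\rm Homeo}(M\times Y)}(\psi_n, \psi)$, I would estimate $\d_{M\times Y}(\psi_n(m,y), \psi(m,y))$ using the $l_1$-product metric: it is bounded by $\d_M(\alpha_n(m), \alpha(m)) + \d_Y((g_n)_m(y), g_m(y))$, and the supremum over $(m,y)$ of the first term goes to $0$ since $\alpha_n \to \alpha$ in $C^0$ (a fortiori in $C^\infty$), while the supremum of the second term is dominated by $\sup_m \d_{{\rm Homeo}(Y)}((g_n)_m, g_m) = \d(g_n, g) \to 0$; the symmetric estimate for the inverses is entirely analogous using the formula for $\psi_n^{-1}$ above and uniform convergence of $\alpha_n^{-1} \to \alpha^{-1}$ and $(g_n)^{-1} \to g^{-1}$. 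The main point requiring a little care — and the step I expect to be the only mild obstacle — is ensuring that the convergence passes correctly to the inverses, i.e.\ that control of $\d_\A$ really does control $g_n^{-1}$ uniformly; this is exactly why the metric $\d$ on the $g$-component must be taken to be the two-sided one (or why one records that $\d_{{\rm Homeo}(Y)}$ already incorporates both $g$ and $g^{-1}$), so that observation (1)'s completeness statement delivers a limit whose inverse is also the uniform limit of the $g_n^{-1}$.
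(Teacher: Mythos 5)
Your proof is correct, but it takes a different route from the paper at the key middle step. The paper, after extracting $\alpha=\lim\alpha_k$ in ${\rm Diff}^\infty(M)$ and $g=\lim g^{(k)}$ in the sup metric exactly as you do, never constructs the limit by hand: it observes that $({\rm Homeo}(M\times Y),\d_{{\rm Homeo}(M\times Y)})$ is itself complete, so the Cauchy sequence $\psi_k$ already converges to some homeomorphism $\psi$ of $M\times Y$, and all that remains is to identify $\psi(m,y)=(\alpha(m),g_m(y))$ (which follows from the forward uniform convergence) to conclude $\psi\in\mathcal{A}$; convergence in $\d_{\mathcal{A}}$ is then immediate since both summands of the metric have already been shown to tend to $0$. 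You instead define $\psi(m,y):=(\alpha(m),g_m(y))$ from the componentwise limits and verify directly that it is a homeomorphism and that $\d_{{\rm Homeo}(M\times Y)}(\psi_n,\psi)\to 0$. This is more self-contained (it does not invoke completeness of the ambient homeomorphism group), but it is also what forces you into the inverse-convergence bookkeeping that the paper sidesteps entirely. On that point, one detail in your last step deserves to be made explicit: $\psi_n^{-1}(m,y)=\bigl(\alpha_n^{-1}(m),(g_n)^{-1}_{\alpha_n^{-1}(m)}(y)\bigr)$ while $\psi^{-1}(m,y)=\bigl(\alpha^{-1}(m),g^{-1}_{\alpha^{-1}(m)}(y)\bigr)$, so the two $Y$-components are evaluated at \emph{different} base points $\alpha_n^{-1}(m)$ and $\alpha^{-1}(m)$; besides uniform convergence $(g_n)^{-1}\to g^{-1}$ you also need uniform continuity of $m\mapsto g_m^{-1}$ (automatic since $M$ is compact and $m\mapsto g_m$ is continuous into the two-sided metric on ${\rm Homeo}(Y)$) together with $\alpha_n^{-1}\to\alpha^{-1}$ uniformly. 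This is a one-line fix, not a gap, but as written ``entirely analogous'' hides it; the paper's choice to take the limit inside the complete space ${\rm Homeo}(M\times Y)$ first is precisely what makes this issue disappear.
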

\begin{proof}
Checking that $\d_{\mathcal{A}}$ is a metric is standard and the details are therefore omitted.  

To show that it $\mathcal{A}$ is complete with respect to $\d_A$, let $\{ \psi_k \}_{k=1}^{\infty}$ be a Cauchy sequence in $\mathcal{A}$. By the definition of $\mathcal{A}$, $\psi_k(m,y)=(\alpha_k(m), g^{(k)}_m(y))$ for some $\alpha_k \in {\rm Diff}^{\infty}(M)$ and continuous maps $g^{(k)}_m : M \to {\rm Homeo}(Y)$. The definition of $\d_{\mathcal{A}}$ implies that $\{\alpha_k\}_{k=1}^{\infty}$ is a Cauchy sequence in ${\rm Diff}^{\infty}(M)$. Since ${\rm Diff}^{\infty}(M)$ is complete, there exists $\alpha \in {\rm Diff}^{\infty}(M)$ such that $\alpha = \lim_{k \to \infty} \alpha_k$. 

Similarly,  $({\rm Homeo}(M\times Y), \d_{\homeo(M \times Y)})$ is complete so the definition of $\d_{\mathcal{A}}$ implies there exists $\psi \in {\rm Homeo}(M\times Y)$ such that $\psi = \lim_{k \to \infty} \psi_k$ with respect to $\d_{\homeo(M \times Y)}$.

Now, the fact that $\{ \psi_k \}_{k=1}^{\infty}$ is Cauchy with respect to $\d_{{\rm Homeo}(M\times Y)}$ and, as observed above, $\d(g^{(k)}, {g}^{(l)}) :=\sup_{m\in M} \d_{{\rm Homeo}(Y)}(g^{(k)}_m, g^{(l)}_m) \leq \d_{\mathcal{A}}(\psi_k, \psi_l)$ implies that $\{g^{(k)}\}_{k=1}^{\infty}$ is Cauchy. By the completeness of $\d$, there exists $g: M \rightarrow {\rm Homeo}(Y)$ such that $g = \lim_{k \to \infty} g^{(k)}$.

Finally, we must show that $\psi \in \mathcal{A}$  and $\psi_k \rightarrow \psi$  with respect to $\d_{\mathcal{A}}$. It is straightforward to show that $\psi(m,y)=(\alpha(m), g_m(y))$, so $\psi \in \mathcal{A}$. That $\psi_k \rightarrow \psi$ now follows from the definition $\d_\mathcal{A}$.
\end{proof}

\subsection{Generalization to the (possibly) non-manifold case}

\begin{lemma} \label{FathiHermanLemmaToNopenPart1}
Suppose that $M$ is a smooth closed manifold admitting a locally smooth free action of $S^1$. Then, given a non-empty open set $U \subseteq M$ and $N\in \N$, there exist a collection of open sets $\{U_i\}_{i=0}^N$ and a $C^{\infty}$-diffeomorphism $H$ such that 
\begin{enumerate}
\item for each $i=0, \ldots, N$, $\overline{U_i} \subseteq U$;
\item for each $i\neq j$, $\overline{U_i} \cap \overline{U_j} = \emptyset$;
\item for each $i=0, \ldots, N$, $H^{-1}(U_i)$ meets each orbit of the $S^1$-action;
\item $H$ is homotopic to the identity on $M$.
\end{enumerate}
\end{lemma}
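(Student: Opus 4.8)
The plan is to obtain the statement from one application of Lemma~\ref{FathiHermanLemma1}, reducing it to a purely topological fact about the orbit structure, and then to settle that fact by constructing a single auxiliary function on $M$.

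First I would apply Lemma~\ref{FathiHermanLemma1} to $U$, getting a $C^{\infty}$-diffeomorphism $H$, homotopic to the identity, with $V:=H^{-1}(U)$ meeting every orbit of the $S^{1}$-action. It is then enough to find open sets $V_0,\dots,V_N\subseteq V$ with pairwise disjoint closures, each $\overline{V_i}\subseteq V$, each meeting every orbit: for then $U_i:=H(V_i)$ satisfies $\overline{U_i}=H(\overline{V_i})\subseteq H(V)=U$, the sets $\overline{U_i}=H(\overline{V_i})$ are pairwise disjoint since $H$ is a homeomorphism, and $H^{-1}(U_i)=V_i$ meets every orbit (and $H$ is homotopic to the identity). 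So everything comes down to: given an open $V\subseteq M$ meeting every orbit, find $N+1$ open subsets of $V$ with pairwise disjoint closures in $V$, each meeting every orbit.

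To do this I would fix an $S^{1}$-invariant Riemannian metric on $M$ and construct a smooth $h\colon M\to[0,\infty)$ with $\supp(h)\subseteq V$ such that, for some $c>0$ independent of the orbit, $\{h=0\}$ and $\{h\ge c\}$ both meet every orbit. Granting this: for each orbit $O$, connectedness forces $h(O)$ to be the interval $[0,\max_O h]$ with $\max_O h\ge c$, so choosing pairwise disjoint closed intervals $[a_i,b_i]\subseteq(0,c)$ and putting $V_i:=h^{-1}\big((a_i,b_i)\big)$ gives open sets meeting every orbit, with $\overline{V_i}\subseteq h^{-1}([a_i,b_i])\subseteq\{h\ge a_i\}\subseteq\supp(h)\subseteq V$ and with pairwise disjoint closures. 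The function $h$ I would take of the form $h=g\cdot w$. Here $g\ge 0$ is a standard partition-of-unity function — cover $M$ by precompact opens $W_k$ with short arcs $J_k\subseteq S^{1}$ such that $R_t(\overline{W_k})\subseteq V$ for $t\in J_k$, and smear bump functions along the $J_k$ — arranged so that $\supp(g)\subseteq V$ and $g\ge c_0>0$ on a sub-arc of uniformly positive length in every orbit; and $w:=1-e^{-(f-\bar f)^2/\varepsilon}$, where $f\colon M\to\R$ is a generic smooth function whose restriction to every orbit is non-constant (such $f$ exist by a transversality argument) and $\bar f(x):=\tfrac1{2\pi}\int_{S^{1}}f(R_tx)\,dt$ is the smooth orbit-average. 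Since $(f-\bar f)|_O$ has mean zero and is non-constant, it vanishes on a nonempty measure-zero subset of every orbit; thus $h$ vanishes somewhere on every orbit, while for $\varepsilon$ small enough — small enough that $\{|f-\bar f|<\delta\}$ meets each orbit in total length less than that of the good arc of $g$ — the factor $w$ is close to $1$ at a point of that good arc, so $h\ge c_0/2$ there.

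The step I expect to be the real obstacle is this last uniformity estimate: one must guarantee, simultaneously for \emph{every} orbit, that the arc where $g$ is large is not entirely absorbed into the small-value neighbourhood of the transversal $\{f=\bar f\}$. This needs control — uniform over the compact orbit space $M/S^{1}$ — both of the length of the good arc of $g$ and of the $\delta$-neighbourhoods of the (boundedly many) zeros of $f-\bar f$ on each orbit, which is where compactness of $M$ and parametrized transversality are used. The reduction via Lemma~\ref{FathiHermanLemma1} and the cutting of $h$ into $N+1$ levels are routine.
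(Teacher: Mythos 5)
Your reduction is sound: applying Lemma~\ref{FathiHermanLemma1} to $U$ and pulling back through $H$ does reduce the lemma to the claim that an open $V\subseteq M$ meeting every orbit contains $N+1$ open subsets with pairwise disjoint closures in $V$, each meeting every orbit; and, granted your auxiliary function $h$, the level-set slicing $V_i=h^{-1}\bigl((a_i,b_i)\bigr)$ does produce such subsets. The problem is that the crux of the construction is exactly the part you leave as an expectation, and it is a genuine gap rather than a routine compactness check. Two unproved claims carry all the weight: (i) the existence of a smooth $f$ that is non-constant on \emph{every} orbit, with (ii) uniform control of the sets $\{\,t : |f(R_tx)-\bar f(x)|<\delta\,\}$ over all orbits. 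Point (i) is not a standard jet-transversality statement --- constancy on a single orbit is a nonlocal condition, not a pointwise condition on jets, so ``such $f$ exist by a transversality argument'' is itself a lemma requiring proof. For (ii), non-constancy is not enough: a smooth $f$ can be non-constant on an orbit while $f-\bar f$ vanishes on a sub-arc of positive length, in which case $\{|f-\bar f|<\delta\}$ does not shrink as $\delta\to0$ on that orbit and the good arc of $g$ can be swallowed no matter how small $\varepsilon$ is. What you would actually need is, say, that $f-\bar f$ has zero set of measure zero (or finitely many zeros) on every orbit, plus a semicontinuity-and-compactness argument over the orbit space to extract one $\delta$ valid for all orbits; the parenthetical ``boundedly many zeros of $f-\bar f$ on each orbit'' is an unproved genericity claim, and none of this is supplied. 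So the architecture is plausible, but the analytic heart of the proof is missing.

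For comparison, the paper avoids this issue entirely by not treating Lemma~\ref{FathiHermanLemma1} as a black box: it reuses the flow-box data from the proof of Corollaire 4.11 of Fathi--Herman, namely pairwise disjoint closed $(d-1)$-disks $D_j$ transverse to the orbit foliation whose interiors meet every orbit, thickened along the flow to $D_j\times[0,\epsilon]$ with $H\bigl(\cup_j D_j\times[0,\epsilon]\bigr)\subseteq U$ and $H$ homotopic to the identity. Setting $U_i=H\bigl(\cup_j \mathrm{int}(D_j)\times\bigl(\tfrac{2i\epsilon}{4N},\tfrac{(2i+1)\epsilon}{4N}\bigr)\bigr)$ gives all $N+1$ sets at once: disjointness of closures comes from the disjoint parameter intervals, and ``$H^{-1}(U_i)$ meets every orbit'' comes from the transversality of the disks, with no auxiliary function or genericity statement needed. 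If you want to salvage your scheme, the natural repair is to replace the pair $(f,\bar f)$ by this transverse-disk flow parameter, which supplies exactly the uniform ``coordinate along the orbits'' that your $h$ was trying to manufacture.
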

\begin{proof}
Before beginning the proof, notice that the $N=0$ case is exactly \cite[Corollaire 4.11]{FatHer:Diffeo}, which was stated above as Lemma~\ref{FathiHermanLemma1}. As such, we follow the proof of \cite[Corollaire 4.11]{FatHer:Diffeo} closely (see also \cite[Proposition 4.8]{FatHer:Diffeo}). 

Let $d$ denote the dimension of $M$. The locally free $S^1$-action gives $M$ the structure of a foliation with codimension $d-1$. As in the proof of \cite[Corollaire 4.11]{FatHer:Diffeo}, there exist $k\ge 1$, closed $d-1$ dimensional disks $\{ D_j \}_{j=1}^k$, $\epsilon>0$, and $C^{\infty}$-diffeomorphism $H:M \rightarrow M$ such that
\begin{enumerate}
\item[(I)] the disks are pairwise disjoint and transverse to the the foliation;
\item[(II)] the union of the interiors of the disks meet each $S^1$-orbit;
\item[(III)] $H( \cup_{j=1}^k D_j\times [0, \epsilon]) \subseteq U$ where for each $j$, $D_j \times [0, \epsilon]$ is identified with a closed disk of dimension $d$ inside $M$;
\item[(IV)] $H$ is homotopic to the identity.
\end{enumerate} 
Next for each $i=0, 1, \ldots N$, let
\[
U_i :=H \left(\bigcup_{j=1}^k {\rm int}(D_j) \times \left( \frac{2i \epsilon}{4N}, \frac{(2i+1) \epsilon}{4N} \right) \right).
\]
Using (I)--(IV),  one sees that $\{ U_i \}_{i=0}^N$ and $H$ have the required properties, namely (1)--(4) in the statement of the lemma.
\end{proof}

Next, we generalize \cite[Corollaire 4.12]{FatHer:Diffeo}.
\begin{lemma} \label{FathiHermanLemmaToNopen}
Suppose that $M$ is a smooth closed manifold admitting a smooth locally free action of $S^1$. Then, given a rational number $\frac{p}{q}$, a non-empty open set $U \subseteq M$ and $N\in \N$, there exists a collection of open sets $\{U_i\}_{i=0}^N$ and a $C^{\infty}$-diffeomorphism $H$ such that 
\begin{enumerate}
\item for each $i=0, \ldots, N$, $\overline{U_i} \subseteq U$;
\item for each $i\neq j$, $\overline{U_i} \cap \overline{U_j} = \emptyset$;
\item for each $i=0, \ldots, N$, $H^{-1}(U_i)$ meets each of orbit of the $S^1$-action;
\item $H \circ R_{\frac{p}{q}} \circ H^{-1}=R_{\frac{p}{q}}$.
\end{enumerate}
\end{lemma}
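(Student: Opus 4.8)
The plan is to run the argument for Lemma~\ref{FathiHermanLemmaToNopenPart1} while carrying along the element $R_{p/q}$, and to force the resulting diffeomorphism to commute with $R_{p/q}$ by the device used for Lemma~\ref{FathiHermanLemma2}. First I would make two harmless reductions: only the class of $p/q$ in $\R/\Z$ matters, so I may assume $\gcd(p,q)=1$; and if then $q=1$ we have $R_{p/q}=\id$, so condition~(4) is vacuous and the statement is exactly Lemma~\ref{FathiHermanLemmaToNopenPart1} (its homotopy conclusion is simply not needed). So assume $q\ge 2$, in which case $R_{p/q}$ has order exactly $q$ and generates a finite cyclic subgroup $G\subseteq S^1$.

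The core is the construction of a single $C^\infty$-diffeomorphism $H$ together with flow boxes, as in the proof of Lemma~\ref{FathiHermanLemmaToNopenPart1}. Following that proof (which follows \cite[Proposition 4.8, Corollaire 4.11]{FatHer:Diffeo}) one produces pairwise disjoint closed $(d-1)$-disks $D_1,\dots,D_k$ transverse to the orbit foliation whose interiors meet every orbit, a number $\epsilon>0$ making $\bigcup_j D_j\times[0,\epsilon]$ a disjoint union of flow boxes in $M$, and a $C^\infty$-diffeomorphism $H$ with $H\bigl(\bigcup_j D_j\times[0,\epsilon]\bigr)\subseteq U$. The new requirement is $H\circ R_{p/q}\circ H^{-1}=R_{p/q}$; this is the $N=0$ case, a form of \cite[Corollaire 4.12]{FatHer:Diffeo}, and one arranges it by choosing the transversals $D_j$ and performing the isotopy that defines $H$ on a fundamental domain for the $G$-action on $M$, then extending $G$-equivariantly by the powers of $R_{p/q}$; the resulting $H$ commutes with $R_{p/q}$ by construction.

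Granting such an $H$ and flow boxes, the passage to general $N$ is verbatim the proof of Lemma~\ref{FathiHermanLemmaToNopenPart1}. Set
\[
U_i:=H\Bigl(\bigcup_{j=1}^{k}\mathrm{int}(D_j)\times\Bigl(\tfrac{2i\epsilon}{4N},\ \tfrac{(2i+1)\epsilon}{4N}\Bigr)\Bigr),\qquad i=0,\dots,N .
\]
Each $U_i$ is open; since $H$ is a homeomorphism of the compact space $M$ it carries closures to closures, so $\overline{U_i}\subseteq H\bigl(\bigcup_j D_j\times[0,\epsilon]\bigr)\subseteq U$, which is (1), and the closures of the defining sub-slabs are pairwise disjoint since consecutive index intervals are separated, which is (2). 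For (3), $H^{-1}(U_i)=\bigcup_j\mathrm{int}(D_j)\times(\tfrac{2i\epsilon}{4N},\tfrac{(2i+1)\epsilon}{4N})$ meets every orbit, because every orbit meets some $\mathrm{int}(D_j)\times\{0\}$ and then, the box being a genuine flow box, sweeps through the whole interval above it. Property~(4) is built into $H$.

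The step I expect to be the real obstacle is the equivariant construction of $H$ in the second paragraph. In Lemma~\ref{FathiHermanLemma2} the analogue of $H$ was produced cleanly by passing to the quotient $M/\langle R_{p/q}\circ\beta\rangle$, which is a closed manifold because, under the hypotheses there, that finite group acts \emph{freely} on $M$. Here the $S^1$-action is only assumed locally free, so $G=\langle R_{p/q}\rangle$ can fail to act freely on $M$ --- it fixes pointwise every exceptional orbit whose isotropy order is divisible by $q$ --- and $M/G$ is in general only an orbifold. One therefore cannot lift a diffeomorphism from a smooth quotient and must instead perform the isotopy by hand, box by box, on a fundamental domain and extend equivariantly, while simultaneously respecting the flow-box structure needed for the subdivision above; carrying this out, along the lines of \cite[Corollaire 4.12]{FatHer:Diffeo}, is the delicate point.
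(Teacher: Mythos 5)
Your proposal is essentially the paper's own proof: the paper disposes of this lemma in one line --- ``the same as the proof of \cite[Corollaire 4.12]{FatHer:Diffeo} upon replacing the use of Corollaire 4.11 with Lemma \ref{FathiHermanLemmaToNopenPart1}'' --- which is exactly the combination you describe, namely equivariantizing the flow-box construction in the style of Corollaire 4.12 and then subdividing the flow parameter into the $N+1$ slabs, and your verification of (1)--(3) from the slabs matches the proof of Lemma \ref{FathiHermanLemmaToNopenPart1} verbatim. The delicate equivariant step you flag (the finite cyclic group $\langle R_{p/q}\rangle$ need not act freely when the $S^1$-action is only locally free, so the smooth-quotient-and-lift device of Lemma \ref{FathiHermanLemma2} is unavailable) is precisely the content that the paper's citation of Corollaire 4.12 is carrying, in the paper just as in your sketch.
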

\begin{proof}
The proof is the same as the proof of \cite[Corollaire 4.12]{FatHer:Diffeo} upon replacing the use of  \cite[Corollaire 4.11]{FatHer:Diffeo} with Lemma \ref{FathiHermanLemmaToNopenPart1}. We omit the details.
\end{proof}

Suppose in addition to the above notation, $\beta$ is a finite order homeomorphism of $Y$. Let $\mathcal{S}(S^1, \beta)$ be the collection of homeomorphisms $M\times Y \to M \times Y$ of the form
\[ 
\psi=h \circ (R_t \times \beta) \circ h^{-1},
\]
for some $h\in \mathcal{A}$ and $t\in S^1$.

One can check that $\mathcal{S}(S^1, \beta) \subseteq \mathcal{A}$.  Since $\mathcal{A}$ is a complete metric space, the Baire category theorem holds in the closure $\overline{\mathcal{S}(S^1, \beta)} \subset \mathcal{A}$.

\begin{theorem} \label{MainTheoremNonManifoldCase}
Suppose that $M$ is a smooth closed manifold, $R_{\theta}$ is a smooth free action of $S^1$ on $M$, $Y$ is a compact metric space, $\Gamma$ is a path connected subgroup of $\homeo(Y)$ such that $(Y, \Gamma)$ is minimal, and $\beta: Y \rightarrow Y$ is a finite order homeomorphism.
Then there is a dense $G_{\delta}$-set of uniquely ergodic minimal homeomorpisms in $\overline{\mathcal{S}(S^1, \beta)}$. 
\end{theorem}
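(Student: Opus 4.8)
The plan is to run the Anosov--Katok / Fathi--Herman Baire category scheme in the complete metric space $\overline{\mathcal{S}(S^1,\beta)} \subseteq \mathcal{A}$, exactly as in the proof of Theorem~\ref{mainTheoremManifoldCase}, but now carrying the $Y$-coordinate along via the subgroup $\Gamma$. For a non-empty open set $U \subseteq M\times Y$ let
\[
E_U := \{\, f \in \overline{\mathcal{S}(S^1,\beta)} \mid U \cup f(U) \cup \cdots \cup f^L(U) = M\times Y \text{ for some } L\in\N \,\}.
\]
Each $E_U$ is open in $\overline{\mathcal{S}(S^1,\beta)}$, and by Proposition~\ref{basicMinimalEquivalent} any element of $\bigcap_i E_{U_i}$, for $\{U_i\}$ a countable basis of $M\times Y$, is minimal. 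So the whole game is to show each $E_U$ is dense; since $\mathcal{S}(S^1,\beta)$ is dense in its closure and conjugation by a fixed $h\in\mathcal{A}$ is an isometry-like bijection carrying $E_U$ to $E_{h^{-1}(U)}$, this reduces (as in Theorem~\ref{mainTheoremManifoldCase}) to showing that for every non-empty open $U$ and every $t$ in a dense subset $D\subseteq S^1$ — namely $D = \{ p/q : \operatorname{ord}(\beta) \mid q,\ \gcd(p,q)=1\}$ — we have $R_t\times\beta \in \overline{E_U}$.

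To get at this, I would first shrink $U$ to a product: pick non-empty open $U_M\subseteq M$ and $U_Y\subseteq Y$ with $U_M\times U_Y\subseteq U$, so it suffices to show $R_t\times\beta\in\overline{E_{U_M\times U_Y}}$. Fix $p/q\in D$ and a sequence of irrationals $\theta_n\to p/q$. Here is where the two factors are handled separately. On the $M$-side, apply Lemma~\ref{FathiHermanLemmaToNopen} (the $N$-parameter refinement): given $U_M$ and $N\in\N$ it produces disjoint open sets $\{U_i\}_{i=0}^N$ with $\overline{U_i}\subseteq U_M$, each $H^{-1}(U_i)$ meeting every $S^1$-orbit, and $H\circ R_{p/q}\circ H^{-1}=R_{p/q}$. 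The point of having $N+1$ disjoint "copies" of targets, rather than one, is the $Y$-coordinate: since $\Gamma$ is path connected and $(Y,\Gamma)$ is minimal, for suitably large $N$ one can choose, for each orbit, a $\Gamma$-path steering $U_Y$ around so that finitely many iterates of an appropriately built skew homeomorphism sweep all of $Y$ in the $Y$-direction while the $M$-direction is covered by the rotation on the (circle-or-disjoint-circles) orbit structure. Concretely I would build $h\in\mathcal{A}$ of the form $h(m,y) = (H(m), \gamma_m(y))$ with $m\mapsto\gamma_m$ a continuous $\Gamma$-valued map chosen using the path-connectedness of $\Gamma$, arrange that $h\circ(R_{p/q}\times\beta)\circ h^{-1}$ still lies in $\mathcal{S}(S^1,\beta)$ (this uses $H\circ R_{p/q}\circ H^{-1}=R_{p/q}$ and a compatibility of $\gamma$ with $\beta$ analogous to item (1) of Lemma~\ref{FathiHermanLemma2}), and then show that for $\theta$ irrational $h\circ(R_\theta\times\beta)\circ h^{-1}\in E_{U_M\times U_Y}$ by the same orbit-by-orbit minimality argument as in Theorem~\ref{mainTheoremManifoldCase}: on each orbit $V$ the $M$-map is a minimal rotation (or minimal rotation through a cycle of circles), and the $\Gamma$-twisting makes the combined system's orbit closure all of $V\times Y$, so compactness gives a uniform $L$. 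Letting $\theta_n\to p/q$ then puts $R_{p/q}\times\beta$ in $\overline{E_{U_M\times U_Y}}$.

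Once density of every $E_U$ is established, the Baire category theorem in $\overline{\mathcal{S}(S^1,\beta)}$ gives a dense $G_\delta$ of minimal homeomorphisms, and restricting further to the unique-ergodicity condition is handled exactly as in \cite[Section~3]{GlaWei:MinSkePro} (and as invoked at the end of the proof of Theorem~\ref{mainTheoremManifoldCase}): the set of uniquely ergodic elements is itself a dense $G_\delta$, so the intersection is again a dense $G_\delta$. The main obstacle I expect is the construction in the previous paragraph of the continuous $\Gamma$-valued twist $m\mapsto\gamma_m$ that simultaneously (i) keeps the conjugated map inside the structured class $\mathcal{S}(S^1,\beta)$ — in particular respects the finite-order $\beta$ in the $Y$-direction — and (ii) actually forces $Y$-minimality along each orbit; getting both at once is precisely the subtlety that corresponds to the $N$-fold disjointness in Lemma~\ref{FathiHermanLemmaToNopen} and to the interplay between the path-connectedness of $\Gamma$ and the cyclic orbit structure coming from $\beta$ having finite order, and it is also (per the acknowledgements) the place where the original formulation needed correcting, so I would be especially careful to state the $\beta$-compatibility of $\gamma$ correctly.
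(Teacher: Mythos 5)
Your proposal follows essentially the same route as the paper: the Anosov--Katok/Glasner--Weiss Baire category scheme with the open sets $E_W$, reduction to product open sets $U\times V$ and to the dense set $D=\{p/q : \mathrm{ord}(\beta)\mid q,\ \gcd(p,q)=1\}$, Lemma~\ref{FathiHermanLemmaToNopen} supplying $N+1$ disjoint sets $U_i$ whose preimages meet every orbit, a continuous $\Gamma$-valued, $\beta$-compatible twist producing an element of $\mathcal{S}(S^1,\beta)$ that fixes $R_{p/q}\times\beta$ exactly, irrational approximation $\theta_n\to p/q$, and unique ergodicity as in \cite[Section 3]{GlaWei:MinSkePro}. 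The one step you flag as the main obstacle is exactly what the paper carries out explicitly: after shrinking $U$ so that $\overline{U},\overline{R_{p/q}(U)},\ldots,\overline{R_{p/q}^{q-1}(U)}$ are pairwise disjoint (freeness of the action), it sets $g_m=h_i$ on $U_i$ where $h_1,\ldots,h_N\in\Gamma$ satisfy $V\cup h_1(V)\cup\cdots\cup h_N(V)=Y$, extends to the identity on $\partial U$ by path-connectedness of $\Gamma$, puts $g_m=\beta^k\circ g_{\tilde m}\circ\beta^{-k}$ on $R_{p/q}^k(U)$ and the identity elsewhere, and then covers a given $(m,y)$ not by minimality of the skew product on an orbit times $Y$ but by noting that $((H^{-1}\times\id_Y)\circ G)(U\times V)$ already meets the circle $\{(R_t(m),y):t\in S^1\}$ and that $(R_\theta\times\beta)^{\mathrm{ord}(\beta)}=R_{\mathrm{ord}(\beta)\theta}\times\id_Y$ acts there as an irrational rotation, which is the precise form of the orbit-by-orbit argument you sketch.
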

\begin{remark} \label{RemMainTheoremNonManifoldCase}
It is worth noting that $Y$ in the previous theorem can be taken to be any closed, connected manifold or any compact, connected Hilbert manifold, see page 323 of \cite{GlaWei:MinSkePro}. The relevant $\Gamma$ is the path component of the identity in $\homeo(Y)$.
\end{remark}
\begin{proof}
Given $W$ a non-empty open set in $M\times Y$, we define $E_{W}$ to be the collection 
\[ \{f\in \overline{\mathcal{S}(S^1, \beta)} \mid W \cup f(W) \cup \ldots \cup f^L(W)=M \times Y, \hbox{for some } L \in \mathbb{N}\}.  \]
Note that for each $W$, $E_{W}$ is open. We will show it is dense in $\overline{\mathcal{S}(S^1, \beta)}$. Define
\[
D= \left\{ \frac{p}{q} \in S^1 \mid \hbox{the order of }\beta \hbox{ divides }q \hbox{ and }{\rm gcd}(p,q)=1 \right\}.
\]

As in the proof of Theorem \ref{mainTheoremManifoldCase}, we need only prove that for each non-empty open set $W$ and $\frac{p}{q}\in D$, we have that $R_{\frac{p}{q}} \times \beta \in \overline{E_W}$. Furthermore, since we are working with the product topology, we need only prove that $R_{\frac{p}{q}} \times \beta \in \overline{E_{U\times V}}$ for each non-empty open set $U$ in $M$ and each non-empty open set $V$ in $Y$.

As such, we fix $W=U\times V$ where $U$ is a non-empty open set in $M$ and $V$ is a non-empty open set in $Y$. By assumption $(Y, \Gamma)$ is minimal and hence there exists $h_1, \ldots, h_N$ in $\Gamma$ such that 
\[
V \cup h_1(V) \cup \ldots \cup h_N(V) = Y.
\]

Using the fact that the action of $S^1$ is free, and by possibly replacing $U$ with a smaller non-empty open subset, we can assume that the sets
\[
\overline{U}, \overline{R_{\frac{p}{q}}(U)}, \ldots, \overline{R_{\frac{p}{q}}^{q-1}(U)}
\]
are pairwise disjoint.

Apply Lemma~\ref{FathiHermanLemmaToNopen} to $U$ to obtain open subsets $\{ U_i \}_{i=0}^N$ and a $C^{\infty}$-diffeomorphism  $H$ satisfying (1)--(4) of Lemma~\ref{FathiHermanLemmaToNopen}. Our goal is to define a continuous map $g: M \rightarrow \Gamma$. We start with elements in $M$ that are in $U_i$ for some $i = 0, \dots, N$. Let 
\[ g_{m} = h_i \hbox{ whenever }m\in U_i,
\]
where $h_0:=\id_Y$.
We then extend $g$ continuously to a map $U \rightarrow  \Gamma$ satisfying $g_m= \id_Y$ for any $m$ in the boundary of $U$. This can be done because each $h_i$ is homotopic to the identity.

Next, we extend $g$ to 
\[
m \in R_{\frac{p}{q}}(U) \dot{\cup} \ldots \dot{\cup}  R_{\frac{p}{q}}^{q-1}(U).
\]
Notice that if $m$ is in this set, then there exist  unique $\tilde{m} \in U$ and unique $k$,  $1\le k \le q-1$, satisfying
\[
R_{\frac{p}{q}}^k(\tilde{m})=m.
\]
Define
\[
g_{m} := \beta^k \circ g_{\tilde{m}} \circ \beta^{-k}.
\]
Finally, we extend $g$ to all of $M$ by defining it to be the identity for any point not in $U \dot{\cup} R_{\frac{p}{q}}(U) \dot{\cup} \ldots \dot{\cup}  R_{\frac{p}{q}}^{q-1}(U)$. One can check that $g: M \rightarrow \Gamma$ is well defined and continuous. 

Let $G \in \homeo(M \times Y)$ be defined by $G(m,y):=(m, g_m(y))$. For $t\in S^1$ consider $G^{-1} \circ (H\times \id_Y) \circ (R_t \times \beta) \circ (H^{-1}\times \id_Y) \circ G$. Observe that $G^{-1} \circ (H \times \id_Y) \circ (R_t \times \beta) \circ (H^{-1} \times \id_Y) \circ G \in \mathcal{S}(S^1, \beta)$. 

Our next goal is to show that 
\[
G^{-1} \circ (H\times \id_Y) \circ (R_{\frac{p}{q}} \times \beta) \circ (H^{-1}\times \id_Y) \circ G=R_{\frac{p}{q}} \times \beta.
\]
Lemma \ref{FathiHermanLemmaToNopen} implies that 
\[ G^{-1} \circ (H\times \id_Y) \circ (R_{\frac{p}{q}} \times \beta) \circ (H^{-1}\times \id_Y) \circ G=G^{-1} \circ (R_{\frac{p}{q}} \times \beta) \circ G. \]
Let $(m,y)\in M \times Y$. There are two cases:

{\bf Case 1: } Suppose that $m \in U \dot{\cup} R_{\frac{p}{q}}(U) \dot{\cup} \ldots \dot{\cup}  R_{\frac{p}{q}}^{q-1}(U)$. Then 
\begin{align*}
\lefteqn{G^{-1} \circ (R_{\frac{p}{q}} \times \beta) \circ G(m,y)}\\
 & = (R_{\frac{p}{q}}(m), g_{R_{\frac{p}{q}}(m)}^{-1}(\beta(g_m(y)))) \\
& =(R_{\frac{p}{q}}(m),((\beta^{k+1} \circ g_{\tilde{m}} \circ \beta^{-k-1})^{-1} \circ \beta \circ \beta^k \circ g_{\tilde{m}}\circ \beta^{-k})(y)) \\
& =(R_{\frac{p}{q}}(m),(\beta^{k+1} \circ g_{\tilde{m}}^{-1} \circ \beta^{-k-1} \circ \beta \circ \beta^k \circ g_{\tilde{m}}\circ \beta^{-k})(y)) \\
& = (R_{\frac{p}{q}}(m), \beta(y)).
\end{align*}

{\bf Case 2: } Suppose that $m \not\in U \dot{\cup} R_{\frac{p}{q}}(U) \dot{\cup} \ldots \dot{\cup}  R_{\frac{p}{q}}^{q-1}(U)$. Then we must also have that $R_{\frac{p}{q}}(m) \not\in U \dot{\cup} R_{\frac{p}{q}}(U) \dot{\cup} \ldots \dot{\cup}  R_{\frac{p}{q}}^{q-1}(U)$. Thus, by the definition of $g: M \rightarrow \Gamma$, we have that $g_{m}=\id_Y$ and $g_{R_{\frac{p}{q}}(m)}=\id_Y$, and we have shown that \[
G^{-1} \circ (H\times \id_Y) \circ (R_{\frac{p}{q}} \times \beta) \circ (H^{-1}\times \id_Y) \circ G=R_{\frac{p}{q}} \times \beta.\]

Take a sequence of irrational numbers $\{\theta_n\}_{n=0}^\infty$ that converge to $\frac{p}{q}$. Then 
\[
G^{-1} \circ (H\times \id_Y) \circ (R_{\theta_n} \times \beta) \circ (H^{-1}\times \id_Y) \circ G
\]
converges to 
\[
G^{-1} \circ (H\times \id_Y) \circ (R_{\frac{p}{q}} \times \beta) \circ (H^{-1}\times \id_Y) \circ G=R_{\frac{p}{q}} \times \beta.
\]
Hence to prove that $R_{\frac{p}{q}} \times \beta\in \overline{E_{U\times V}}$, we need only show that for each $\theta$ irrational, $G^{-1} \circ (H\times \id_Y) \circ (R_{\theta_n} \times \beta) \circ (H^{-1}\times \id_Y) \circ G \in E_{U\times V}$. By compactness of $M\times Y$, this further reduces to showing that
\[
\cup_{i=0}^{\infty}(R_{\theta} \times \beta)^i((H^{-1} \times \id_Y) \circ G)(U\times V))=M \times Y.
\]
Let $(m,y)\in M \times Y$. There exists $0\le j_0 \le N$ and $\tilde{y}\in V$ such that $h_{j_0}(\tilde{y})=y$. Since $H^{-1}(U_{j_0})$ meets each $S^1$-orbit, there exists $\tilde{m} \in U_{j_0}$ and $t_0\in S^1$ such that $R_{t_0}(m)=H^{-1}(\tilde{m})$. Then 
\[
((H^{-1}\times \id_Y)\circ G)(\tilde{m}, \tilde{y}) = (H^{-1}\times \id_Y)(\tilde{m}, g_{\tilde{m}}(\tilde{y}))=(H^{-1}(\tilde{m}), h_{j_0}(\tilde{y}))=(R_{t_0}(m), y)
\]
Hence, we have shown that
\[
((H^{-1}\times \id_Y) \circ G)(U \times V) \cap \{ (R_{t}(m), y) \mid t \in S^1 \}
\] 
is a non-empty open set in $\{ (R_{t}(m), y) \mid t \in S^1 \}$.

The set $\{ (R_{t}(m), y) \mid t \in S^1 \}$ is $(R_t\times \id_Y)$-invariant, and for fixed $t$, the restriction of $R_t\times \id_Y$ to this set is given by rotation of $t$. If $\theta$ is irrational, then ${\rm ord}(\beta)\cdot \theta$ is also irrational and hence there exists $L\in \N$ such that
\begin{align*}
&\cup_{i=0}^L (R_{{\rm ord}(\beta)\cdot \theta} \times \id_Y)^i(H^{-1}(U) \times G(V) \cap \{ (R_{t}(m), y) \mid t \in S^1 \}) \\
&\quad =\{ (R_{t}(m), y) \mid t \in S^1 \}.
\end{align*}
In particular, $(m, y) \in \cup_{i=0}^L (R_{{\rm ord}(\beta)\cdot \theta} \times \id_Y)^i(H^{-1}(U) \times G(V))$. So
\begin{align*}
(m, y) & \in \cup_{i=0}^L (R_{\theta} \times \id_Y)^{ {\rm ord}(\beta) \cdot i}(H^{-1}(U) \times G(V)) \\
& \subseteq \cup_{i=0}^{{\rm ord}(\beta)\cdot L} (R_{\theta} \times \beta)^i(H^{-1}(U) \times G(V)) \\
& \subseteq \cup_{i=0}^{\infty} (R_{\theta} \times \beta)^i(H^{-1}(U) \times G(V)) .
\end{align*}
Since $(m,y)$ was arbitrary, it follows that 
\[ 
\cup_{i=0}^{\infty}(R_{\theta} \times \beta)^i(H^{-1}(U) \times G(V))=M \times Y,
\]
whence $G^{-1} \circ (H\times id_Y) \circ (R_{\theta} \times \beta) \circ (H^{-1}\times id_Y) \circ G \in E_{U\times V}$ for each $\theta$ irrational. Hence we have that $R_{\frac{p}{q}}\times \beta \in \overline{E_{U\times V}}$ and $E_{U\times V}$ is dense in $\overline{\mathcal{S}(S^1, \beta)}$. The statement of the theorem now follows from the Baire category theorem and the fact that topology on $M\times Y$ has a countable basis. Also note that the uniquely ergodic part of the theorem follows in the same way as in the proof given in \cite[Section 3]{GlaWei:MinSkePro}.
\end{proof}
\begin{theorem} \label{SmoothMap} Let $\alpha \in \diff^\infty(M)$ and let $m \mapsto h_m$ be a continuous map from $M$ to ${\rm Homeo}(Y)$. Let $\psi\in \overline{\mathcal{S}(S^1, \beta)} \subseteq \mathcal{A}$ be defined by $\psi(m,y)=(\alpha(m), h_m(y))$, $(m,y) \in M \times Y$. Then if $\psi$ is minimal,  $\alpha$ is also minimal.
\end{theorem}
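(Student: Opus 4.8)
The plan is to observe that $\psi$ is, by its very form, a skew product over the base dynamical system $(M,\alpha)$, so that $(M,\alpha)$ is a \emph{factor} of $(M\times Y,\psi)$, and then to invoke the elementary fact that a factor of a minimal system is minimal. No properties of $\overline{\mathcal{S}(S^1,\beta)}$ beyond membership in $\mathcal{A}$ will actually be needed.

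First I would record the intertwining identity. Let $\pi_M\colon M\times Y\to M$, $\pi_M(m,y)=m$, be the first-coordinate projection; it is continuous and, since $Y\neq\emptyset$, surjective. From $\psi(m,y)=(\alpha(m),h_m(y))$ one reads off $\pi_M(\psi(m,y))=\alpha(m)=\alpha(\pi_M(m,y))$, i.e. $\pi_M\circ\psi=\alpha\circ\pi_M$. Thus $\pi_M$ is a factor map from $(M\times Y,\psi)$ onto $(M,\alpha)$ (both $\psi$ and $\alpha$ being homeomorphisms, of $M\times Y$ and of $M$ respectively).

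Second I would run the standard factor argument. Suppose $F\subseteq M$ is non-empty, closed, and $\alpha(F)=F$. Then $\pi_M^{-1}(F)=F\times Y$ is non-empty and closed, and $\psi(F\times Y)=F\times Y$: the inclusion $\subseteq$ holds because $\psi(m,y)\in\alpha(F)\times Y=F\times Y$ for $m\in F$, and $\supseteq$ holds because, given $(m',y')\in F\times Y$, one picks $m\in F$ with $\alpha(m)=m'$ (as $\alpha(F)=F$) and then $y\in Y$ with $h_m(y)=y'$ (as $h_m\in\homeo(Y)$), so $\psi(m,y)=(m',y')$. Since $\psi$ is minimal, $F\times Y=M\times Y$, and applying the surjection $\pi_M$ gives $F=M$. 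Hence $\alpha$ is minimal; being an element of $\diff^{\infty}(M)$, it is in fact a minimal diffeomorphism.

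I do not expect any genuine obstacle here: the entire content is the observation that every element of $\mathcal{A}$ — in particular every element of $\overline{\mathcal{S}(S^1,\beta)}$ — is a skew product with base map $\alpha$, after which minimality descends automatically to the base. In the write-up I would note that the hypothesis $\psi\in\overline{\mathcal{S}(S^1,\beta)}$ is used only to situate the statement alongside Theorem~\ref{MainTheoremNonManifoldCase}, and that the argument applies verbatim to any minimal $\psi\in\mathcal{A}$.
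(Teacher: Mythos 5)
Your proposal is correct and follows essentially the same route as the paper: both arguments amount to the observation that the projection $\pi_M$ intertwines $\psi$ with $\alpha$, so minimality passes to the factor $(M,\alpha)$. The only cosmetic difference is that the paper verifies this via the finite open cover criterion of Proposition \ref{basicMinimalEquivalent} (from $\bigcup_{i=0}^{L}\psi^{i}(U\times Y)=M\times Y$ it deduces $\bigcup_{i=0}^{L}\alpha^{i}(U)=M$), whereas you check the closed invariant set definition directly; both verifications are immediate.
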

\begin{proof}
Let $U$ be a non-empty open subset of $M$. Then since $\psi$ is minimal, there exists $L\in \N$ such that
\[
\cup_{i=0}^L \psi^i(U\times Y) = M \times Y.
\]
This implies that
\[
\cup_{i=0}^L \alpha^i(U) = M.
\]
Since $U$ was an arbitrary non-empty open set, it follows that $\alpha$ is minimal.
\end{proof}

Recall that a topological space is locally contractible if each point in the space has a local basis of contractible sets.
\begin{theorem} \label{homotopyLemmaNonMfldCase}
Suppose that $M$ is a smooth closed manifold, $R_{\theta}$ is a smooth free action of $S^1$ on $M$, $Y$ is a compact metric space, $\Gamma$ is a path connected subgroup of $\homeo(Y)$ such that $(Y, \Gamma)$ is minimal and $\beta: Y \rightarrow Y$ is a finite order homeomorphism.
 If the homeomorphism group of $M\times Y$ is locally contractible, then there exists $\psi \in \overline{\mathcal{S}(S^1, \beta)}$ such that $\psi$ is homotopic to $\id_M \times \beta$, minimal, and uniquely ergodic.
\end{theorem}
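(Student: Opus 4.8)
The plan is to deduce this from Theorem~\ref{MainTheoremNonManifoldCase} by a soft argument showing that, inside $\overline{\mathcal{S}(S^1,\beta)}$, the free homotopy class of a homeomorphism is a locally constant invariant. Put
\[
\mathcal{H} := \{\, \psi \in \overline{\mathcal{S}(S^1,\beta)} \mid \psi \text{ is homotopic to } \id_M \times \beta \,\}.
\]
Theorem~\ref{MainTheoremNonManifoldCase} supplies a dense $G_\delta$-subset $\mathcal{G}$ of $\overline{\mathcal{S}(S^1,\beta)}$ consisting of uniquely ergodic minimal homeomorphisms. Since a dense set meets every non-empty open set, it suffices to prove that $\mathcal{H}$ is a non-empty open subset of $\overline{\mathcal{S}(S^1,\beta)}$: any $\psi\in\mathcal{G}\cap\mathcal{H}$ then lies in $\overline{\mathcal{S}(S^1,\beta)}$, is minimal, uniquely ergodic, and homotopic to $\id_M\times\beta$, which is exactly what is claimed.

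Non-emptiness is immediate: choosing $h=\id_{M\times Y}\in\mathcal{A}$ and $t$ the identity element of $S^1$ exhibits $\id_M\times\beta = h\circ(R_t\times\beta)\circ h^{-1}\in\mathcal{S}(S^1,\beta)\subseteq\overline{\mathcal{S}(S^1,\beta)}$, and this map is homotopic to itself. For openness, the point is that local contractibility of $\homeo(M\times Y)$ upgrades ``close to $\id_{M\times Y}$'' to ``joined to $\id_{M\times Y}$ by a path in $\homeo(M\times Y)$'': fixing an open neighbourhood $W$ of $\id_{M\times Y}$ whose inclusion into some ambient neighbourhood $V\subseteq\homeo(M\times Y)$ is null-homotopic via $F\colon W\times[0,1]\to V$ with constant value $c=F(\cdot,1)$, then for each $g\in W$ the concatenation of the path $s\mapsto F(g,s)$ with the reverse of $s\mapsto F(\id_{M\times Y},s)$ joins $g$ to $\id_{M\times Y}$ inside $\homeo(M\times Y)$. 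Now let $f\in\mathcal{H}$, pick $\delta>0$ with $\d_{\homeo(M\times Y)}(g,\id_{M\times Y})<\delta \Rightarrow g\in W$, and suppose $f'\in\overline{\mathcal{S}(S^1,\beta)}$ satisfies $\d_{\mathcal{A}}(f,f')<\delta/2$. Performing the substitutions $z=f^{-1}(x)$ and $z=(f')^{-1}(x)$ in the two suprema defining $\d_{\homeo(M\times Y)}$ one computes $\d_{\homeo(M\times Y)}(f'\circ f^{-1},\id_{M\times Y}) = 2\sup_{z}\d_{M\times Y}(f(z),f'(z)) \le 2\,\d_{\mathcal{A}}(f,f') < \delta$, so $f'\circ f^{-1}\in W$. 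Taking a path $s\mapsto g_s$ in $\homeo(M\times Y)$ from $g_0=f'\circ f^{-1}$ to $g_1=\id_{M\times Y}$, the map $(x,s)\mapsto g_s(f(x))$ is a homotopy from $f'$ to $f$ (evaluation $\homeo(M\times Y)\times(M\times Y)\to M\times Y$ is jointly continuous since $\d_{\homeo(M\times Y)}$-convergence implies uniform convergence of the underlying maps). Hence $f'$ is homotopic to $f$, and therefore to $\id_M\times\beta$, i.e. $f'\in\mathcal{H}$; so $\mathcal{H}$ is open, and the proof concludes by taking any $\psi\in\mathcal{G}\cap\mathcal{H}$.

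The step that will need genuine care is the interface between the two metrics: the completeness and Baire machinery, as well as the hypothesis, are phrased in terms of $\d_{\mathcal{A}}$, whereas the homotopy argument takes place in $\homeo(M\times Y)$, so one must verify (as in the estimate above) that $\d_{\mathcal{A}}$-closeness of $f'$ to $f$ forces $f'\circ f^{-1}$ into the contractible neighbourhood $W$; everything else — extracting a path from local contractibility, joint continuity of the evaluation map for the compact metric space $M\times Y$, and the concluding density argument — is routine. It is worth noting that the same reasoning shows $\overline{\mathcal{S}(S^1,\beta)}\setminus\mathcal{H}$ is open as well, so $\mathcal{H}$ is in fact clopen, and that this is precisely the argument referred to (but omitted) in the proof of Theorem~\ref{mainTheoremManifoldCase} for the claim that $\alpha$ can be taken homotopic to $\beta$.
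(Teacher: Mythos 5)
Your proof is correct and follows essentially the same route as the paper: density of the uniquely ergodic minimal elements from Theorem \ref{MainTheoremNonManifoldCase}, the fact that $\id_M\times\beta\in\overline{\mathcal{S}(S^1,\beta)}$, local contractibility of $\homeo(M\times Y)$, and the comparison between $\d_{\mathcal{A}}$ and $\d_{\homeo(M\times Y)}$. The only difference is cosmetic: the paper applies local contractibility directly at the point $\id_M\times\beta$ and extracts a convergent sequence landing in that neighbourhood, whereas you translate to the identity via $f'\circ f^{-1}$ to show the whole homotopy class is open in $\overline{\mathcal{S}(S^1,\beta)}$; the underlying mechanism is identical.
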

\begin{proof}
Since ${\rm Homeo}(M\times Y)$ is locally contractible, there exists an open neighborhood of $\id_M \times \beta$, $U$, such that all homeomorphisms in $U$ are homotopic to the $\id_M\times \beta$. 

By Theorem \ref{MainTheoremNonManifoldCase}, the set
\[ \{ \psi \in  \overline{\mathcal{S}(S^1, \beta)} \mid \psi \hbox{ is minimal and uniquely ergodic} \} 
\]
is dense in $\overline{\mathcal{S}(S^1, \beta)}$. By definition, $\id_M \times \beta \in \overline{\mathcal{S}(S^1, \beta)}$ so there exists a sequence of uniquely ergodic minimal homeomorphisms converging to $\id_M \times \beta$ with respect to the metric $d_{\mathcal{A}}$. In particular, this sequence converges to $\id_M\times \beta$ with respect to $d_{{\rm Homeo}(M\times Y)}$. Hence there exists a uniquely ergodic minimal homeomorphism in $U$. This completes the proof as all elements in $U$ are homotopic to $\id_M \times \beta$.
\end{proof}

\subsection{Constructions from minimal homeomorphisms on point-like spaces} \label{PointLikeSpacesWithBeta}
Let $d\ge 3$ be an odd integer, $Q$ be the Hilbert cube, and $W$ a connected finite \mbox{CW-complex} with finite order homeomorphism $\beta_W: W\rightarrow W$. Then, as observed in Remark \ref{RemMainTheoremNonManifoldCase}, we can apply Theorem \ref{MainTheoremNonManifoldCase} with $M=S^d$, $Y= W \times Q$, and $\beta= \beta_W \times \id_Q$ to obtain a minimal homeomorphism 
\[
\tilde{\beta} : S^d\times W \times Q \rightarrow S^d\times W \times Q, \qquad
(s, w) \mapsto (\varphi(s), h_s (w)),
\]
where $s \in S^d$, $w\in W\times Q$, $\varphi \in \diff^\infty(S^d)$, and $h : S^d \rightarrow {\rm Homeo}(W\times Q)$ continuous. Moreover, $h$ is of the form
\[
h_s =g^{-1}_{\varphi(s)} \circ \beta \circ g_s,
\] 
where $g: S^d \rightarrow {\rm Homeo}(W\times Q)$ is a continuous map. By Theorem \ref{SmoothMap}, the diffeomorphism $\varphi: S^d \rightarrow S^d$ is minimal and therefore, using Theorem~\ref{ThmAboutZ}, there exists a corresponding minimal point-like system $(Z, \zeta)$. 

Let $q : Z \to S^d$ be the factor map of Theorem~\ref{ThmAboutZ}~(\ref{FactorMap}). Then we define a homeomorphism 
\[ \tilde{\zeta}: Z\times W\times Q \rightarrow Z\times W\times Q, \qquad (z, w) \mapsto (\zeta(z), h_{q(z)}(w)).
\]

The next two propositions are analogous to Propositions~\ref{FacMapConTwo} and \ref{MinMapConTwo}, now applied to $h$ as above. The proofs carry through verbatim, so are omitted. 

\begin{proposition} \label{FacMapConTwoInGeneral}
There is a factor map 
\[ \tilde{q} : (Z\times W\times Q, \tilde{\zeta}) \rightarrow (S^d\times W \times Q, \tilde{\beta}) \]
defined by $\tilde{q}= q \times \id_{W\times Q}$.
\end{proposition}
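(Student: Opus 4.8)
The plan is to repeat the argument of Proposition~\ref{FacMapConTwo} verbatim, since the only feature of the map $h$ used there was that it is a continuous map $S^d \to \homeo(W\times Q)$ together with the fact that $\tilde\beta$ is a skew product over $\varphi$. First I would observe that $\tilde q = q \times \id_{W\times Q}$ is surjective: this is immediate because $q : Z \to S^d$ is surjective (it is a factor map by Theorem~\ref{ThmAboutZ}~(\ref{FactorMap})) and $\id_{W\times Q}$ is surjective, so the product of two surjections is surjective. It is also continuous, being a product of continuous maps, and maps the compact space $Z \times W \times Q$ onto $S^d \times W \times Q$.

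Next I would verify the intertwining relation $\tilde q \circ \tilde\zeta = \tilde\beta \circ \tilde q$. Starting from a point $(z,w) \in Z \times W \times Q$, compute
\[
\tilde q \circ \tilde\zeta(z,w) = \tilde q(\zeta(z), h_{q(z)}(w)) = (q(\zeta(z)), h_{q(z)}(w)),
\]
and then use that $q$ is a factor map from $(Z,\zeta)$ to $(S^d,\varphi)$, i.e.\ $q(\zeta(z)) = \varphi(q(z))$, to rewrite this as $(\varphi(q(z)), h_{q(z)}(w))$. On the other hand,
\[
\tilde\beta \circ \tilde q(z,w) = \tilde\beta(q(z), w) = (\varphi(q(z)), h_{q(z)}(w)),
\]
by the definition of $\tilde\beta$. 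The two expressions agree, so $\tilde q$ intertwines $\tilde\zeta$ and $\tilde\beta$.

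Finally, since $\tilde q$ is a continuous surjection that intertwines the two homeomorphisms, it is by definition a factor map $(Z\times W\times Q, \tilde\zeta) \to (S^d\times W\times Q, \tilde\beta)$, which is exactly the claim. I do not anticipate any obstacle here: the proof is essentially identical to that of Proposition~\ref{FacMapConTwo}, the only change being that the fibre homeomorphisms now have the special form $h_s = g^{-1}_{\varphi(s)} \circ \beta \circ g_s$, but this form plays no role in the argument — all that matters is continuity of $s \mapsto h_s$ and the skew-product structure of $\tilde\beta$ over $\varphi$, both of which hold by construction. (For this reason the authors in fact state that the proof "carries through verbatim" and omit it.)
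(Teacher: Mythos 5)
Your proof is correct and is exactly the argument the paper intends: it repeats the computation from Proposition~\ref{FacMapConTwo} (surjectivity of $q \times \id_{W\times Q}$, continuity, and the intertwining identity via $q \circ \zeta = \varphi \circ q$), which is why the authors simply state that the proof carries through verbatim. Your observation that the special form $h_s = g^{-1}_{\varphi(s)} \circ \beta \circ g_s$ plays no role here is also accurate.
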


\begin{proposition} \label{MinMapConTwoInGeneral}
The homeomorphism \[ \tilde{\zeta}: Z\times W\times Q \rightarrow Z\times W\times Q, \qquad (z, w) \mapsto (\zeta(z), h_{q(z)}(w))
\] is minimal. 
\end{proposition}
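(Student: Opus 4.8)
The plan is to mirror the proof of Proposition~\ref{MinMapConTwo}: push a hypothetical invariant set down via the factor map $\tilde{q}$ of Proposition~\ref{FacMapConTwoInGeneral} to the space $S^d \times W \times Q$, where minimality of $\tilde{\beta}$ is already known, and then pull the conclusion back using the almost one-to-one property of the factor map $q \colon Z \to S^d$ from Theorem~\ref{ThmAboutZ}~(\ref{FactorMap}).

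Concretely, first I would let $L_{\infty} \subseteq S^d$ denote the $\varphi$-invariant immersion of $\mathbb{R}$ removed in the construction of $Z$, and fix a non-empty closed $\tilde{\zeta}$-invariant subset $F \subseteq Z \times W \times Q$; the goal is to show $F = Z \times W \times Q$. By Proposition~\ref{FacMapConTwoInGeneral}, $\tilde{q} = q \times \id_{W \times Q}$ is a factor map from $(Z \times W \times Q, \tilde{\zeta})$ onto $(S^d \times W \times Q, \tilde{\beta})$, so $\tilde{q}(F)$ is a non-empty closed $\tilde{\beta}$-invariant subset of $S^d \times W \times Q$. Since $\tilde{\beta}$ is minimal, being the homeomorphism produced by Theorem~\ref{MainTheoremNonManifoldCase}, we conclude $\tilde{q}(F) = S^d \times W \times Q$.

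Next I would use \cite[Lemma 1.14]{DPS:DynZ}, which gives that $q$ is injective when restricted to the copy of $S^d \setminus L_{\infty}$ sitting inside $Z$; it follows immediately that $\tilde{q}$ is injective over $(S^d \setminus L_{\infty}) \times W \times Q$. Combined with $\tilde{q}(F) = S^d \times W \times Q$, this forces $(S^d \setminus L_{\infty}) \times W \times Q \subseteq F$. Finally, $(S^d \setminus L_{\infty}) \times W \times Q$ is dense in $Z \times W \times Q$, by the point-like (inverse limit) construction of $Z$ recorded in Theorem~\ref{ThmAboutZ}, so $F$ is simultaneously closed and dense, whence $F = Z \times W \times Q$. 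This shows $\tilde{\zeta}$ is minimal.

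There is no genuinely hard step left; the only thing requiring care is checking that the hypotheses of the argument above really do transfer to the present $h$, which now has the specific form $h_s = g^{-1}_{\varphi(s)} \circ \beta \circ g_s$. Namely, one needs that $\tilde{q}$ intertwines $\tilde{\zeta}$ with $\tilde{\beta}$ and that $\tilde{\beta}$ is minimal: the first is precisely Proposition~\ref{FacMapConTwoInGeneral}, whose verification is the same computation as in Proposition~\ref{FacMapConTwo}, and the second is Theorem~\ref{MainTheoremNonManifoldCase}, which applies because (via Theorem~\ref{SmoothMap}) $\varphi$ is a minimal diffeomorphism of $S^d$, the very input that makes Theorem~\ref{ThmAboutZ} available. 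Given these, the proof is word for word the one used for Proposition~\ref{MinMapConTwo}.
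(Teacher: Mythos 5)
Your proof is correct and follows exactly the paper's route: the paper itself states that the argument of Proposition~\ref{MinMapConTwo} carries through verbatim (factor map $\tilde{q}$ onto the minimal system $(S^d\times W\times Q,\tilde{\beta})$, injectivity of $q$ on $S^d\setminus L_{\infty}$, density of $(S^d\setminus L_{\infty})\times W\times Q$), and your added remarks correctly identify where minimality of $\tilde{\beta}$ (Theorem~\ref{MainTheoremNonManifoldCase}) and of $\varphi$ (Theorem~\ref{SmoothMap}) enter.
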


\begin{proposition} \label{SameMapOnKtheory}
The map on $K$-theory induced by the minimal homeomorphism $\tilde{\zeta}$ satisfies the following
\begin{displaymath} 
\xymatrix{ K^*(Z\times W\times Q)  \ar[r]^{\tilde{\zeta}^*} &  K^*(Z\times W\times Q)  \\
K^*(W) \ar[u]_{p_W^*} \ar[r]^{\beta^*} & K^*(W)  \ar[u]_{p_W^*}  }
\end{displaymath}
where $p_W: Z\times W \times Q \rightarrow W$ is the projection map and $\beta$ was the original finite order homeomorphism on $W$. Since the vertical map is an isomorphism, we have that $(\tilde{\zeta})^* = \beta^*$. The same result holds for the induced maps on cohomology.
\end{proposition}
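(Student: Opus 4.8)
The plan is to establish the commutative square, i.e.\ that $\tilde{\zeta}^{*}\circ p_{W}^{*}=p_{W}^{*}\circ\beta_{W}^{*}$, where I write $\beta_{W}$ for the given finite order homeomorphism of $W$ (the map called $\beta$ in the statement, so that $\beta=\beta_{W}\times\id_{Q}$ in the notation of this subsection); the asserted equality $\tilde{\zeta}^{*}=\beta_{W}^{*}$ then follows because $p_{W}^{*}$ is an isomorphism. The fact that $p_{W}^{*}\colon K^{*}(W)\to K^{*}(Z\times W\times Q)$ is an isomorphism comes from the K\"unneth formula and Theorem~\ref{ThmAboutZ}\,(\ref{sameCohomAsPoint}): since $\tilde{K}^{*}(Z)=0$ and $Q$ is contractible, exterior multiplication identifies $K^{*}(Z\times W\times Q)$ with $K^{*}(W)$, and tracing through the identification shows it is exactly $p_{W}^{*}$.

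The subtlety---and the reason the argument for the $\beta=\id$ case does not simply carry over---is that $\tilde{\zeta}$ is only \emph{conjugate} (through the skew-product homeomorphism $G$) to $\zeta\times\beta_{W}\times\id_{Q}$, and conjugate self-maps induce conjugate, not equal, maps on $K$-theory; since $\beta_{W}^{*}$ need not be central this conjugation is not invisible. I would get around this by pushing the computation down to the Hilbert cube manifold $S^{d}\times W\times Q$. First, by Proposition~\ref{FacMapConTwoInGeneral} the factor map $\tilde{q}=q\times\id_{W\times Q}$ satisfies $\tilde{q}\circ\tilde{\zeta}=\tilde{\beta}\circ\tilde{q}$, hence $\tilde{\zeta}^{*}\circ\tilde{q}^{*}=\tilde{q}^{*}\circ\tilde{\beta}^{*}$; and since $p_{W}=p_{W}^{S}\circ\tilde{q}$ for the projection $p_{W}^{S}\colon S^{d}\times W\times Q\to W$, we get $p_{W}^{*}=\tilde{q}^{*}\circ(p_{W}^{S})^{*}$. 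Second, exactly as in the proof of the (unlabelled) proposition of Section~\ref{Sec:ExistenceResults} treating $\beta=\id$, the space $S^{d}\times W\times Q$ is a compact Hilbert cube manifold, so $\homeo(S^{d}\times W\times Q)$ is locally contractible by \cite{Cha:HomHibCubMfd}; hence by Theorem~\ref{homotopyLemmaNonMfldCase} we may take $\tilde{\beta}$ to be homotopic to $\id_{S^{d}}\times\beta=\id_{S^{d}}\times\beta_{W}\times\id_{Q}$ (its first coordinate $\varphi$ remains minimal by Theorem~\ref{SmoothMap}, so the point-like system $(Z,\zeta)$ is still available), and homotopy invariance of $K$-theory gives $\tilde{\beta}^{*}=(\id_{S^{d}}\times\beta_{W}\times\id_{Q})^{*}$. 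Combining these with the on-the-nose identity $p_{W}^{S}\circ(\id_{S^{d}}\times\beta_{W}\times\id_{Q})=\beta_{W}\circ p_{W}^{S}$ yields
\[
\tilde{\zeta}^{*}\circ p_{W}^{*}=\tilde{\zeta}^{*}\circ\tilde{q}^{*}\circ(p_{W}^{S})^{*}=\tilde{q}^{*}\circ\tilde{\beta}^{*}\circ(p_{W}^{S})^{*}=\tilde{q}^{*}\circ(p_{W}^{S})^{*}\circ\beta_{W}^{*}=p_{W}^{*}\circ\beta_{W}^{*},
\]
which is the desired square; applying $(p_{W}^{*})^{-1}$ gives $\tilde{\zeta}^{*}=\beta_{W}^{*}$. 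The same chain of equalities proves the statement for \v{C}ech cohomology, and indeed for any continuous generalized cohomology theory: Theorem~\ref{ThmAboutZ}\,(\ref{sameCohomAsPoint}) again forces the relevant $p_{W}^{*}$ to be an isomorphism, and homotopy invariance of the theory supplies the middle equality.

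The main obstacle is precisely the passage just outlined: one cannot argue directly on $Z\times W\times Q$ because $\tilde{\zeta}$ is only conjugate to $\zeta\times\beta_{W}\times\id_{Q}$, so one must route through the factor map and exploit that, downstairs on the Hilbert cube manifold $S^{d}\times W\times Q$, Chapman's local contractibility theorem together with Theorem~\ref{homotopyLemmaNonMfldCase} upgrades ``conjugate to $\id_{S^{d}}\times\beta$'' to ``homotopic to $\id_{S^{d}}\times\beta$'', which is what $K$-theory cannot see. A secondary but essential ingredient is the K\"unneth identification $K^{*}(Z\times W\times Q)\cong K^{*}(W)$ realized by $p_{W}^{*}$, which converts the commuting square into the clean equality $\tilde{\zeta}^{*}=\beta^{*}$.
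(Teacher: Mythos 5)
Your argument is correct and is essentially the paper's own proof: the paper likewise routes through the factor map $\tilde{q}$ of Proposition \ref{FacMapConTwoInGeneral}, uses Theorem \ref{homotopyLemmaNonMfldCase} (via Chapman's local contractibility of $\homeo(S^d\times W\times Q)$) to replace $\tilde{\beta}^*$ by $(\id_{S^d}\times\beta\times\id_Q)^*$, and concludes from $p_W=p\circ q$ together with the fact that $p_W^*$ is an isomorphism. Your chain of equalities is just an unpacked form of the paper's three-row commutative diagram, with the K\"unneth justification of the isomorphism made explicit.
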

\begin{proof}
By Theorem \ref{homotopyLemmaNonMfldCase}, the minimal homeomorphism $\tilde{\beta} : S^d\times W \times Q \rightarrow S^d\times W \times Q$ is homotopic to $\id_{S^d} \times \beta \times \id_Q$. Using this fact, the induced maps on $K$-theory fit into the following commutative diagram:
\begin{displaymath} 
\xymatrix{ 
K^*(Z \times W \times Q) \ar[r]^{\tilde{\zeta}^*} &  K^*(Z\times W\times Q) \\
K^*(S^d\times W\times Q) \ar[u]_{q^*}  \ar[r]^{\tilde{\beta}^*} & K^*(S^d\times W\times Q)  \ar[u]_{q^*}  \\
K^*(W) \ar[u]_{p^*} \ar[r]^{\beta^*} & K^*(W)  \ar[u]_{p^*}  }
\end{displaymath}
where $q$ is the factor map $Z\times W\times Q \rightarrow S^d\times W\times Q$ and $p: S^d \times W \times Q \rightarrow W$ is the projection map. The result follows by noting that $p_W: Z\times W\times Q \rightarrow W$ is equal to $p \circ q$. The proof that the same result holds in cohomology is similar and is therefore omitted.

\end{proof}
We summarize the results of this section in the following theorem.
\begin{theorem} \label{SummaryTheorem}
Suppose that $W$ is a connected finite CW-complex and $\beta: W \rightarrow W$ is a finite order homeomorphism. Then there exist a connected compact metric space $X$ and minimal homeomorphism $\alpha: X\rightarrow X$ such that
\begin{enumerate}
\item $K^*(X) \cong K^*(W)$ and $\alpha^* = \beta^*$ on $K$-theory, 
\item $H^*(X) \cong H^*(W)$ and $\alpha^* = \beta^*$ on \v{C}ech cohomology.
\end{enumerate}
\end{theorem}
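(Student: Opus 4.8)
The plan is to assemble the constructions and propositions of Section~\ref{PointLikeSpacesWithBeta} into a single statement. First I would fix an odd integer $d \ge 3$, endow $S^d$ with a free smooth $S^1$-action (for instance the Hopf-type action $\theta\cdot(z_1,\dots,z_k) = (e^{i\theta}z_1,\dots,e^{i\theta}z_k)$ on $S^{2k-1}\subseteq \C^k$), let $Q$ be the Hilbert cube, and apply Theorem~\ref{MainTheoremNonManifoldCase} with $M = S^d$, $Y = W \times Q$, and $\beta = \beta_W \times \id_Q$. The hypotheses are met via Remark~\ref{RemMainTheoremNonManifoldCase}: since $W$ is a connected finite CW-complex, $W \times Q$ is a compact connected Hilbert cube manifold, so its homeomorphism group is locally contractible and the path component $\Gamma$ of the identity in ${\rm Homeo}(W\times Q)$ acts minimally. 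This produces a minimal, uniquely ergodic homeomorphism $\tilde\beta$ of $S^d \times W \times Q$ of skew-product form $(s,w) \mapsto (\varphi(s), h_s(w))$ with $\varphi \in \diff^\infty(S^d)$, and by Theorem~\ref{homotopyLemmaNonMfldCase} one may arrange $h_s = g^{-1}_{\varphi(s)} \circ \beta \circ g_s$ for a continuous map $g : S^d \to {\rm Homeo}(W \times Q)$, so that $\tilde\beta$ is homotopic to $\id_{S^d} \times \beta_W \times \id_Q$.

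Next I would invoke Theorem~\ref{SmoothMap} to conclude that the base map $\varphi : S^d \to S^d$ is itself minimal, hence by Theorem~\ref{ThmAboutZ} there is a point-like system $(Z, \zeta)$ with an almost one-to-one factor map $q : Z \to S^d$, where $Z$ is compact, connected, and has the $K$-theory and \v{C}ech cohomology of a point. I then set $X := Z \times W \times Q$ and
\[
\alpha := \tilde\zeta : X \to X, \qquad (z, w) \mapsto (\zeta(z), h_{q(z)}(w)).
\]
Then $X$ is a connected compact metric space (a product of such), and Proposition~\ref{MinMapConTwoInGeneral} shows that $\alpha$ is minimal.

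It remains to identify the invariants. Since $Q$ is contractible and $Z$ has trivial reduced $K$-theory and \v{C}ech cohomology, the K\"unneth formula in $K$-theory and the analogous product formula in \v{C}ech cohomology give, via the projection $p_W : X \to W$, isomorphisms $p_W^* : K^*(W) \xrightarrow{\cong} K^*(X)$ and $p_W^* : H^*(W) \xrightarrow{\cong} H^*(X)$. Finally, Proposition~\ref{SameMapOnKtheory} asserts precisely that these isomorphisms intertwine $\alpha^*$ with $\beta^*$ on both $K$-theory and \v{C}ech cohomology; combining this with the previous sentence completes the proof with $X$ and $\alpha = \tilde\zeta$ as above.

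The genuinely hard analytic work is already absorbed into the quoted results: the Anosov--Katok--Fathi--Herman Baire category argument underlies Theorem~\ref{MainTheoremNonManifoldCase}, and the delicate compatibility point---that a \emph{single} isomorphism $K^*(X)\cong K^*(W)$ simultaneously realizes the isomorphism of groups and conjugates $\alpha^*$ to $\beta^*$---is exactly the content of the commuting diagram in Proposition~\ref{SameMapOnKtheory}, which rests on the identity $p_W = p \circ q$ and the homotopy $\tilde\beta \simeq \id_{S^d} \times \beta_W \times \id_Q$. Thus the only care needed in writing this proof is to record which auxiliary data ($d$, the free $S^1$-action on $S^d$, $Q$, $\Gamma$) are chosen and in what order, and to verify that $W \times Q$ satisfies the hypotheses of Theorem~\ref{MainTheoremNonManifoldCase} through Remark~\ref{RemMainTheoremNonManifoldCase}.
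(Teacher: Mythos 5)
Your proposal is correct and follows essentially the same route as the paper: it is exactly the construction of Subsection~\ref{PointLikeSpacesWithBeta} (apply Theorem~\ref{MainTheoremNonManifoldCase} with $M=S^d$, $Y=W\times Q$, $\beta=\beta_W\times\id_Q$, pass to the point-like factor $(Z,\zeta)$ of the minimal base diffeomorphism via Theorems~\ref{SmoothMap} and~\ref{ThmAboutZ}, and set $X=Z\times W\times Q$, $\alpha=\tilde\zeta$), with minimality and the identification of $\alpha^*$ supplied by Propositions~\ref{MinMapConTwoInGeneral} and~\ref{SameMapOnKtheory}, which is precisely how the paper proves Theorem~\ref{SummaryTheorem}. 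The only cosmetic quibble is that the skew-product form $h_s=g^{-1}_{\varphi(s)}\circ\beta\circ g_s$ comes from membership in $\mathcal{S}(S^1,\beta)$ rather than from Theorem~\ref{homotopyLemmaNonMfldCase}, whose role (as in the paper) is to let one choose $\tilde\beta$ homotopic to $\id_{S^d}\times\beta_W\times\id_Q$.
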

\begin{proof}
The relevant space is $X=Z\times W\times Q$ and the relevant homeomorphism is $\alpha=\tilde{\zeta}$. (The definition of $\tilde{\zeta}$ can be found just before the statement of Proposition \ref{FacMapConTwoInGeneral}.) That this homeomorphism is minimal follows from Proposition \ref{MinMapConTwoInGeneral} and that it has the required induced map on $K$-theory and \v{C}ech cohomology follows from Proposition \ref{SameMapOnKtheory}.
\end{proof}
\begin{remark} \label{manifoldCase}
In the previous proof, note that if the connected finite CW-complex $W$ is a closed connected manifold, then we do not need to take the Cartesian product with the Hilbert cube. In this case one can use the space $Z\times W$. 
\end{remark}

\section{Induced automorphism on $K$-theory and cohomology}

\subsection{Examples starting from a manifold}
In what follows $(Z, \zeta)$ is the dynamical system from Theorem~\ref{ThmAboutZ} and we will apply the construction in the previous section to a number of specific examples. 

\begin{example}
Let $S^n$ be the $n$-sphere and consider the space $Z\times S^n$. There are two possible induced maps on the $K$-theory of the $n$-sphere, and both can be realized by finite order homeomorphisms. It follows from Theorem \ref{SummaryTheorem} and Remark \ref{manifoldCase} that both these induced maps can be realized by minimal homeomorphisms on $Z\times S^n$.
\end{example}

\begin{example}
Consider $Z\times \R^n/\Z^n$ and $B$ an $n$ by $n$ matrix with integer entries that satisfies the following conditions:
\[ {\rm det}(B)=\pm 1 \hbox{ and }B^L=I \hbox{ for some }L\ge 1. \] 
We define a map on $\R^n/\Z^n$ via $[v] \mapsto [Bv]$ where $v\in \R^n$ and $[v]$ denotes the associated element in $\R^n/\Z^n$. The condition ${\rm det}(B)=\pm 1$ implies that this map is a homeomorphism (in fact a diffeomorphism) and the condition $B^L=I$ implies that it is finite order. 

Theorem \ref{SummaryTheorem} and Remark \ref{manifoldCase} imply that there is a minimal homeomorphism on $Z \times \R^n/\Z^n$ with induced map on $H^1(Z \times \R^n/\Z^n) \cong \Z^n$ given by $B$.
\end{example}

\begin{example}
This example is based on a question of Nielsen \cite{MR13306}. Suppose that $W$ is a finite CW-complex and $\varphi: W \rightarrow W$ is a homeomorphism such that for some $K\ge 1$, $\varphi^K$ is homotopic to the identity. Then one can ask if there exists a homeomorphism $\beta: W\rightarrow W$ such that $\beta$ has finite order and is homotopic to $\varphi$. In general this is not possible, see \cite[Introduction]{MR2365406}. However for orientable surfaces Nielsen proved that this is always the case \cite{MR13306}. 

In our context, Nielsen's theorem and our results imply the following:  Suppose $N$ is a closed manifold admitting a free $S^1$-action, $M_g$ is an orientable surface of genus $g$ and $\varphi: M_g \rightarrow M_g$ is a homeomorphism such that for some $K\ge 1$, $\varphi^K$ is homotopic to the identity. Then $N \times M_g$ admits a minimal homeomorphism that is homotopic to $\id_N \times \varphi$. To see that this is the case, Nielsen's theorem implies that there is a homeomorphism $\beta : M_g \rightarrow M_g$ that has finite order and is homotopic to $\varphi$. We can then apply Theorem \ref{MainTheoremNonManifoldCase} to $N \times M_g$ with the finite order homeomorphism $\beta$ to obtain a minimal homeomorphism that is homotopic to $\id_N \times \beta$. This completes the proof since $\id_N \times \beta$ is homotopic to $\id_N \times \varphi$. 

Likewise, using Theorem \ref{SummaryTheorem} and Remark \ref{manifoldCase}, $Z\times M_g$ admits a minimal homeomorphism that has action on $K$-theory given by $\varphi^*$, where $(Z, \zeta)$ is the dynamical system from Theorem~\ref{ThmAboutZ}. 
\end{example}

\subsection{Examples starting from finite CW-complexes}

The following question is a natural question about finite CW-complexes but Theorem \ref{MainTheoremNonManifoldCase} and Proposition \ref{SameMapOnKtheory} give it added importance in our context (see Theorem \ref{RealizingBgeneral} below): \vspace{0.2cm} \\
{\bf A realization question in $K$-theory:} Given a finitely generated abelian group $G$ and a finite order automorphism $\sigma: G \rightarrow G$, does there exist a pointed connected finite CW-complex $W$ such that $\tilde{K}^0(W)\cong G$ and $K^1(W)\cong \{ 0\}$ and based point preserving finite order homeomorphism $\beta_W: W \rightarrow W$ such that $\beta_W^*=\sigma$? \vspace{0.2cm} \\
It would be natural to ask that the orders of $\beta_W$ and $\sigma$ are equal, but since our results do not require this, we do not ask this here. The  realization question is related to a classical question of Steenrod, see \cite[Introduction]{MR244215}. Steenrod's question was about singular homology (rather than $K$-theory) and Swan provided the first counterexamples in \cite{MR244215}. 

In this section we discuss some examples where the realization question in $K$-theory has a positive answer. These particular cases are strong enough to give the $C^*$-algebraic applications considered in \cite{DPS:minDSKth}. Moreover, we hope they illustrate some of the issues and proof techniques arising in the study of such problems. The interested reader can see more details on these techniques in the case of Steenrod's question in, for example, Section 2 of \cite{MR431150}. 

\begin{theorem} \label{RealizingBgeneral}
Suppose $G$ is a finitely generated abelian group and $\sigma$ is a finite order automorphism for which the answer to the ``realization question in $K$-theory" is yes. Then there exists a metric space $X$ and minimal homeomorphism $\tilde{\beta}$ such that $\tilde{K}^0(X)\cong G$, $K^1(X) = \{0\}$, and $\tilde{\beta}^*=\sigma$. Moreover, we can take the minimal homeomorphism to be uniquely ergodic. 
\end{theorem}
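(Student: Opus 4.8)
The plan is to feed the complex supplied by the realization hypothesis directly into the construction of Section~\ref{PointLikeSpacesWithBeta}. By assumption there exist a pointed connected finite CW-complex $W$ with $\tilde{K}^0(W)\cong G$ and $K^1(W)=\{0\}$, together with a basepoint-preserving finite order homeomorphism $\beta_W\colon W\to W$ such that $\beta_W^*=\sigma$ on $\tilde{K}^0(W)$. Fix an odd integer $d\ge 3$. As in Remark~\ref{RemMainTheoremNonManifoldCase} and as used for Theorem~\ref{Thm:ExistenceOfMinHomeo}, the product $W\times Q$ is a compact connected Hilbert cube manifold, so taking $\Gamma$ to be the path component of the identity in $\homeo(W\times Q)$ makes $(W\times Q,\Gamma)$ minimal; moreover $S^d\times W\times Q$ is a Hilbert cube manifold, hence has locally contractible homeomorphism group by \cite{Cha:HomHibCubMfd}. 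First I would apply Theorem~\ref{homotopyLemmaNonMfldCase} with $M=S^d$ carrying the standard free $S^1$-action, $Y=W\times Q$, and $\beta=\beta_W\times\id_Q$, obtaining $\tilde{\beta}\in\overline{\mathcal{S}(S^1,\beta)}$ that is minimal, uniquely ergodic, and homotopic to $\id_{S^d}\times\beta_W\times\id_Q$; it has skew-product form over a diffeomorphism $\varphi\in\diff^\infty(S^d)$. By Theorem~\ref{SmoothMap}, $\varphi$ is minimal, so Theorem~\ref{ThmAboutZ} supplies a point-like system $(Z,\zeta)$ with an almost one-to-one factor map $q\colon Z\to S^d$. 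Set $X:=Z\times W\times Q$ and take the homeomorphism of the theorem to be $\tilde{\zeta}\colon X\to X$, $(z,w)\mapsto(\zeta(z),h_{q(z)}(w))$, as defined just before Proposition~\ref{FacMapConTwoInGeneral}.

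Next I would read off the three stated properties. Minimality of $\tilde{\zeta}$ is exactly Proposition~\ref{MinMapConTwoInGeneral}. For the $K$-theory, Proposition~\ref{SameMapOnKtheory} gives an isomorphism $p_W^*\colon K^*(W)\to K^*(X)$, induced by the projection $p_W\colon X\to W$, which intertwines $\beta_W^*$ and $\tilde{\zeta}^*$. Since $W$ is connected, $K^0(W)=\Z\oplus\tilde{K}^0(W)=\Z\oplus G$ and $K^1(W)=\{0\}$; since $X$ is a product of connected spaces it too is connected, so $p_W^*$ (which preserves the rank/augmentation map, as does $\beta_W^*$) carries this splitting to $K^0(X)=\Z\oplus\tilde{K}^0(X)$, identifying $\tilde{K}^0(X)\cong G$, $K^1(X)=\{0\}$, and the restriction of $\tilde{\zeta}^*$ to $\tilde{K}^0(X)$ with $\beta_W^*|_{\tilde{K}^0(W)}=\sigma$. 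This is the asserted conclusion $\tilde{\beta}^*=\sigma$, with the minimal homeomorphism $\tilde{\beta}$ of the statement being our $\tilde{\zeta}$.

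For unique ergodicity I would transport it down the factor map $\tilde{q}=q\times\id_{W\times Q}\colon(X,\tilde{\zeta})\to(S^d\times W\times Q,\tilde{\beta})$ of Proposition~\ref{FacMapConTwoInGeneral}, repeating for the present skew product $\tilde{\beta}$ the argument of the proposition on invariant measures in Section~\ref{Sec:ExistenceResults}. Writing $L_\infty\subset S^d$ for the $\varphi$-invariant immersed line removed in building $Z$, the set $L_\infty\times W\times Q$ is a $\tilde{\beta}$-invariant Borel set, and as in the proof of that proposition (following \cite[Theorem 1.18]{DPS:DynZ}) every $\tilde{\beta}$-invariant Borel probability measure assigns it measure zero. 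Since $q$ is one-to-one on $S^d\setminus L_\infty$, the map $\tilde{q}$ restricts to a bijection on $(S^d\setminus L_\infty)\times W\times Q$, which has full measure for every invariant measure of either system; hence $\tilde{q}$ induces a bijection between the $\tilde{\zeta}$-invariant and $\tilde{\beta}$-invariant Borel probability measures. As $\tilde{\beta}$ is uniquely ergodic, so is $\tilde{\zeta}$, and moreover it can be taken uniquely ergodic, completing the proof.

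I expect the only point needing genuine care to be this last transfer of unique ergodicity: one must confirm that the measure-zero property of $L_\infty\times W\times Q$ established in Section~\ref{Sec:ExistenceResults} depends only on the present system projecting onto $(S^d,\varphi)$ via $\tilde{\beta}(s,w)=(\varphi(s),h_s(w))$ and on $L_\infty$ being $\varphi$-invariant of measure zero, and not on the particular skew product treated there. Everything else is bookkeeping --- unwinding reduced versus unreduced $K$-theory and invoking the already-established Propositions~\ref{MinMapConTwoInGeneral} and~\ref{SameMapOnKtheory}.
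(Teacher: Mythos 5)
Your proposal is correct and takes essentially the same route as the paper: it feeds the $(W,\beta_W)$ supplied by the realization hypothesis into the Subsection~\ref{PointLikeSpacesWithBeta} construction on $X=Z\times W\times Q$ with $\beta=\beta_W\times\id_Q$, and reads off minimality and the action on $K$-theory from Propositions~\ref{MinMapConTwoInGeneral} and~\ref{SameMapOnKtheory}. Your explicit transfer of unique ergodicity from $\tilde{\beta}$ on $S^d\times W\times Q$ to $\tilde{\zeta}$ on $Z\times W\times Q$ via the measure bijection along $\tilde{q}$ is the correct justification of a step the paper leaves implicit when it simply cites Theorem~\ref{MainTheoremNonManifoldCase}.
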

\begin{proof}
Since the realization question has a positive answer we can take $W$ a pointed connected finite CW-complex and $\beta_W: W \rightarrow W$ a based point preserving finite order homeomorphism such that $\beta_W^*=\sigma$. Then we can apply the results in Subsection~\ref{PointLikeSpacesWithBeta} (see in particular Proposition \ref{SameMapOnKtheory}) to $X=Z\times W\times Q$ with the finite order homeomorphism $\beta= \beta_W \times \id_Q$ to get the required minimal homeomorphism $\tilde{\zeta}$, which can be taken to be uniquely ergodic by Theorem \ref{MainTheoremNonManifoldCase}.
\end{proof}

\begin{theorem} \label{InductionProcess}
Suppose that $G_1, \ldots, G_k$ are finitely generated abelian groups with finite order automorphisms $\sigma_i: G_i \rightarrow G_i$, $i=1, \ldots, k$. If the realization question in $K$-theory has a positive answer for each $(G_i, \sigma_i)$, then the realization question in $K$-theory for 
\[ (G_1 \oplus \ldots \oplus G_k, \sigma_1 \oplus \ldots \oplus \sigma_k) \]
also has a positive answer.
\end{theorem}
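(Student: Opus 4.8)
The plan is to reduce immediately to the case $k=2$ and then argue by induction: it suffices to show that if the realization question in $K$-theory has positive answers for $(G_1,\sigma_1)$ and for $(G_2\oplus\cdots\oplus G_k,\ \sigma_2\oplus\cdots\oplus\sigma_k)$, then it has a positive answer for $(G_1\oplus G_2\oplus\cdots\oplus G_k,\ \sigma_1\oplus\cdots\oplus\sigma_k)$. So I will assume we are given pointed connected finite CW-complexes $W_1,W_2$ with $\tilde K^0(W_i)\cong G_i$ and $K^1(W_i)=\{0\}$, together with based-point-preserving finite order homeomorphisms $\beta_i\colon W_i\to W_i$ realizing $\beta_i^*=\sigma_i$ under these identifications.

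The candidate space is the wedge sum $W:=W_1\vee W_2$ formed at the base points. This is again a pointed connected finite CW-complex, and since the base point is a $0$-cell it is well-pointed. Because each $\beta_i$ fixes the base point, the two homeomorphisms glue to a based homeomorphism $\beta:=\beta_1\vee\beta_2\colon W\to W$; its order divides the least common multiple of the orders of $\beta_1$ and $\beta_2$, so $\beta$ has finite order. To compute $K^*(W)$ I will use the standard wedge decomposition of reduced $K$-theory: writing $\iota_i\colon W_i\hookrightarrow W$ for the inclusions and $p_i\colon W\to W_i$ for the maps collapsing the other summand to the base point, one has $p_i\circ\iota_i=\id_{W_i}$ and $p_i\circ\iota_j$ constant for $i\neq j$, and this data exhibits $(\iota_1^*,\iota_2^*)\colon \tilde K^*(W)\to\tilde K^*(W_1)\oplus\tilde K^*(W_2)$ as an isomorphism (surjectivity from the splittings $p_i^*$, injectivity from the wedge axiom / Mayer--Vietoris). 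Hence $\tilde K^0(W)\cong G_1\oplus G_2$ and $K^1(W)=\tilde K^1(W)\cong\tilde K^1(W_1)\oplus\tilde K^1(W_2)=\{0\}$, as needed.

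It remains to identify $\beta^*$. Since $p_i\circ\beta=\beta_i\circ p_i$ and $\beta\circ\iota_i=\iota_i\circ\beta_i$, functoriality shows that under the isomorphism above $\beta^*$ corresponds to $\beta_1^*\oplus\beta_2^*=\sigma_1\oplus\sigma_2$; this finishes the $k=2$ case and hence the induction. I expect the only real point needing care --- the ``main obstacle'' of an otherwise formal argument --- to be the naturality of the wedge splitting with respect to the wedge of the maps $\beta_i$: one must check that the induced automorphism is \emph{literally} the direct sum $\sigma_1\oplus\sigma_2$ on the chosen decomposition, not merely conjugate to it. The fact that $W_1\vee W_2$, rather than some twisted construction, already does the job is what makes the statement go through cleanly.
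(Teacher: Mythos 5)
Your proposal is correct and is essentially the paper's own argument: reduce by induction to $k=2$, form the wedge $(W_1\vee W_2,\ \beta_1\vee\beta_2)$ at the base points, and apply the wedge axiom for reduced $K$-theory to identify both the groups and the induced automorphism. The naturality check you highlight is exactly the ``one can check'' step the paper leaves to the reader, so nothing further is needed.
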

\begin{proof}
Induction reduces the proof to the $k=2$ case. Let $(W_1, \beta_1)$ and $(W_2, \beta_2)$ denote solutions to the realization problem in $K$-theory for $(G_1, \sigma_1)$ and $(G_2, \sigma_2)$, respectively. Since we are working with pointed spaces and based point preserving maps, we can form $(W_1 \vee W_2, \beta_1 \vee \beta_2)$. One can check that $\beta_1 \vee \beta_2$ has finite order. The wedge axiom in reduced $K$-theory implies the result holds.
\end{proof}

\begin{example}
In this example we consider a special case of Lemma \ref{matrixLemmaCW} below. It should aid the reader when considering the more general situation of Lemma \ref{matrixLemmaCW}.
Let
\[
B=\left[ \begin{array}{ccc}0 & 0 & -1 \\ 1 & 0 & -1 \\ 0 & 1 & -1 \end{array}\right].
\]
Note that $B^4=I$ (and $B^n\neq I$ for $1\le n \le 3$), so $B$ gives a $\Z/4\Z$-group action on $\Z^3$. 

Our goal is the construction of a connected finite CW-complex $W$ and a based point preserving homeomorphism $\beta: W\rightarrow W$ such that $\tilde{K}^0(W) \cong \Z^3$, and the action of $\beta$ on reduced $K$-theory is given by $B$. To this end, consider the circle $S^1$ with the trivial $\Z/4\Z$ action and the wedge of four copies of $S^1$, denoted by $S^1 \vee S^1 \vee S^1 \vee S^1$, with the $\Z/4\Z$ action generated by permuting the copies of $S^1$ cyclically. We use the wedge point as the based point and denote an element in $S^1 \vee S^1 \vee S^1 \vee S^1$ by $(x, i)$ where $x\in S^1$ and $i=1,2,3,4$ indicates the circle $x$ is an element in. At the level of the $K^1$-groups we have
\[
K^1(S^1) \cong \Z,
\]  
 with the trivial $\Z/4\Z${-action}, and
\[
K^1(S^1 \vee S^1 \vee S^1 \vee S^1) \cong \Z^4,
\]
 with the $\Z/4\Z$-action generated by 
 \[A=\left[ \begin{array}{cccc}0 & 0 & 0 & 1 \\ 1 & 0 & 0 & 0 \\ 0 & 1 & 0 & 0 \\ 0 & 0 & 1 & 0 \end{array}\right].\]
There is an equivariant map $f: S^1 \vee S^1 \vee S^1 \vee S^1 \rightarrow S^1$ defined by $(x, i) \mapsto x$. Moreover the induced map on the $K_1$-groups 
\[ f^*: K^1(S^1)\cong \Z \rightarrow K^1(S^1 \vee S^1 \vee S^1 \vee S^1) \cong \Z^4 \] 
is given by
\[
[n] \mapsto \left[ \begin{array}{c} n \\ n \\ n \\ n \end{array}\right] .
\]
Let $W=C_f$ be the reduced mapping cone of $f$ (where we note that $C_f$ is a connected finite CW-complex and is pointed because we have taken the reduced mapping cone). Using the fact that $\tilde{K}^0(S^1 \vee S^1 \vee S^1 \vee S^1) \cong \tilde{K}^0(S^1) \cong \{ 0 \}$ and long exact sequence in reduced $K$-theory, we have the following:
\[
0 \rightarrow K^1(C_f) \rightarrow \Z \rightarrow \Z^4 \rightarrow \tilde{K}^0(C_f) \rightarrow 0.
\]
It follows that $\tilde{K}^0(C_f) \cong {\rm coker}{f^*} \cong \Z^3$ and $K^1(C_f) \cong {\rm ker}{f^*}\cong \{ 0\}$ as abelian groups. However, there is more structure involved. Since the mapping cone construction is functorial and $f$ is equivariant with respect to the $\Z/4\Z$-actions, there exists $\beta: C_f \rightarrow C_f$ a based point preserving homeomorphism of order four making the previous exact sequence of abelian groups into an exact sequence of abelian groups with actions of $\Z/4\Z$. 

Now let us show that $\beta^*: \tilde{K}^0(C_f) \rightarrow \tilde{K}^0(C_f)$ is given by $B$. To do so, let $e_1$, $e_2$ and $e_3$ be the images of 
\[
\left[ \begin{array}{c} 1 \\ 0 \\ 0 \\ 0 \end{array}\right], \ \left[ \begin{array}{c} 0 \\ 1 \\ 0 \\ 0 \end{array}\right] \hbox{ and }\left[ \begin{array}{c} 0 \\ 0 \\ 1 \\ 0 \end{array}\right] .
\]
in ${\rm coker}(f^*)$ respectively. This gives an explicit realization of the isomorphism $\tilde{K}^0(C_f)\cong {\rm coker}(f^*)\cong \Z^3$. Moreover, equivariance implies that $\beta^*(e_1)=e_2$ since 
\[
\left[ \begin{array}{cccc}0 & 0 & 0 & 1 \\ 1 & 0 & 0 & 0 \\ 0 & 1 & 0 & 0 \\ 0 & 0 & 1 & 0 \end{array}\right] \left[ \begin{array}{c} 1 \\ 0 \\ 0 \\ 0 \end{array}\right]= \left[ \begin{array}{c} 0 \\ 1 \\ 0 \\ 0 \end{array}\right],
\]
and likewise implies that $\beta^*(e_2)=e_3$. Finally, $\beta^*(e_3)=-e_1-e_2-e_3$ since
\[
\left[ \begin{array}{cccc}0 & 0 & 0 & 1 \\ 1 & 0 & 0 & 0 \\ 0 & 1 & 0 & 0 \\ 0 & 0 & 1 & 0 \end{array}\right] \left[ \begin{array}{c} 0 \\ 0 \\ 1 \\ 0 \end{array}\right]= \left[ \begin{array}{c} 0 \\ 0 \\ 0 \\ 1 \end{array}\right],
\]
and
\[
\left[ \begin{array}{c} 0 \\ 0 \\ 0 \\ 1 \end{array}\right]=\left[ \begin{array}{c} -1 \\ -1 \\ -1 \\ 0 \end{array}\right] ,
\]
as elements in ${\rm coker}(f^*)$.
Thus $\beta^*$ is given by
\[
B=\left[ \begin{array}{ccc}0 & 0 & -1 \\ 1 & 0 & -1 \\ 0 & 1 & -1 \end{array}\right],
\]
as we have computed its action with respect to $e_1$, $e_2$, and $e_3$. This completes the $K$-theory part of the example.

We now consider cohomology. Suppose that $n_0$ is a positive, even integer. Consider the $n_0-1$ dimensional sphere $S^{n_0-1}$ with the trivial $\Z/4\Z$ action and the wedge of four copies of $S^{n_0-1}$, denoted by $S^{n_0-1} \vee S^{n_0-1} \vee S^{n_0-1} \vee S^{n_0-1}$, with the $\Z/4\Z$ action generated by permuting the copies of $S^{n_0-1}$ cyclically. We use the wedge point as the based point and denote an element in $S^{n_0-1} \vee S^{n_0-1} \vee S^{n_0-1} \vee S^{n_0-1}$ by $(x, i)$ where $x\in S^{n_0-1}$ and $i=1,2,3,4$ indicates the circle $x$ is an element in. The reduced cohomology of these spaces vanishes except for the follows:
\[
\tilde{H}^{n_0-1}(S^{n_0-1}) \cong H^{n_0-1}(S^{n_0-1}) \cong K^1(S^{n_0-1}) \cong \Z,
\]  
 with the trivial $\Z/4\Z${-action}, and
\begin{align*}
\tilde{H}^{n_0-1}(S^{n_0-1} \vee S^{n_0-1} \vee S^{n_0-1} \vee S^{n_0-1}) & \cong H^{n_0-1}(S^{n_0-1} \vee S^{n_0-1} \vee S^{n_0-1} \vee S^{n_0-1}) \\
& \cong K^1(S^{n_0-1} \vee S^{n_0-1} \vee S^{n_0-1} \vee S^{n_0-1})  \\
& \cong \Z^4,
\end{align*}
 with the $\Z/4\Z$-action generated by 
 \[A=\left[ \begin{array}{cccc}0 & 0 & 0 & 1 \\ 1 & 0 & 0 & 0 \\ 0 & 1 & 0 & 0 \\ 0 & 0 & 1 & 0 \end{array}\right].\]
Let $W=C_f$ be the reduced mapping cone of $f$ (where we note that $C_f$ is a connected finite CW-complex and is pointed because we have taken the reduced mapping cone). The non-trivial part of the long exact sequence in reduced cohomology has the same form as the one for reduced $K$-theory:
\[
0 \rightarrow H^{n_0-1}(C_f) \rightarrow \Z \rightarrow \Z^4 \rightarrow H^{n_0}(C_f) \rightarrow 0.
\]
where the map $\Z \rightarrow \Z^4$ is given by
\[
[n] \mapsto \left[ \begin{array}{c} n \\ n \\ n \\ n \end{array}\right] .
\]
It follows that $\tilde{K}^0(C_f) \cong H^{n_0}(C_f)\cong \Z^3$. The revelant finite order homeomorphism is the same one discussed above in the case of $K$-theory. We omit the details.
\end{example}

\begin{lemma} \label{matrixLemmaCW}
Consider the $d$ by $d$ matrix
\[
B=\left[ \begin{array}{cccccc}0 & 0 & 0 & \cdots & 0 & -1 \\ 1 & 0 & 0 & \cdots & 0 & -1 \\ 0 & 1 & 0 & \cdots & 0 & -1 \\ 0 & 0 & 1 & \cdots & 0 & -1 \\ \vdots & \vdots & \vdots & \ddots & \vdots & \vdots \\ 0 & 0 & 0 & \cdots & 1 & -1 \end{array}\right]
\]
as an automorphism of $\Z^d$ and let $n_0$ be an even positive integer. Then there exist a connected finite CW-complex $W$ and finite order homeomorphism $\beta$ such that $H^{n_0}(W)\cong \tilde{K}^0(W)\cong \Z^d$, all other non-zero degree cohomology and $K$-theory groups trivial, and such that $\beta^*$ is given by $B$ on both reduced cohomology and reduced $K$-theory.
\end{lemma}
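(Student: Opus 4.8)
The plan is to realize $B$ by a reduced mapping cone, mirroring the Example that precedes the lemma (which is the case $d=3$, with $n_{0}$ playing the role it has there). I would begin with the linear algebra. Note that $B$ is the companion matrix of $1+z+\cdots+z^{d}=(z^{d+1}-1)/(z-1)$, so $B^{d+1}=I$ and $B$ is a finite-order automorphism of $\Z^{d}$ (in particular $\det B=(-1)^{d}=\pm1$). More precisely, let $A$ be the $(d+1)\times(d+1)$ cyclic permutation matrix sending the $i$-th standard basis vector of $\Z^{d+1}$ to the $(i+1)$-st (indices mod $d+1$), and let $\bar e_{1},\dots,\bar e_{d}$ be the images in $\Z^{d+1}/\langle(1,\dots,1)\rangle$ of the first $d$ standard basis vectors. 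Then $\Z^{d+1}/\langle(1,\dots,1)\rangle\cong\Z^{d}$ with this basis, and the automorphism it carries from $A$ is exactly $B$, since $\bar e_{i}\mapsto\bar e_{i+1}$ for $i<d$ while $\bar e_{d}\mapsto\bar e_{d+1}=-\bar e_{1}-\cdots-\bar e_{d}$. This is the $K$-theory computation of the Example, done for a general number of wedge summands.

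For the topology, since $n_{0}$ is even, $n_{0}-1$ is odd, so $\tilde K^{0}(S^{n_{0}-1})=0$, $K^{1}(S^{n_{0}-1})\cong\Z$, and $\tilde H^{j}(S^{n_{0}-1})$ vanishes except for $\tilde H^{n_{0}-1}(S^{n_{0}-1})\cong\Z$. Let $\rho$ be the based homeomorphism of $\bigvee_{i=1}^{d+1}S^{n_{0}-1}$ that cyclically permutes the wedge summands; it has order $d+1$ and generates a $\Z/(d+1)\Z$-action, while the target $S^{n_{0}-1}$ is given the trivial action. The fold map $f:\bigvee_{i=1}^{d+1}S^{n_{0}-1}\to S^{n_{0}-1}$, $(x,i)\mapsto x$, is then equivariant, and its restriction to each wedge summand is the identity; hence, under the wedge splitting, the homomorphism induced by $f$ on $K^{1}$ (respectively on $\tilde H^{n_{0}-1}$) is the diagonal map $\Z\to\Z^{d+1}$. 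I would set $W:=C_{f}$, the reduced mapping cone, which is a pointed connected finite CW-complex (with one $0$-cell, one $(n_{0}-1)$-cell, and $d+1$ cells of dimension $n_{0}$).

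Next I would read off the invariants of $W$ from the cofibre sequence $\bigvee_{i=1}^{d+1}S^{n_{0}-1}\xrightarrow{f}S^{n_{0}-1}\to W$. In reduced $K$-theory the six-term exact sequence collapses to $0\to K^{1}(W)\to\Z\xrightarrow{f^{*}}\Z^{d+1}\to\tilde K^{0}(W)\to0$; since $f^{*}$ is the injective diagonal map, $K^{1}(W)=0$ and $\tilde K^{0}(W)\cong{\rm coker}(f^{*})\cong\Z^{d}$. In reduced ordinary cohomology, which agrees with \v{C}ech cohomology on the finite CW-complex $W$, the long exact sequence of the cofibre sequence is supported in degrees $n_{0}-1$ and $n_{0}$ and reads $0\to\tilde H^{n_{0}-1}(W)\to\Z\xrightarrow{f^{*}}\Z^{d+1}\to\tilde H^{n_{0}}(W)\to0$, giving $\tilde H^{n_{0}-1}(W)=0$, $\tilde H^{n_{0}}(W)\cong{\rm coker}(f^{*})\cong\Z^{d}$, and all other reduced cohomology zero (while $H^{0}(W)=\Z$).

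Finally, functoriality of the reduced mapping cone applied to the equivariant pair $(\rho,\id_{S^{n_{0}-1}})$ produces a based homeomorphism $\beta:=C_{f}(\rho):W\to W$ with $\beta^{d+1}=C_{f}(\rho^{d+1})=\id_{W}$, so $\beta$ has finite order. Naturality of the two long exact sequences identifies $\beta^{*}$ on $\tilde K^{0}(W)\cong{\rm coker}(f^{*})$ (and likewise on $\tilde H^{n_{0}}(W)\cong{\rm coker}(f^{*})$) with the endomorphism of $\Z^{d+1}$ induced by $\rho$, namely the cyclic permutation matrix $A$; by the first paragraph this descends to $B$ on $\Z^{d}$, once the generator of $\Z/(d+1)\Z$ is chosen so as to obtain $B$ rather than $B^{-1}$. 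This is precisely the computation of the preceding Example, carried out for general $d$. The main thing requiring care is the bookkeeping of bases and orientation signs, so that the map induced on the cokernel is literally the stated matrix $B$; this is routine given the worked Example as a template, and I do not anticipate a genuine obstacle.
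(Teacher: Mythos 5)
Your proposal is correct and is essentially the paper's own proof: the paper simply says the lemma follows from the preceding Example with general $d$ in place of $3$, taking $W$ to be the reduced mapping cone of the fold map $\bigvee_{i=1}^{d+1}S^{n_0-1}\to S^{n_0-1}$ with the cyclic $\Z/(d+1)\Z$-action, exactly as you do. Your identification of $B$ with the induced automorphism of $\Z^{d+1}/\langle(1,\dots,1)\rangle$ and your handling of the generator choice (to get $B$ rather than $B^{-1}$) are just the bookkeeping the paper leaves implicit.
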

The proof of the previous lemma is the same as the construction in the previous example, but with general $d \in \mathbb{N} \setminus \{0\}$ not necessarily equal to $3$. The details are therefore omitted. However, it is worth mentioning that the space $W$ is the reduced mapping cone associated to the map $f: S^{n_0-1} \vee S^{n_0-1} \vee \ldots  \vee S^{n_0-1} \rightarrow S^{n_0-1}$ defined by $(x, i) \mapsto x$.

\begin{theorem} \label{RealizingB}
Consider the $d$ by $d$ matrix
\[
B=\left[ \begin{array}{cccccc}0 & 0 & 0 & \cdots & 0 & -1 \\ 1 & 0 & 0 & \cdots & 0 & -1 \\ 0 & 1 & 0 & \cdots & 0 & -1 \\ 0 & 0 & 1 & \cdots & 0 & -1 \\ \vdots & \vdots & \vdots & \ddots & \vdots & \vdots \\ 0 & 0 & 0 & \cdots & 1 & -1 \end{array}\right]
\]
as an automorphism of $\Z^d$ and let $n_0$ be an even positive integer. Then there exists a connected metric space $X$ and minimal homeomorphism $\tilde{\beta}$ such that $H^{n_0}(X)\cong \tilde{K}^0(X)\cong \Z^d$ with the other reduced cohomology and reduced $K$-theory groups trivial such that $\tilde{\beta}^*$ is given by $B$ on both reduced cohomology and reduced $K$-theory.
\end{theorem}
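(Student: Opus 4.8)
The plan is to feed the topological pair $(W,\beta_W)$ produced by Lemma~\ref{matrixLemmaCW} directly into the dynamical machinery of Subsection~\ref{PointLikeSpacesWithBeta}; concretely, to apply Theorem~\ref{SummaryTheorem} to this pair.

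First I would invoke Lemma~\ref{matrixLemmaCW} with the given dimension $d$ and the even positive integer $n_0$ to obtain a connected finite CW-complex $W$ (explicitly, the reduced mapping cone of the fold-type map $f\colon S^{n_0-1}\vee\cdots\vee S^{n_0-1}\to S^{n_0-1}$) together with a finite order homeomorphism $\beta_W\colon W\to W$ for which $H^{n_0}(W)\cong\tilde K^0(W)\cong\Z^d$, all other reduced \v{C}ech cohomology and reduced $K$-theory groups vanish, and $\beta_W^*$ equals $B$ on both reduced cohomology and reduced $K$-theory. Next I would apply Theorem~\ref{SummaryTheorem} to $(W,\beta_W)$: this produces a connected compact metric space $X$ --- namely $X=Z\times W\times Q$, where $Q$ is the Hilbert cube and $(Z,\zeta)$ is a point-like system of Theorem~\ref{ThmAboutZ} attached to a minimal diffeomorphism of some odd-dimensional sphere of dimension at least $3$ --- and a minimal homeomorphism $\tilde\beta:=\tilde\zeta\colon X\to X$ with $K^*(X)\cong K^*(W)$, $\tilde\beta^*=\beta_W^*$ on $K$-theory, and likewise $H^*(X)\cong H^*(W)$, $\tilde\beta^*=\beta_W^*$ on \v{C}ech cohomology. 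Substituting the explicit groups from Lemma~\ref{matrixLemmaCW} then yields $H^{n_0}(X)\cong\tilde K^0(X)\cong\Z^d$ with all remaining reduced groups trivial, completing the argument.

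Essentially all of the content of the theorem is already contained in Lemma~\ref{matrixLemmaCW} and in Theorem~\ref{MainTheoremNonManifoldCase} (which underlies Subsection~\ref{PointLikeSpacesWithBeta}), so there is no genuinely new obstacle; the one point deserving a sentence of care is that the assertion ``$\tilde\beta^*$ is given by $B$'' is an equality of matrices under a \emph{particular} identification, not merely an equality up to conjugacy. This is exactly what Proposition~\ref{SameMapOnKtheory} delivers: it provides a commuting square in which the map $p_W^*$ induced by the projection $X\to W$ is an isomorphism intertwining $\tilde\beta^*$ with $\beta_W^*$, so transporting the preferred generators $e_1,\dots,e_d$ of $\tilde K^0(W)$ --- and the analogous generators of $H^{n_0}(W)$ --- through $p_W^*$ fixes identifications $\tilde K^0(X)\cong\Z^d\cong H^{n_0}(X)$ for which $\tilde\beta^*$ is literally $B$. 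If one wants it, $\tilde\beta$ may in addition be taken uniquely ergodic, since the dense $G_\delta$ produced by Theorem~\ref{MainTheoremNonManifoldCase} consists of uniquely ergodic minimal homeomorphisms.
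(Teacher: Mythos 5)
Your proposal is correct and follows essentially the same route as the paper: construct $(W,\beta_W)$ via Lemma~\ref{matrixLemmaCW} and feed it into the point-like machinery of Subsection~\ref{PointLikeSpacesWithBeta} applied to $Z\times W\times Q$, with Proposition~\ref{SameMapOnKtheory} giving the identification under which $\tilde\beta^*=B$. The only cosmetic difference is that the paper routes the $K$-theory statement through Theorem~\ref{RealizingBgeneral} and handles cohomology by remarking that $K$-theory and cohomology of this particular $W$ agree, whereas you invoke Theorem~\ref{SummaryTheorem} directly, which covers both at once; these rest on the same underlying results.
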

\begin{proof}
Lemma \ref{matrixLemmaCW} implies that the answer to the realization question in $K$-theory is ``yes" in this special case. The statement about $K$-theory then follows from Theorem \ref{RealizingBgeneral} and the statement about cohomology can be obtained from the fact that the $K$-theory and cohomology of the CW-complex constructed in this situation are isomorphic; the details are omitted. 
\end{proof}

An important consequence of the previous theorem is that we have many minimal homeomorphisms that are not homotopic to the identity. In addition, the minimal homeomorphisms constructed in the previous theorem play an important role in the $C^*$-algebraic applications considered in \cite{DPS:minDSKth}. In particular, the following result will be used in \cite{DPS:minDSKth}. The condition that $H^1(X)$ is trivial has been included so that the crossed products in \cite{DPS:minDSKth} have no non-trivial projections, see \cite[Corollary 4.6]{DPS:minDSKth}.

\begin{theorem}Suppose that $G_0$ and $G_1$ are finitely generated abelian groups and the realization problem in $K$-theory for $\sigma_0: G_0 \rightarrow G_0$ and $\sigma_1: G_1 \rightarrow G_1$ each have a positive answer. Then there exist a uniquely ergodic minimal homeomorphism on a compact metric space X such that $H^1(X)$ is trivial, $K^0(X)\cong \Z \oplus G_0, K^1(X) \cong G_1$ and the induced maps on $K$-theory are $\id_{\Z} \oplus \sigma_0$ and $\sigma_1$.
\end{theorem}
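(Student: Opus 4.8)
The plan is to reduce the statement to constructing a single finite CW-complex carrying the prescribed $K$-theory, trivial $H^1$, and a finite order homeomorphism inducing $\sigma_0 \oplus \sigma_1$, and then to run the construction of Subsection~\ref{PointLikeSpacesWithBeta}. The first task is thus to produce a connected finite CW-complex $W$ together with a finite order homeomorphism $\beta \colon W \to W$ such that $\tilde K^0(W) \cong G_0$, $\tilde K^1(W) \cong G_1$, $H^1(W) = 0$, and $\beta^*$ acts as $\sigma_0$ on $\tilde K^0(W)$ and as $\sigma_1$ on $\tilde K^1(W)$. Granted such a pair $(W, \beta)$, I would apply the construction of Subsection~\ref{PointLikeSpacesWithBeta} exactly as in the proof of Theorem~\ref{RealizingBgeneral}: choosing the minimal diffeomorphism $\varphi$ of $S^d$ supplied by Theorem~\ref{MainTheoremNonManifoldCase} to be uniquely ergodic, one obtains a point-like system $(Z, \zeta)$ from Theorem~\ref{ThmAboutZ} and a uniquely ergodic minimal homeomorphism $\tilde\zeta$ on $X := Z \times W \times Q$ with $\tilde\zeta^* = \beta^*$ on $K^*(X)$ and on $H^*(X)$ (Propositions~\ref{MinMapConTwoInGeneral} and \ref{SameMapOnKtheory}). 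By the K\"unneth formula together with Theorem~\ref{ThmAboutZ}~(\ref{sameCohomAsPoint}), $K^*(X) \cong K^*(W)$ and $H^*(X) \cong H^*(W)$; hence $H^1(X) \cong H^1(W) = 0$, $K^0(X) \cong \Z \oplus \tilde K^0(W) \cong \Z \oplus G_0$ and $K^1(X) \cong \tilde K^1(W) \cong G_1$. Since $X$ is connected, $\beta^*$ fixes the copy of $\Z$ in $K^0$, so the maps induced by $\tilde\zeta$ are $\id_\Z \oplus \sigma_0$ on $K^0(X)$ and $\sigma_1$ on $K^1(X)$, as desired.

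It remains to build $(W, \beta)$. The hypothesis supplies pointed connected finite CW-complexes $W_0, W_1$ and based point preserving finite order homeomorphisms $\beta_i \colon W_i \to W_i$ with $\tilde K^0(W_i) \cong G_i$, $K^1(W_i) = 0$, and $\beta_i^* = \sigma_i$ for $i = 0, 1$. I would pass to the twofold reduced suspension $\Sigma^2 W_0$ and the reduced suspension $\Sigma W_1$, carrying the maps along to $\Sigma^2\beta_0$ and $\Sigma\beta_1$. Since the reduced suspension is a functor on pointed spaces, these are again based point preserving homeomorphisms of finite order. Since the suspension isomorphism and Bott periodicity in topological $K$-theory are natural, $\Sigma^2 W_0$ still has $\tilde K^0 \cong G_0$ and $\tilde K^1 = 0$, with $(\Sigma^2\beta_0)^*$ corresponding to $\sigma_0$, while $\Sigma W_1$ now has $\tilde K^0 = 0$ and $\tilde K^1 \cong G_1$, with $(\Sigma\beta_1)^*$ corresponding to $\sigma_1$. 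Moreover $\Sigma^2 W_0$ is $2$-connected and $\Sigma W_1$ is simply connected, so both have vanishing $H^1$. I then set $W := \Sigma^2 W_0 \vee \Sigma W_1$, with the wedge point as base point, and $\beta := (\Sigma^2\beta_0) \vee (\Sigma\beta_1)$; this $\beta$ is a homeomorphism of finite order (the least common multiple of the orders of $\beta_0$ and $\beta_1$). Applying the wedge axioms in reduced $K$-theory and reduced \v{C}ech cohomology naturally to $\beta$ gives all the required properties of $(W, \beta)$.

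The step I expect to be the main obstacle is securing $H^1(X) = 0$ — equivalently $H^1(W) = 0$ — without disturbing the $K$-theoretic data: a bare positive answer to the realization question in $K$-theory controls only $\tilde K^0$ and $K^1$ of the realizing complex and says nothing about odd-degree cohomology, and it also places $G_1$ in the wrong $K$-theoretic degree. The (double) suspension trick addresses both points at once, and the only things that need care there are the naturality of the $K$-theory suspension and Bott isomorphisms (so that $\Sigma^2\beta_0$ and $\Sigma\beta_1$ still induce $\sigma_0$ and $\sigma_1$) and the elementary fact that the reduced suspension of a finite order pointed homeomorphism again has finite order. Everything downstream — minimality, unique ergodicity, and the computations of $K^*$, $H^*$ and of the induced maps — then follows verbatim from Subsection~\ref{PointLikeSpacesWithBeta} and the argument of Theorem~\ref{RealizingBgeneral}.
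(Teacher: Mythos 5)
Your proposal is correct and follows essentially the same route as the paper: realize $(G_0,\sigma_0)$ and $(G_1,\sigma_1)$ by pointed finite CW-complexes with finite order homeomorphisms, use reduced suspension to shift $G_1$ into $K^1$ and Bott periodicity to kill $H^1$ without changing the $K$-theory, wedge the pieces, and feed the result into the construction of Subsection~\ref{PointLikeSpacesWithBeta} as in Theorem~\ref{RealizingBgeneral}. The only (immaterial) difference is that you wedge $\Sigma^2 W_0$ with $\Sigma W_1$, whereas the paper forms $W_0 \vee \Sigma W_1$ first and then applies $\Sigma^2$ to the whole wedge.
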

\begin{proof}
Let $W_0$ be the finite CW-complex and $\beta_0: W_0\rightarrow W_0$ be the finite order homeomorphism that solve the realization problem in $K$-theory for $\sigma_0$, and likewise let $W_1$ be the finite CW-complex and $\beta_1: W_1\rightarrow W_1$ the finite order homeomorphism that solve the realization problem in $K$-theory for $\sigma_1$. 

Take $W=W_0 \vee \Sigma W_1$ where $\Sigma W_1$ is the reduced suspension of $W_1$, and let $\beta_W = \beta_0 \vee \Sigma \beta_1$. We note that $\beta_W$ is well defined because the spaces and maps considered are pointed and the wedge product is taken at this point. Since $\beta_0$ and $\beta_1$ have finite order, $\beta_W$ also has finite order.

The space $W$ satisfies
\[
K^0(W) \cong \Z \oplus G_0 \hbox{ and } K^1(W) \cong G_1,
\]
and the induced map on $K$-theory from $\beta_W$ is
\[
\beta_W^*= \id_{\Z} \oplus \sigma_0 : \Z \oplus G_0 \rightarrow \Z \oplus G_0 \hbox{ and } \beta^*=\sigma_1 : G_1 \rightarrow G_1.
\]
Finally consider $\Sigma^2 W$ and $\Sigma^2 \beta_W$ (that is, take the reduced suspension twice). Bott periodicity implies that $\Sigma^2 W$ and $\Sigma^2 \beta$ have the same $K$-theory properties as $W$ and $\beta_W$. Moreover, for $k\ge 1$, 
\[ H^k(\Sigma^2 W) \cong H^{k-2}(W). \]
Hence $H^1(\Sigma^2 W) \cong H^{-1}(W)$ is trivial. We can apply the results in Subsection~\ref{PointLikeSpacesWithBeta} (see in particular Proposition \ref{SameMapOnKtheory}) to $X=Z\times \Sigma^2 W\times Q$ with the finite order homeomorphism $\beta=\id_Z \times \Sigma^2 \beta_W \times \id_Q$ to get the required minimal homeomorphism. By Theorem \ref{MainTheoremNonManifoldCase} we may take the homeomorphism to be uniquely ergodic. Finally, we note that $H^1(X) \cong H^1(\Sigma^2 W)$ is trivial, as required.
\end{proof}

\end{document}